\documentclass[11pt, leqno, twoside]{article}

\usepackage{amssymb}
\usepackage{amsmath}
\usepackage{amsthm}
\usepackage{amsfonts}
\usepackage{mathrsfs}
\usepackage{indentfirst}
\usepackage{graphicx}
\usepackage{txfonts}

\usepackage{anysize}

\usepackage{fancyhdr}

\usepackage[T1]{fontenc}
\usepackage[utf8]{inputenc}

\usepackage{color}
\usepackage{setspace}

\usepackage{hyperref}
\hypersetup{colorlinks=true, linkcolor=blue, citecolor=blue}

\allowdisplaybreaks

\allowdisplaybreaks

\usepackage{txfonts}

\def\rr{{\mathbb R}}
\def\rn{{{\rr}^n}}

\def\zz{{\mathbb Z}}

\def\nn{{\mathbb N}}

\def\cm{{\mathcal M}}

\def\ca{{\mathcal A}}

\def\ch{{\mathcal H}}

\def\fz{\infty}

\def\dz{\delta}

\def\lz{\lambda}

\def\lf{\left}
\def\r{\right}

\def\hs{\hspace{0.26cm}}

\def\ls{\lesssim}
\def\gs{\gtrsim}

\def\noz{\nonumber}
\def\wz{\widetilde}

\def\rh{{\mathcal {RH}}}

\def\loc{{\mathop\mathrm{\,loc\,}}}

\def\q1{\wz q}
\def\Q1{q_1}

\def\loc{{\mathop\mathrm{loc\,}}}

\newtheorem{thm}{Theorem}[section]
\newtheorem{prop}[thm]{Proposition}
\newtheorem{lem}[thm]{Lemma}
\newtheorem{cor}[thm]{Corollary}

\theoremstyle{definition}
\newtheorem{defn}[thm]{Definition}
\newtheorem{rem}[thm]{Remark}

\numberwithin{equation}{section}

\begin{document}
\title{\bf\Large Reverse H\"older Class and Two-Parameter Good-$\lambda$ Inequality over Hausdorff Content
\footnotetext{\hspace{-0.35cm}
2020 {\it Mathematics Subject Classification}.
{Primary 42B25; Secondary 28A12, 28A25.} \endgraf
{\it Key words and phrases.} Reverse H\"older class, Good-$\lambda$ inequality, Hausdorff content, capacity Muckenhoupt weight, Choquet integral. \endgraf}}
\author{Long Huang, Yangzhi Zhang\footnote{Corresponding author /{\color{red}{October 12, 2026}}}~~and Ciqiang Zhuo
}
\date{ }
\maketitle
	
\vspace{-0.7cm}

\begin{center}
\begin{minipage}{13cm}
{\small {\bf Abstract}\quad

Classical harmonic analysis is built upon Lebesgue (or more general) measure. Such a framework is remarkably successful for sets of full ambient dimension, but becomes intrinsically insensitive to lower-dimensional geometry: curves, surfaces, and more generally fractal subsets of \(\mathbb R^n\) with zero \(n\)-dimensional Lebesgue measure. In contrast,
Hausdorff content provides a dimension-sensitive notion of size for lower-dimensional sets or fractal sets, but harmonic analysis in this setting is still largely limited to maximal operators. In this paper, we establish a weighted two-parameter good-\(\lambda\) inequality under Hausdorff content setting, providing a route toward other general operators from maximal operators over Hausdorff content.
To do this, we firstly identify an appropriate  reverse H\"older class $\mathcal R\mathcal H_{q,\delta}$ associated with Hausdorff content, and then prove the self-improving property of $\mathcal R\mathcal H_{q,\delta}$ by establishing the fundamental equivalence between reverse H\"older weights and capacitary Muckenhoupt weights.
As direct applications, we obtain weighted Fefferman--Stein inequality and some boundedness on general operators over Hausdorff content.
The methodology developed in this paper, even when reduced to the Lebesgue measure case, also provide novel insights, which shows that the good-\(\lambda\) principle is not intrinsically tied to an underlying measure, but can extended to a non-additive framework.

}
\end{minipage}
\end{center}


\vspace{0.2cm}

\section{Introduction}

Given a set $E\subset \rn$ and $\dz \in (0,n]$, the $\delta$-\emph{Hausdorff content}  $\mathcal{H}_{\infty }^{\delta } (E )$ is defined by setting
\begin{align*}
\mathcal{H}_{\infty }^{\delta }\lf (E \r )
:=\inf\lf \{\sum_i [l(Q_i)]^{\delta }:\
E\subset \bigcup_i Q_i \right \},
\end{align*}
where the infimum is taken over all finite or countable cubes coverings $\{Q_i\}_{i}$ of $E$. Here and thereafter, a cube $Q\subset \mathbb{R}^n$ always denotes a left-open and right-closed cube in $\mathbb{R}^n$ with sides parallel to coordinate axes, and $l(Q)$ denotes the edge length of the cube $Q$.
The \emph{Choquet integral}  of a function $f$ with respect to Hausdorff content $\mathcal{H}_{\infty }^{\delta }$ is defined to be
\[
\int_\rn |f(x)|\,d\mathcal{H}_{\infty }^{\delta }:=\int_0^\fz \mathcal{H}_{\infty }^{\delta }(\{x\in\rn:\ |f(x)|>t\})\,dt.
\]
Based on this, for any $p\in(0,\fz)$, the \emph{$p$-Choquet integral} with respect to Hausdorff content $\mathcal{H}_{\infty }^{\delta }$ of a function $f$ on $E$ is defined by setting
\begin{align*}
\|f\|_{L^p(E,\mathcal{H}_{\infty }^{\delta })}
:=\lf[\int_E |f(x)|^p\,d\mathcal{H}_{\infty }^{\delta }\r]^\frac1p.
\end{align*}
In what follows, for any $p\in(0,\fz)$ and $E\subset \rn$, we use $L^p(E,\mathcal{H}_{\infty }^{\delta })$ to denote the space of all functions $f$ on $E$ such that the quasi-norm $\|f\|_{L^p(E,\mathcal{H}_{\infty }^{\delta })}<\fz$, and use $L^{\fz}(E,\mathcal{H}_{\infty }^{\delta })$ to denote the space of all functions $f$ on $E$ such that the quasi-norm
\[
\|f\|_{L^\infty(E,\mathcal{H}_{\infty }^{\delta })}:=\inf_{}\lf\{t>0:\mathcal{H}_{\infty }^{\delta }(\{x\in E:\ |f(x)|>t\})=0\r\}<\fz.
\]
The Choquet integral, introduced in Choquet's foundational work \cite{ch55},
has wide applications in decision theory, risk measures, and other fields; see, for example, \cite{bbc11,dhp11}. The quantity $\mathcal{H}_{\infty }^{\delta }\lf (E \r )$, also referred to as the \emph{Hausdorff content of $E$ of dimension} $\delta$, comes from fractal geometry (see \cite{eg15,fe69}) and naturally arises in nonlinear PDE because they capture the dimension-sensitive size of exceptional and singular sets that cannot be detected by Lebesgue measure (see \cite{l02}). We refer the reader to the excellent survey of Adams \cite{0923-2} for an overview on the theory of Choquet integrals over Hausdorff content.

Classical harmonic analysis is built upon Lebesgue measure. Such a framework is remarkably successful for sets of full ambient dimension, but becomes intrinsically insensitive to lower-dimensional geometry: curves, surfaces, and more generally fractal subsets of \(\mathbb R^n\), which may have zero \(n\)-dimensional Lebesgue measure and are therefore indistinguishable from the viewpoint of Lebesgue measure. In contrast, the \(\delta\)-Hausdorff content provides a finer measuring device that detects lower-dimensional and fractal structures and also retains the Hausdorff dimension through its critical exponent. This makes Hausdorff content a natural setting for developing harmonic analysis on sets that are essentially invisible to Lebesgue measure, and for formulating estimates that are sensitive not only to the ambient dimension \(n\) but also to an intrinsic dimensional parameter \(\delta\). At the same time, this passage is far from a routine replacement of \(dx\) by \(d\mathcal H^\delta_\infty\): Hausdorff content is non-additive and the associated Choquet integral is nonlinear, so fundamental tools based on countable additivity or linearity of integration are no longer available; see, for example, Adams \cite{Ad88,0923-2}.

At present, harmonic analysis in the Hausdorff content setting is still centered largely around the Hardy--Littlewood maximal operator; see, for instance, Adams \cite{0923-2}, Orobitg and Verdera \cite{0923-7}, and Chen et al. \cite{0923-3}. 
This program has developed substantially. At the same time, the weighted theory adapted to Hausdorff content has only recently begun to emerge, including Muckenhoupt weight classes $\mathcal{A}_{p,\delta}$ with Hausdorff content and reverse H\"older inequality, self-improving property, factorization theorem on $\mathcal{A}_{p,\delta}$,  as well as weighted norm inequalities for Hardy--Littlewood maximal operators \cite{HZZ25}.

By contrast, a systematic theory for cancellation-type singular integrals of Calder\'on--Zygmund type in the Hausdorff content setting, especially in the weighted setting, remains largely undeveloped.
A fundamental difficulty is that
when one attempts to move from maximal operators to singular integral operators with cancellation, in the classical theory, sparse domination has become one of the principal routes: a singular operator is first reduced to a positive sparse averaging operator, whose boundedness then yields the desired estimate. Under Hausdorff content setting, however, this second step may break down. Indeed, Hatano, Kawasumi, Saito and Tanaka \cite{hkst} showed that, for \(0<\delta<n\), the sparse operator may be unbounded on \(L^p(E,\mathcal{H}_{\infty }^{\delta })\) for every \(p>0\). Consequently, even if a singular integral admits a sparse domination, taking the Choquet norm of the sparse majorant does not in general yield a boundedness result.
This makes it necessary to develop a different route.

In classical theory, the good-\(\lambda\) inequality method provides a fundamentally different route: it converts localized control of an operator directly into distributional estimates, without requiring boundedness of a sparse dominating operator.
Thus, the aim of this paper is to establish the method of good-\(\lambda\) inequality under Hausdorff content setting.

Recall that good-$\lambda$ inequalities, introduced by Burkholder and Gundy in \cite{BDL-70}, form a very efficient and powerful tool to establish $L^p$-boundedness for various classes of operators. Let $p\in(0,\fz)$ and $w\in A_{\fz}$. For two non-negative functions $F$ and $G$, in order to prove the weighted norm inequality
\begin{align}\label{9e11}
\|F\|_{L^p(\rn,\,dw)}\le C \|G\|_{L^p(\rn,\,dw)},
\end{align}
a typical good-$\lambda$ inequality method yields that it is enough to show that for any $\alpha\in (0,1)$, there exist constants $\gamma=\gamma(\alpha)\in(0,\fz)$, $c\in(0, 1]$, and $C\in(0,\fz)$ such that for any $\lambda\in(0,\fz)$,
\[
w\Bigl(\Bigl\{x\in\rn: F(x)>2\lambda,\ G(x)\leq \gamma\lambda\Bigr\}\Bigr)\leq C\alpha^{c}w\Bigl(\Bigl\{x\in\rn: F(x)>\lambda\Bigr\}\Bigr).
\]
Furthermore, by the above good-$\lambda$ inequality, the size of $F$ is controlled by that of $G$, and hence the same inequality as \eqref{9e11}
holds with $L^{p,\fz}$ in place of $L^p$ or with other function spaces.

This technique was further developed by Coifman \cite{Coi} and Coifman--Fefferman \cite{CoFe74} to establish weighted norm inequalities between singular integral operators $T$ and Hardy--Littlewood maximal operator $M$ by setting $F:=Tf$ and $G:=Mf$
through comparing the distribution functions of the two operators. This becomes a standard tool for studying singular integral operators in classical harmonic analysis. Indeed, good-$\lambda$ inequalities also constitute a powerful tool used in probability theory and noncommutative analysis \cite{jow}.

Recently, due to the un applicable of the above good-$\lambda$ inequality for operators whose boundedness is restricted to $p\in(p_0,q_0)$, a finite range of exponents (\cite{PA07, LAC98, ZS04, ZS05}), such as the Riesz transform for the Laplace--Beltrami operator on some Riemanian manifolds \cite{PA05, PA04}, the following two-parameter good-$\lambda$ inequality was used
\begin{align}\label{eq1116-x}
\lf|\lf\{x\in\rn: M F(x)>K\lambda,\ G(x)\leq \gamma\lambda\r\}\r|\leq C\lf(\frac{1}{K^q_0}+\frac{\gamma}{K}\r)\lf|\lf\{x\in\rn: M F(x)>\lambda\r\}\r|,
\end{align}
for all $\lambda>0$, $K\ge K_0$ and $0<\gamma<1$. Subsequently, Auscher and Martell \cite{PAJMM07} established a weighted version of \eqref{eq1116-x}, namely, for $q\in(0,\fz]$, $s\in[1,\fz)$, and $w\in RH_{s'}$, under some assumptions on $F$, $G$, $H_1$, and $H_2$, there exist $C>0$ and $K_0\ge1$, such that for all $\lambda>0$, $K\ge K_0$ and $0<\gamma<1$,
\begin{align*}
&w\lf(\lf\{x\in\rn: M F(x)>K\lambda, G(x)+H_2(x)\leq \gamma\lambda\r\}\r)\\
&\quad\leq C\lf(\frac{a^q}{K^q}+\frac{\gamma}{K}\r)^{\frac1s}w\lf(\lf\{x\in\rn: M F(x)+M H_1(x)>\lambda\r\}\r).
\end{align*}
Here, $RH_q$ denotes the reverse H\"older class, introduced by Gehring in \cite{Gr73} while studying the regularity theory of quasiconformal mappings, consisting of weight functions satisfying
\[
\left( \frac{1}{|Q|} \int_Q w(x)^q\, dx \right)^{1/q} \leq C \frac{1}{|Q|} \int_Q w(x)\,dx.
\]
Gehring  \cite{Gr73} proved that $RH_q$ class possesses the self-improving property if $w \in RH_q$, then there exists $\varepsilon > 0$ such that $w \in RH_{q+\varepsilon}$. This profound property later became a key tool in proving the good-$\lambda$ inequality and studying the regularity of solutions to elliptic partial differential equations.

The main result of this paper is to obtain a Hausdorff content analogue of the above two-parameter good-$\lambda$ inequality of Auscher and Martell. Thus, we firstly invent the appropriate shape of the reverse H\"older weight class $\rh_{q,\dz}$ over Hausdorff content in Section \ref{rh01}, and prove this class also enjoying self-improving property; see Theorem \ref{lm-0921-1}.

Below is our first principal result.

\begin{thm}\label{them0923-1}
Let $\dz\in(0,n]$, $a\in [1, \fz)$, $q\in(1, \fz]$, and $w\in\rh_{s', \dz}$ with $s\in [1,\fz)$.
Let $F,\ G,\ H_1,$ and $H_2$ be non-negative functions on $\rn$ such that for any cube $Q\subset\rn$ there exist non-negative functions $G_Q$ and $H_Q$ satisfying
\begin{align}\label{eq923-1}
F(x)\leq G_Q(x)+ H_Q(x),\quad\ \text{for}\ \  \ch_\fz^\dz\text{--a. e.}\ x\in Q,
\end{align}
\begin{align}\label{eq923-2}
\biggl[\frac1{\ch_{\fz}^{\dz}(Q)}\int_{Q}H_Q(y)^q\,d\ch_{\fz}^{\dz}\biggr]^{\frac1q}
\leq a\Bigl[\cm_{\ch_{\fz}^{\dz}}F(x)+\cm_{\ch_{\fz}^{\dz}}H_1(x)+H_2(\bar{x})\Bigr],\quad \forall x,\bar{x}\in Q,
\end{align}
and
\begin{align}\label{eq923-4}
\frac1{\ch_{\fz}^{\dz}(Q)}\int_{Q}G_Q(y)\,d\ch_{\fz}^{\dz}\leq G(x),\quad\ \forall x\in Q.
\end{align}
Then there exist positive constants $C$ and $K_0> a$ such that for all $\lambda>0$, $K\ge K_0$ and $\gamma\in(0,1)$,
\begin{align}\label{eq923-5}
&w_{\ch_{\fz}^{\dz}}\lf(\lf\{x\in\rn: \cm_{\ch_{\fz}^{\dz}}F(x)>K\lambda, \, G(x)+H_2(x)\leq \gamma\lambda\r\}\r)\\
&\quad\leq C\lf(\frac{a^q}{K^q}+\frac{\gamma}{K}\r)^{\frac{1}{s}}w_{\ch_{\fz}^{\dz}}\lf(\lf\{x\in\rn: \cm_{\ch_{\fz}^{\dz}}F(x)+\cm_{\ch_{\fz}^{\dz}}H_1(x)>\lambda\r\}\r).\noz
\end{align}
Consequently, for all $p\in (0, \frac{q}{s})$, there exists a positive constant $C$ such that
\begin{align}\label{eq926-1}
\|\cm_{\ch_{\fz}^{\dz}}F\|_{L_{w}^{p}(\rn, \ch_{\fz}^{\dz})}
\leq C\Bigl(
\|G\|_{L_{w}^{p}(\rn, \ch_{\fz}^{\dz})}
+\|\cm_{\ch_{\fz}^{\dz}}H_1\|_{L_{w}^{p}(\rn, \ch_{\fz}^{\dz})}
+\|H_2\|_{L_{w}^{p}(\rn, \ch_{\fz}^{\dz})}
\Bigr),
\end{align}
provided $\|\cm_{\ch_{\fz}^{\dz}}F\|_{L_{w}^{p}(\rn, \ch_{\fz}^{\dz})}<\fz$,
and
\begin{align}\label{eq926-2}
\|\cm_{\ch_{\fz}^{\dz}}F\|_{L_{w}^{p, \fz}(\rn, \ch_{\fz}^{\dz})}
\leq C\Bigl(
\|G\|_{L_{w}^{p, \fz}(\rn, \ch_{\fz}^{\dz})}
+\|\cm_{\ch_{\fz}^{\dz}}H_1\|_{L_{w}^{p, \fz}(\rn, \ch_{\fz}^{\dz})}
+\|H_2\|_{L_{w}^{p, \fz}(\rn, \ch_{\fz}^{\dz})}
\Bigr),
\end{align}
provided $\|\cm_{\ch_{\fz}^{\dz}}F\|_{L_{w}^{p,\fz}(\rn, \ch_{\fz}^{\dz})}<\fz$.
Furthermore, if $p\ge 1$, then \eqref{eq926-1} and \eqref{eq926-2} hold
under the assumption that $F\in L^{1}(\rn, \ch_{\fz}^{\dz})$.
\end{thm}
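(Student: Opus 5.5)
The plan follows the Auscher--Martell scheme, with Lebesgue measure replaced by $\ch_\fz^\dz$ and the $A_\fz$-type comparison replaced by the reverse H\"older property of $w\in\rh_{s',\dz}$.

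\textbf{Step 1: Whitney-type decomposition of the level set.} Set $E_\lambda:=\{x:\ \cm_{\ch_\fz^\dz}F(x)+\cm_{\ch_\fz^\dz}H_1(x)>\lambda\}$. Using the dyadic (or finitely many shifted dyadic grids) version of $\cm_{\ch_\fz^\dz}$, we may assume $E_\lambda$ is open and proper. We cover it by maximal dyadic cubes $\{Q_j\}$ such that a fixed dilate $\widetilde Q_j$ meets $E_\lambda^\complement$. If $E_\lambda=\rn$ or some cube is infinite, we argue with truncations, which is where $\|\cm_{\ch_\fz^\dz}F\|<\fz$ or $F\in L^1(\rn,\ch_\fz^\dz)$ enters through the weak-type bound for $\cm_{\ch_\fz^\dz}$. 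Since $\ch_\fz^\dz$ is only countably subadditive, it suffices to prove the estimate
\[
w_{\ch_\fz^\dz}(\{x\in Q_j:\ \cm F>K\lambda,\ G+H_2\le\gamma\lambda\})\ls\Bigl(\tfrac{a^q}{K^q}+\tfrac{\gamma}{K}\Bigr)^{1/s}w_{\ch_\fz^\dz}(Q_j)
\]
for each $j$. We then sum, using the packing property of the $\ch_\fz^\dz$-content of dyadic cubes: for disjoint dyadic cubes one has $\sum_j w_{\ch_\fz^\dz}(Q_j)\ls w_{\ch_\fz^\dz}(\bigcup_j Q_j)$, via the dyadic content equivalence and the properties of $w$ established in Section~\ref{rh01}.

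\textbf{Step 2: Local estimate in unweighted content.} Fix $Q_j$ with some $x_j$ such that $G(x_j)+H_2(x_j)\le\gamma\lambda$; if there is no such point, the left side is empty. Pick $y_j\in\widetilde Q_j\setminus E_\lambda$. For $x\in Q_j$ with $\cm F(x)>K\lambda$ and $K\ge K_0$ large, the localization principle (averages over cubes not contained in $\widetilde Q_j$ are controlled by $\cm F(y_j)\le\lambda$) gives $\cm(F\chi_{\widetilde Q_j})(x)>K\lambda$. Applying \eqref{eq923-1} on $\widetilde Q_j$ splits this into $\cm(G_{\widetilde Q_j}\chi)>K\lambda/2$ or $\cm(H_{\widetilde Q_j}\chi)>K\lambda/2$. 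We estimate the first set by the weak-$(1,1)$ bound for $\cm_{\ch_\fz^\dz}$ together with \eqref{eq923-4} at $x_j$, which yields $\ls \frac{\gamma}{K}\ch_\fz^\dz(Q_j)$. We estimate the second by weak-$(q,q)$ and \eqref{eq923-2}, evaluated at $x=y_j$ and $\bar x=x_j$, which yields $\ls\frac{a^q(1+\gamma)^q}{K^q}\ch_\fz^\dz(Q_j)$. This is where a quasi-additivity of $\ch_\fz^\dz$ restricted to cubes (the dyadic content) and the doubling $\ch_\fz^\dz(\widetilde Q)\approx\ch_\fz^\dz(Q)$ are used.

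\textbf{Step 3: Passing to the weight.} For $E\subset Q$, H\"older's inequality for Choquet integrals together with $w\in\rh_{s',\dz}$ gives
\[
w_{\ch_\fz^\dz}(E)\ls\Bigl[\tfrac{\ch_\fz^\dz(E)}{\ch_\fz^\dz(Q)}\Bigr]^{1/s}w_{\ch_\fz^\dz}(Q).
\]
This step needs care because Choquet H\"older carries constants and is only quasi-subadditive, and the self-improvement in Theorem~\ref{lm-0921-1} may be invoked if an endpoint is problematic. Combining this with Step 2 gives \eqref{eq923-5}.

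\textbf{Step 4: Norm inequalities and the main obstacle.} Write $\|\cm F\|_{L^p_w}^p$ via the layer-cake formula $p\int_0^\fz \lambda^{p-1} w_{\ch_\fz^\dz}(\{\cm F>\lambda\})\,d\lambda$, which is valid for Choquet integrals. Split by \eqref{eq923-5}:
\[
w(\{\cm F>K\lambda\})\le C\Bigl(\tfrac{a^q}{K^q}+\tfrac{\gamma}{K}\Bigr)^{1/s}w(\{\cm F>\lambda/2\})+w(\{\cm H_1>\lambda/2\})+w(\{G+H_2>\gamma\lambda\}),
\]
where quasi-subadditivity of $w_{\ch_\fz^\dz}$ is used. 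Since $p<q/s$, we can choose $K$ large and then $\gamma$ small so that $CK^p2^p(\frac{a^q}{K^q}+\frac{\gamma}{K})^{1/s}<1/2$, and absorb, using finiteness of the left side. The weak-type estimate \eqref{eq926-2} is obtained identically with suprema in place of integrals. For $p\ge1$ and $F\in L^1(\rn,\ch_\fz^\dz)$, we run the argument with $F$ replaced by truncations $\min(F,N)\chi_{B(0,N)}$, or use the weak-type bound to get finiteness for the truncated quantity, and then let $N\to\fz$ by monotone convergence for Choquet integrals.

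I expect the main obstacle to be the summation in Step 1, together with the dual use of \eqref{eq923-2} at two points in Step 2. Non-additivity of $\ch_\fz^\dz$ means that $\sum_j w_{\ch_\fz^\dz}(Q_j)$ is not automatically controlled by $w_{\ch_\fz^\dz}(E_\lambda)$. My first attempt would be to use the dyadic Hausdorff content, which satisfies a packing or strong-subadditivity property for disjoint dyadic cubes. I would pass to it at the cost of constants, possibly with $w$ replaced by its dyadic counterpart, whose comparability is ensured by the capacitary Muckenhoupt equivalence.
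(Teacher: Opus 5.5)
\textbf{The summation step fails.} Your local estimate (Steps 2--3) and the absorption argument in Step 4 match the paper. The proof breaks where you sum over the Whitney cubes. For disjoint dyadic cubes it is \emph{not} true that $\sum_j w_{\ch_\fz^\dz}(Q_j)\ls w_{\ch_\fz^\dz}(\bigcup_j Q_j)$ when $\dz<n$. Take $w\equiv1$, which lies in every $\rh_{q,\dz}$. The $2^{kn}$ dyadic subcubes of side $2^{-k}$ of a unit cube have total content $2^{k(n-\dz)}$, while their union has content $1$. The dyadic content $\widetilde\ch_\fz^\dz$ has exactly the same defect, so strong subadditivity does not rescue you. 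For the Whitney cubes of a ball one even has $\sum_j\ch_\fz^\dz(Q_j)=\fz$ as soon as $\dz\le n-1$, so proving the estimate on each Whitney cube and summing gives nothing.

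\textbf{What is missing.} You need the selection Lemma \ref{lm925-3}, which applies because $w\in\ca_{\fz,\dz}$ by Theorem \ref{prop-0916-1}. It gives:
\begin{itemize}
\item a subfamily $\{Q_{j_v}\}$ that does satisfy $\sum_v w_{\ch_\fz^\dz}(Q_{j_v})\ls w_{\ch_\fz^\dz}(\bigcup_v Q_{j_v})$;
\item larger non-overlapping dyadic cubes $Q_k^*$ covering the remaining Whitney cubes, with $w_{\ch_\fz^\dz}(Q_k^*)\le\sum_{Q_{j_v}\subset Q_k^*}w_{\ch_\fz^\dz}(Q_{j_v})$.
\end{itemize}
This forces you to prove the local estimate not only for the $Q_j$ but for \emph{every} dyadic cube $P$ containing some $Q_j$; this is the paper's claim \eqref{eq925-3}. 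Such a $P$ need not lie in $E_\lambda$. However, $4P\supset 4Q_j$ still meets $E_\lambda^{c}$, so your localization and splitting argument goes through with $P$ in place of $Q_j$. Writing $B_\lambda:=\{\cm_{\ch_\fz^\dz}F>K\lambda,\ G+H_2\le\gamma\lambda\}$, one then bounds $w_{\ch_\fz^\dz}(B_\lambda)\le\sum_v w_{\ch_\fz^\dz}(B_\lambda\cap Q_{j_v})+\sum_k w_{\ch_\fz^\dz}(B_\lambda\cap Q_k^*)$. This is $\ls(\frac{a^q}{K^q}+\frac{\gamma}{K})^{1/s}\sum_v w_{\ch_\fz^\dz}(Q_{j_v})\ls(\frac{a^q}{K^q}+\frac{\gamma}{K})^{1/s}w_{\ch_\fz^\dz}(E_\lambda)$.

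\textbf{The case $p\ge1$, $F\in L^1(\rn,\ch_\fz^\dz)$ is also not handled.} Replacing $F$ by $\min(F,N)\mathbf{1}_{B(0,N)}$ does not preserve \eqref{eq923-2}. The right-hand side there would have to involve $\cm_{\ch_\fz^\dz}$ of the truncated function, which is smaller. The paper keeps $F$ and truncates the $\lambda$-range instead:
\begin{itemize}
\item First assume $w\in L^\fz(\rn,\ch_\fz^\dz)$. Then $w_{\ch_\fz^\dz}\le\|w\|_{L^\fz}\ch_\fz^\dz$, so the unweighted weak-$(1,1)$ bound makes $\int_{\lambda_1}^{\lambda_2}\lambda^{p-1}w_{\ch_\fz^\dz}(\{\cm_{\ch_\fz^\dz}F>\lambda\})\,d\lambda$ finite.
\item Absorb on $[K^{-n},K^{m}]$. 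The leftover boundary piece near $K^{-n}$ is bounded independently of $n$ precisely because $p\ge1$.
\item Finally remove the boundedness of $w$ by passing to $\min\{w,N\}$. This stays in $\rh_{s',\dz}$ with constant independent of $N$ (Lemma \ref{lm-930-1}).
\end{itemize}

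\textbf{A smaller point.} The layer-cake formula computes $\int|f|^p\,dw_{\ch_\fz^\dz}$, not $\|f\|^p_{L^p_w(\rn,\ch_\fz^\dz)}=\int|f|^pw\,d\ch_\fz^\dz$. Passing between the two uses the equivalence in Remark \ref{8rem29}. That equivalence holds for $w\in\rh_{s',\dz}$ but fails for general weights.
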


We now give several remarks on Theorem \ref{them0923-1}.

\begin{rem}\label{rem318-1}
\begin{enumerate}
\item[{\rm(i)}] Given $s\in[1,\fz)$, we always use the standard notation $s':=\frac s{s-1}$ and a property
is said to hold \emph{$\mathcal{H}^\delta_\infty$--almost everywhere}, shortly \emph{$\mathcal{H}^\delta_\infty$--a.\,e.}, if it holds everywhere except on a set $E$ with $\mathcal{H}^\delta_\infty(E)=0$. The symbol $w_{\mathcal{H}_\infty^\delta}$ is defined as in \eqref{eq1111-3} and
$\cm_{\ch_{\fz}^{\dz}}$ denotes
uncentered Hardy--Littlewood maximal operator over Hausdorff content; see Section \ref{s11.11-1}. Weighted function spaces appeared in \eqref{eq926-1} and \eqref{eq926-2} are introduced in Section \ref{s11.11-6}.

\item[{\rm(ii)}]
When $q=\fz$, the $L^q(Q, \ch_{\fz}^{\dz})$-average appearing in \eqref{eq923-2} stand for the essential supremum $\|H_Q\|_{L^{\fz}(Q, \ch_{\fz}^{\dz})}$ and $\frac{a^q}{K^q}$ in \eqref{eq923-5} is understood as zero since $\frac a K<1$. Thus, the $L_{w}^{p}(\rn, \ch_{\fz}^{\dz})$ and $L_{w}^{p,\fz}(\rn, \ch_{\fz}^{\dz})$ estimates in \eqref{eq926-1} and \eqref{eq926-2} hold for any $p\in(0,\fz)$, no matter the value of $s$. Since $w\in\rh_{1,\dz}$ if and only if $w\in\ca_{\fz,\dz}$ by Theorem \ref{prop-0916-1} below, then, in the case $q=\fz$, Theorem \ref{them0923-1} holds for any $w\in\ca_{\fz, \dz}$.

\item[{\rm(iii)}] If \eqref{eq923-2} holds for any $q\in(1,\fz)$, then \eqref{eq926-1} and \eqref{eq926-2} hold for any $p\in(0,\fz)$ and any $w\in\ca_{\fz, \dz}$. Indeed, given $p\in(0,\fz)$
$w\in\ca_{\fz, \dz}$, by Theorem \ref{prop-0916-1}, there exists some $s\in [1,\fz)$ such that $w\in\rh_{s',\dz}$ and we may choose $q$ sufficiently large such that $p\in (0,\frac{q}{s})$.

\item[{\rm(iv)}] The self-improving property of $\rh_{s,\dz}$ in Theorem \ref{lm-0921-1} infers that
if $s\in (1,\fz)$ and $q\in(1,\fz)$, then \eqref{eq926-1} and \eqref{eq926-2} also hold for the endpoint $p=\frac{q}{s}$. To see this, notice that when $w\in\rh_{s',\dz}$, from Theorem \ref{lm-0921-1}, there exists some $s_0\in(1,s)$ such that $w\in\rh_{s_0',\dz}$, and hence, applying  Theorem \ref{them0923-1}, \eqref{eq926-1} and \eqref{eq926-2} hold for any $p\in(0,\frac{q}{s_0})$. Since $p=\frac{q}{s}\in (0,\frac{q}{s_0})$, the validity of \eqref{eq926-1} and \eqref{eq926-2} for this $p$ is immediate.

\item[{\rm(v)}] In the proof of Theorem \ref{them0923-1}, we indeed prove a stronger estimate than \eqref{eq923-5} as below
\begin{align*}
&w_{\ch_{\fz}^{\dz}}\lf(\lf\{x\in\rn: \cm_{\ch_{\fz}^{\dz}}F(x)>K\lambda, \, G(x)+H_2(x)\leq \gamma\lambda\r\}\r)\\
&\quad\leq C\lf(\frac{a^q}{K^q}+\frac{\gamma}{K}\r)^{\frac{1}{s}}w_{\ch_{\fz}^{\dz}}\lf(\lf\{x\in\rn: \cm_{\ch_{\fz}^{\dz}}F(x)>\lambda\r\}\cup\lf\{x\in\rn: \cm_{\ch_{\fz}^{\dz}}H_1(x)>\lambda\r\}\r).
\end{align*}

\item[{\rm(vi)}] When $\delta=n$, this result restricted on measurable sets and functions precisely revists the result by Auscher and Martell in \cite[Theorem 3.1]{PAJMM07}. Thus, the main contribution of Theorem \ref{them0923-1} is the two-parameter good-$\lambda$ inequality over Hausdorff content for $\delta\in(0,n)$,
which provides a flexible tool for deriving weighted Choquet integral estimates and for developing operator theory relative to lower-dimensional sets, thereby opening a route toward harmonic analysis adapted to lower-dimensional geometric size beyond maximal operators.

\item[{\rm(vii)}] Our proof strategy follows the ideas in \cite{PA07,PAJMM07}. However, the current framework introduces new challenges. For example,
the lack of countable additivity of Hausdorff content, and linearity and Fubini's theorem for Choquet integrals prevents a direct use of many standard measure-theoretic arguments. The methodology, even when reduced to the Lebesgue measure case, also provide novel insights, which shows that the good-\(\lambda\) principle, one of the central approaches in harmonic analysis, is not intrinsically tied to an underlying measure, but can survive in a non-additive framework governed by geometric size.
\end{enumerate}
\end{rem}

The proof of Theorem \ref{them0923-1} relies on the following three key ingredients, which also constitute the main difficulties. First, a direct Hausdorff content analogue of the classical reverse H\"older class of Gehring \cite{Gr73} is insufficient, since the Choquet integral is nonlinear and Hausdorff content is non-additive. We therefore introduce a suitable class of reverse H\"older weights \(\rh_{q,\dz}\) over Hausdorff content in Section \ref{rh01}. This definition is intrinsic in the present setting, as it is in fact equivalent to the corresponding reverse H\"older inequality in the Hausdorff content framework; see Remark \ref{09re13}. In particular, it guarantees \eqref{eq0902-3} and \eqref{eq0902-2}, which make the subsequent analysis possible. Second, many classical arguments for reverse H\"older class rely essentially on linearity of integration and hence break down in the present setting. To overcome this obstruction, we establish an equivalence between reverse H\"older weights \(\rh_{1,\dz}\)  and capacitary Muckenhoupt weights \(\ca_{\fz,\dz}\); see Theorem \ref{prop-0916-1} and Lemma \ref{lm0922-1}. This reduces the problem to the theory of capacitary Muckenhoupt weights, where many useful tools developed in \cite{HZZ25} become available.
Third, the self-improving property of classical \(RH_q\) weights is a crucial ingredient for proving good-\(\lambda\) inequalities. Its counterpart over Hausdorff content is, however, substantially more delicate, as it requires establishing an appropriate reverse H\"older inequality over weighted capacitary \(w_{\ch_{\fz}^{\dz}}\).


The organization of this paper is as follows. Section \ref{s2} reviews some necessary basic concepts, including  capacity, Choquet integrals, and capacitary Muckenhoupt $\ca_{p,\dz}$ weights as well as some required properties. Section \ref{rh01} introduces capacitary reverse H\"older class $\rh_{q, \dz}$ and
establishes fundamental relationships between capacitary Muckenhoupt $\ca_{p,\dz}$ weights and capacitary reverse H\"older class $\rh_{q, \dz}$. The reverse H\"older inequality over $w_{\ch_{\fz}^{\dz}}$ and self-improving propery of $\rh_{q, \dz}$ is also established in this section. Section \ref{s4} proves Theorem \ref{them0923-1}. Section \ref{s5} deduces some applications of Theorem \ref{them0923-1}, such as weighted Fefferman--Stein inequality and some singular non-integral operators over Hausdorff content.

Throughout the paper, let $\delta\in(0,n]$. The notation $f\ls g$ (resp. $f\gs g$) means $f\le Kg$ (resp. $f\ge Kg$)
for a positive constant $K$ independent of the main parameters, and
$f\sim g$ amounts to $f\ls g\ls f$. We also use the symbol $K(\alpha,\,\beta,\dots)$ to denote a positive constant
which depends on the parameters $\alpha,\,\beta,\dots$ but may vary line to line.
Also, we denote by $\mathbf{1}_E$ the characteristic function of set $E\subset \rn$.
For any $x\in\rn$ and $r\in(0,\fz)$, $Q(x,r)$ stands for the cube centered at $x$ with side length $r$. Given a cube $Q\subset \rn$ and $\lambda\in(0,\fz)$, the new cube $\lambda Q$ always denotes the cube that shares the same center as $Q$
and whose side length is scaled by a factor of $\lambda$.

\section{Choquet Integral, Capacitary Maximal Operator, and $\ca_{p,\dz}$ Weight}\label{s2}

\subsection{Capacity and Choquet Integrals}\label{s11.11}
A set function $C$ is called a \emph{capacity} if it is a non-negative, monotone and countably sub-additive set function,
i.e., $C$ satisfies:
\begin{enumerate}
\item[(i)] $C(\emptyset)=0$ and, for any set $E\subset \mathbb R^n$, $C(E)\ge0$;
\item[(ii)] If $E\subset F$, then $C(E)\le C(F)$;
\item[(iii)] If $E=\bigcup_{i=1}^{\fz}E_i$, then
$C(E)\le \sum_{i=1}^{\fz}C(E_i)$.
\end{enumerate}

Let $p\in(0, \fz]$, $E\subset \rn$ and $C$ be a capacity. For any $C$-almost everywhere defined function $f$ on $E$, its \emph{Choquet integral} with respect to capacity $C$ is defined to be
\[
\int_\rn |f(x)|\,dC:=\int_0^\fz C(\{x\in\rn:\ |f(x)|>t\})\,dt.
\]
Based on this, for any $p\in(0,\fz)$, the \emph{$p$-Choquet integral} with respect to capacity $C$ of a function $f$ on $E$ is defined by setting
\begin{align*}
\|f\|_{L^p(E,C)}
:=\lf[\int_E |f(x)|^p\,dC\r]^\frac1p.
\end{align*}
In what follows, for any $p\in(0,\fz)$ and $E\subset \rn$, we use $L^p(E,C)$ to denote the space of all functions $f$ on $E$ such that the quasi-norm $\|f\|_{L^p(E,C)}<\fz$, and use $L^{\fz}(E,C)$ to denote the space of all functions $f$ on $E$ such that the quasi-norm
\[
\|f\|_{L^\infty(E,C)}:=\inf_{t>0}\lf\{t:C(\{x\in E:\ |f(x)|>t\})=0\r\}<\fz.
\]

Below we give several needed remarks on capacity, which will be frequently used in this paper.
\begin{rem}\label{rem-11112-1}
Let $C$ be a capacity.
\begin{enumerate}
\item[(i)]
Denote by $L_{\loc}^1(\rn,C)$ the set of all functions satisfying that, for any compact set $E\subset\rn$, $f\in L^1(E,C)$. It is straightforward to observe that, for any sequence $\{E_j\}_{j\in\nn}$ of subsets in $\rn$,
\begin{align}\label{eq1111-5}
\int_{\bigcup_{j\in\nn}E_j}|f(x)|\,dC\le \sum_{j\in\nn}\int_{E_j}|f(x)|\,dC.
\end{align}
Consequently, if $f\in L_{\loc}^1(\rn,C)$ is non-negative, then the set function $f_C$ is also a capacity,
where $f_C$ is defined by setting, for any subset $E\subset \rn$, 
\begin{align}\label{eq1111-6}
f_C(E):=\int_{E}f(x)\,dC.
\end{align}
Clearly, for any $\delta\in(0,n]$, the Hausdorff content $\ch_{\fz}^{\dz}$ is a capacity; see, for instance, \cite{ch55}.
Therefore, if $w$ is a non-negative function on $\rn$ belonging to $L_{\loc}^1(\rn,\ch_{\fz}^{\dz})$,
then the set function $w_{\ch_{\fz}^{\dz}}$ is also a capacity.
Furthermore, if $F$ is a non-negative function on $\rn$ such that $F\in L_{\loc}^1(\rn,w_{\ch_{\fz}^{\dz}})$,
then $F_{w_{\ch_{\fz}^{\dz}}}$ is a capacity as well.
\item[(ii)]
(H\"{o}lder's inequality) Let $p\in(1,\infty)$.
Then, for any functions $f$ and $g$ on $\rn$, we have
    \[\int_{\rn}|f(x)g(x)|\,dC\leq 2\lf(\int_{\rn}|f(x)|^p\,dC\r)^{\frac{1}{p}}\lf(\int_{\rn}|g(x)|^q\,dC\r)^{\frac{1}{q}},\]
where $\frac1p+\frac1q=1$.
\item[(iii)] Let $N\in \nn$, $E\subset \rn$, and $\{f_j\}_{j=1}^N$ be a sequence of functions defined on $E$. Then
\[\int_E\lf|\sum_{j=1}^Nf_j(x)\r|\,dC\leq N\sum_{j=1}^{N}\int_E\lf|f_j(x)\r|\,dC.\]
\item[(iv)] Let $K$ be a positive constant and $E\subset \rn$ with $C(E)<\fz$. Then, for any function $f$ on $E$,
\[\int_{E}\lf[|f(x)|+K\r]\,dC=\int_{E}|f(x)|\,dC+KC(E).\]
\end{enumerate}
\end{rem}
Let $C$ be a capacity. For any non-negative function $f$ and any set $E\subset \rn$, we use the notation
\[\fint_Ef(x)\,d C:=\frac1{C(E)}\int_Ef(x)\,d C.\]
In this paper, we mainly focus on the Choquet integral associated with Hausdorff content. In the following, we summarize some required results regarding the Hausdorff content.
\begin{rem}\label{12-4}
Let $\delta\in(0,n]$.

\begin{enumerate}
\item[(i)] For any cube $Q$ of $\rn$, we have $\ch^{\delta}_{\fz}(Q)=[l(Q)]^{\delta}$ and, for any ball $B(x,r)$ with $x\in \rn$ and $r\in (0,\fz)$,
$\ch^{\delta}_{\fz}(B(x, r))\sim r^{\delta}$.

\item[(ii)] Let $\widetilde{\ch}_\fz^{\delta}$ be the \emph{dyadic Hausdorff content}, which is defined by the same way as
 $\ch^{\delta}_{\fz}$ but with cubes coverings $\{Q_i\}_i$ replaced by dyadic cubes coverings.
 Then there exists a positive constant $K(n, \delta)$ such that, for any subset $E\subset \rn$, $\ch^{\delta}_{\fz}(E)\le \widetilde{\ch}_\fz^{\delta}(E)\le K(n, \delta)\ch^{\delta}_{\fz}(E).$
 This equivalent Hausdorff content was proved to be a capacity in the sense of Choquet; see, for example, D. R. Adams \cite{Ad88}.

\item[(iii)] Applying the equivalence of $\ch_\fz^\delta$ and $\widetilde{\ch}_\fz^\delta$ and the sublinearity
of the Choquet integral with respect to $\widetilde{\ch}_\fz^\delta$ (see \cite[p.\,13, Theorem 1]{0923-2}),
we conclude that there exists a positive constant $K(n, \delta)$ such that, for any non-negative functions $\{f_j\}_{j\in\nn}$ defined on
$E$,
$$\int_{E}\sum_{j\in\nn}f_j(x)\,d\ch_\fz^\delta
\le K(n, \delta)\sum_{j\in\nn}\int_{E}f_j(x)\,d\ch_\fz^\delta.$$
\item[(iv)]
Let $E\subset \rn$ and $\{f_n\}_{n\in\nn}$ be a sequence of non-negative functions defined on $E$. If $f_{n}\uparrow f$, then
\[\lim_{n\to\fz}\int_{E}f_n(x)\,d\widetilde{\ch}_\fz^{\delta}=\int_{E}f(x)\,d\widetilde{\ch}_\fz^{\delta}.\]
\end{enumerate}
\end{rem}

\subsection{Capacitary Maximal Operator}\label{s11.11-1}
For $f \in L_{\loc}^1(\rn,\ch_{\fz}^{\dz})$, the \emph{capacitary uncentered Hardy--Littlewood maximal function} $\cm_{\ch_{\fz}^{\dz}}f$ of $f$ is defined by
\begin{equation}\label{eq319-2}
\cm_{\ch_{\fz}^{\dz}}f(x):=\sup_{Q\ni x}\fint_Q|f(y)|\,d\ch_{\fz}^{\dz},\quad \forall\,x\in\rn,
\end{equation}
where the supremum is taken over all cubes $Q\subset\rn$ containing $x$.

\begin{rem}\label{rem925-1}
\begin{enumerate}
\item[{\rm(i)}]
Let $\widetilde\cm_{\ch_{\fz}^{\dz}}$ be defined in the same way as $\cm_{\ch_{\fz}^{\dz}}$, but with cubes replaced by open cubes. Then, for any $f\in L^1_{\loc}(\rn, \ch_{\fz}^{\dz})$,
\begin{align}\label{eq924-1}
\widetilde\cm_{\ch_{\fz}^{\dz}}f(x)=\cm_{\ch_{\fz}^{\dz}}f(x),\quad \forall\,x\in\rn.
\end{align}
Indeed, for any $f\in L^1_{\loc}(\rn, \ch_{\fz}^{\dz})$ and any $x\in\rn$, it is clear that
$\widetilde\cm_{\ch_{\fz}^{\dz}}f(x)\le\cm_{\ch_{\fz}^{\dz}}f(x).$
Furthermore, given $x\in\rn$, by the definition of $\cm_{\ch_{\fz}^{\dz}}$, for any $\varepsilon>0$, there exists a cube $Q(x_0,l)$ containing $x$ such that
\[\cm_{\ch_{\fz}^{\dz}}f(x)-\varepsilon\leq \fint_{Q(x_0,l)}|f(y)|\,d\ch_{\fz}^{\dz}.\]
For any $\eta>0$, let $Q_1$ be the open cube centered at $x_0$ with side length $l+\eta$. Then $Q(x_0,l)\subset Q_1$, and hence
from Remark \ref{12-4}(i), it follows that
\[\cm_{\ch_{\fz}^{\dz}}f(x)-\varepsilon\leq \frac{(l+\eta)^{\dz}}{l^{\dz}}\fint_{Q_1}|f(y)|\,d\ch_{\fz}^{\dz}\leq\frac{(l+\eta)^{\dz}}{l^{\dz}}\widetilde\cm_{\ch_{\fz}^{\dz}}f(x).\]
Finally, letting $\eta\to 0$ first and then $\varepsilon\to 0$, we conclude that, for any $x\in \rn$,
$$\cm_{\ch_{\fz}^{\dz}}f(x)\leq \widetilde\cm_{\ch_{\fz}^{\dz}}f(x).$$
Thus \eqref{eq924-1} holds.
\item[{\rm(ii)}]
For any $f\in L^1_{\loc}(\rn, \ch_{\fz}^{\dz})$ and any $a>0$, the set $\{x\in\rn: \cm_{\ch_{\fz}^{\dz}}f(x)>a\}$ is open.
Indeed, by (i), it suffices to show that $F_a:=\{x\in\rn: \widetilde\cm_{\ch_{\fz}^{\dz}}f(x)>a\}$ is open.
To this end, we may assume without loss of generality that $F_a\neq \emptyset$.
For any $x\in F_a$, there exists an open cube $Q$ containing $x$ such that
\[\fint_{Q}|f(y)|\,d\ch_{\fz}^{\dz}>a.\]
Then $Q\subset F_a$. Thus there exists $\delta>0$ such that $B(x,\delta)\subset Q\subset F_a$ and hence $F_a$ is open.

\item[{\rm(iii)}]
Let $\cm^*_{\ch_{\fz}^{\dz}}$ be the \emph{capacitary centered Hardy-Littlewood maximal function},
which is defined by setting, for any $f\in L_{\rm loc}^1(\rn,\ch_\fz^\delta)$,
\begin{equation}\label{eq319-2}
\cm_{\ch_{\fz}^{\dz}}f(x):=\sup_{Q\ni x}\fint_Q|f(y)|\,d\ch_{\fz}^{\dz},\quad \forall\,x\in\rn,
\end{equation}
where the supremum is taken over all cubes $Q\subset\rn$ centered at $x$.
Then there exists a positive constant $C_0$ independent of $f$ such that, for any $x\in \rn$,
$$\frac 1{C_0}\cm^*_{\ch_{\fz}^{\dz}}f(x)\le \cm_{\ch_{\fz}^{\dz}}f(x)\le  C_0 \cm^*_{\ch_{\fz}^{\dz}}f(x)$$.

\item[{\rm(iv)}]
By Remark \ref{rem-11112-1}(iii), we find that, for any $f,g\in L^1_{\loc}(\rn, \ch_{\fz}^{\dz})$,
\[
\cm_{\ch_{\fz}^{\dz}}(f+g)\leq 2 \lf(\cm_{\ch_{\fz}^{\dz}}f+ \cm_{\ch_{\fz}^{\dz}}g\r).
\]
Furthermore, it follows from Remark \ref{12-4}(iii) that,
for any sequence of non-negative functions $\{f_j\}_{j\in\nn}$ and any sequence of non-negative numbers $\{c_j\}_{j\in\nn}$,
\[
\cm_{\ch_{\fz}^{\dz}}\left(\sum_{j\in\nn}c_jf_{j}\right)
\ls \sum_{j\in\nn}c_j\cm_{\ch_{\fz}^{\dz}}(f_{j}).
\]
\item[{\rm(v)}]
Let $p\in[1, \fz)$ and $f\in L^p(\rn, \ch_{\fz}^{\dz})$. Then, by \cite[Theorem 1.2]{ChSp2024}, we have
\[
\ch_{\fz}^{\dz}\left(\left\{x\in\rn: \cm_{\ch_{\fz}^{\dz}}f(x)>t\right\}\right)
\ls \frac{1}{t^p}\int_{\rn}|f(x)|^p\,d\ch_{\fz}^{\dz},\quad \forall\,t\in(0,\fz).
\]

\item[{\rm(vi)}]
It is clear that for any $f\in L^{\fz}(\rn, \ch_{\fz}^{\dz})$,
$
\|\cm_{\ch_{\fz}^{\dz}}f\|_{L^{\fz}(\rn, \ch_{\fz}^{\dz})}\leq \|f\|_{L^{\fz}(\rn, \ch_{\fz}^{\dz})}.
$
\end{enumerate}
\end{rem}

\subsection{Capacitary Muckenhoupt Class $\ca_{p,\dz}$}\label{s11.11-2}

In this subsection, let us first recall the \emph{capacitary Muckenhoupt $\mathcal A_{p,\delta}$-weight} introduced in \cite{HZZ25}. A function $w$ is called a $\ch_\fz^\delta$-\emph{capacitary weight} on $\rn$
if $w(x)\in(0,\infty)$  $\mathcal{H}_\infty^\delta$--a.e. and $w \in L_{\text{loc}}^1(\mathbb{R}^n, \mathcal{H}_\infty^\delta)$. For $p \in (1, \infty)$, a $\ch_\fz^\delta$-capacitary weight $w$ is said to be in the $ \mathcal{A}_{p,\delta}$ class
if
\begin{align}\label{eq1113-1}
[w]_{\mathcal A_{p,\delta}} := \sup_{\substack{\text{cube } Q \subset \mathbb{R}^n}} \left\{ \fint_Q w(x) \, d\mathcal{H}_\infty^\delta \right\} \left\{ \fint_Q w(x)^{-\frac{1}{p-1}} \, d\mathcal{H}_\infty^\delta \right\}^{p-1} < \infty.
\end{align}
A $\ch_\fz^\delta$-capacitary weight $w$ is said to be in the $\mathcal{A}_{1,\delta}$ class if 
\begin{align}\label{eq1113-2}
[w]_{\mathcal{A}_{1,\delta}} := \sup_{\substack{\text{cube } Q \subset \mathbb{R}^n}} \left\{ \fint_Q w(x) \, d\mathcal{H}_\infty^\delta \right\} \| w^{-1} \|_{L^\infty(Q, \mathcal{H}_\infty^\delta)} < \infty.
\end{align}
Moreover, $\mathcal{A}_{\infty,\delta} := \bigcup_{p \in [1, \infty)} \mathcal{A}_{p,\delta}$.

Furthermore, let $p \in [1, \infty)$ and $w$ be a $\ch_\fz^\delta$-capacitary weight. 
Then it follows from Remark \ref{rem-11112-1}(i) that $w_{\mathcal{H}_\infty^\delta}$ defined as \eqref{eq1111-6}, i.e.
\begin{equation}\label{eq1111-3}
w _{\mathcal H^{\delta}_\infty}(E):=\int_Ew(x)\,d\ch_\fz^\delta,\quad \forall\,E\subset \rn,
\end{equation}
 is also a capacity.
Suppose $F \in L_{\text{loc}}^1(\mathbb{R}^n, w_{\mathcal{H}_\infty^\delta})$ and $F(x)\in(0, \infty)$ $w_{\mathcal{H}_\infty^\delta}$--a.e.. For $p \in (1, \infty)$, we say $F \in \mathcal{A}_{p, w_{\mathcal{H}_\infty^\delta}}$ if the inequality \eqref{eq1113-1} still holds for $w$ replaced by $F$ and $\mathcal{H}_\infty^\delta$ replaced by $w_{\mathcal{H}_\infty^\delta}$. The corresponding capacitary weight constant is denoted $[F]_{\mathcal{A}_{p, w_{\mathcal{H}_\infty^\delta}}}$. Similarly, $F \in \mathcal{A}_{1, w_{\mathcal{H}_\infty^\delta}}$ if the inequality \eqref{eq1113-2} still holds for $w$ replaced by $F$ and $\mathcal{H}_\infty^\delta$ replaced by $w_{\mathcal{H}_\infty^\delta}$, and the corresponding constant is denoted $[F]_{\mathcal{A}_{1, w_{\mathcal{H}_\infty^\delta}}}$.

We summarize some properties of $\ca_{p,\dz}$ in the following proposition.

\begin{prop}\label{prop-0901-1}
Let $\delta\in(0,n]$.

\begin{enumerate}
\item[\rm{(i)}]
If $1<p<q<\fz$, then $\ca_{1,\dz}\subsetneqq \ca_{p,\dz}\subsetneqq \ca_{q,\dz}\subsetneqq \mathcal{A}_{\infty,\delta}$.

\item[\rm{(ii)}]
Let $p\in[1,\fz)$ and $w\in\ca_{p,\dz}$. Then there exist positive constant $K$ such that, for any cube $Q\subset\rn$,
    \begin{align}\label{eq1111-1}
    w_{\ch_{\fz}^{\dz}}(2Q)\leq Kw_{\ch_{\fz}^{\dz}}(Q)
    \end{align}
    and
   \begin{align}\label{eq1111-2}
   \sum_{j\in\zz}\ch_{\fz}^{\dz}(  W_j\cap Q)\leq K\ch_{\fz}^{\dz}(Q).
   \end{align}
Here, for any $j\in\zz$,
$
  W_j:=\{x\in \rn: 2^{j-1}\leq w(x)< 2^j\}.
$

\item[\rm{(iii)}]
If $p\in(1,\fz)$, then $w\in\ca_{p,\dz}$ if and only if
$w^{-\frac{1}{p-1}}\in\ca_{\frac{p}{p-1},\dz}$.

\item[\rm{(iv)}]
Let $p\in [1,\fz)$ and $w\in\ca_{p,\dz}$. Then there exist positive constants $K$ and $\gamma\in(0,1)$ such that,
for any cube $Q$,
\begin{align*}
\lf\{\fint_{Q}w(x)^{1+\gamma}\,d\ch_{\infty}^{\delta}\r\}^{\frac{1}{1+\gamma}}\leq K\fint_{Q}w(x)\,d\ch_{\infty}^{\delta}.
\end{align*}

\item[\rm{(v)}]
Let $p\in(1,\infty)$ and $w\in \ca_{p,\delta}$. Then there exists $q$ with $1<q<p$ such that $w\in \ca_{q,\delta}$.

\item[\rm{(vi)}]
Let $p\in [1,\fz)$, $w\in\ca_{p,\dz}$ and $\gamma$ be as in {\rm (iv)}. 
Then there exists a constant $\gamma_1\in(0, \gamma]$ such that $w^{1+\gamma_1}\in \ca_{p,\delta}$.
\end{enumerate}
\end{prop}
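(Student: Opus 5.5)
The statement collects properties of $\ca_{p,\dz}$ proved in \cite{HZZ25}, so the plan is to recall the arguments there item by item. The only tools needed are: the Choquet H\"older inequality of Remark \ref{rem-11112-1}(ii), which has constant $2$; the identity $\ch_\fz^\dz(Q)=[l(Q)]^\dz$; and the equivalence with the dyadic content $\widetilde\ch_\fz^\dz$ from Remark \ref{12-4}, which supplies countable sublinearity up to a constant.

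\emph{Items (i) and (iii).} Item (iii) is immediate from \eqref{eq1113-1}. Put $\sigma:=w^{-1/(p-1)}$. Then
\[
\Bigl(\fint_Q\sigma\,d\ch_\fz^\dz\Bigr)\Bigl(\fint_Q\sigma^{-1/(p'-1)}\,d\ch_\fz^\dz\Bigr)^{p'-1}=[\,\cdots]^{1/(p-1)},
\]
since $\sigma^{-1/(p'-1)}=w$, so the two characteristics are powers of one another. For the inclusions in (i), compare $\fint_Q w^{-1/(q-1)}$ with $\fint_Q w^{-1/(p-1)}$ by the Choquet H\"older inequality, applied on $Q$ with exponent $\frac{q-1}{p-1}>1$. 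This costs only a factor $2$, which is harmless. The inclusion $\ca_{1,\dz}\subset\ca_{p,\dz}$ is handled the same way using $\|w^{-1}\|_{L^\fz(Q)}$. Strictness would be shown with power weights $|x|^{\alpha}$: I would compute $\fint_Q|x|^\alpha\,d\ch_\fz^\dz$ on cubes near and far from the origin, using $\ch_\fz^\dz(\{|x|^\alpha>t\}\cap Q)\sim\min\{l(Q),t^{1/\alpha}\}^\dz$. This yields admissible ranges of $\alpha$ depending on $\dz$ and $p$, which separate the classes.

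\emph{Item (ii).} First I will show that for $E\subset Q$,
\[
\bigl(\ch_\fz^\dz(E)/\ch_\fz^\dz(Q)\bigr)^p\ls [w]_{\ca_{p,\dz}}\,w_{\ch_\fz^\dz}(E)/w_{\ch_\fz^\dz}(Q).
\]
To do this, write $\ch_\fz^\dz(E)=\int_E w^{1/p}w^{-1/p}\,d\ch_\fz^\dz$ and apply the Choquet H\"older inequality. Applying this bound to the pair $Q\subset 2Q$, together with $\ch_\fz^\dz(2Q)=2^\dz\ch_\fz^\dz(Q)$, gives the doubling estimate \eqref{eq1111-1}.

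For \eqref{eq1111-2}, let $w_Q:=\fint_Qw\,d\ch_\fz^\dz$ and split the sum according to whether $2^j\ge w_Q$ or $2^j<w_Q$.
\begin{itemize}
\item When $2^j\ge w_Q$: since $w\ge 2^{j-1}$ on $W_j$, we get $\ch_\fz^\dz(W_j\cap Q)\le 2^{1-j}w_{\ch_\fz^\dz}(Q)=2^{1-j}w_Q\ch_\fz^\dz(Q)$. Summing this geometric series over these $j$ gives $\ls\ch_\fz^\dz(Q)$.
\item When $2^j<w_Q$ and $p>1$: on $W_j$ we have $w^{-1/(p-1)}>2^{-j/(p-1)}$. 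Hence $\ch_\fz^\dz(W_j\cap Q)\le 2^{j/(p-1)}\int_Q w^{-1/(p-1)}\,d\ch_\fz^\dz\le [w]^{1/(p-1)}_{\ca_{p,\dz}}(2^j/w_Q)^{1/(p-1)}\ch_\fz^\dz(Q)$, which is again geometric in $j$.
\item When $p=1$: the sets $W_j\cap Q$ with $2^j\ll w_Q/[w]_{\ca_{1,\dz}}$ are $\ch_\fz^\dz$-null, and the finitely many remaining terms are each bounded by $\ch_\fz^\dz(Q)$.
\end{itemize}

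\emph{Item (iv), the main obstacle.} For the reverse H\"older inequality I will follow the Gehring/Coifman--Fefferman scheme adapted to $\widetilde\ch_\fz^\dz$. Work with dyadic subcubes of $Q$ and run a Calder\'on--Zygmund stopping-time argument at heights $\lambda>w_Q$ for the dyadic capacitary maximal function of $w\mathbf 1_Q$. This produces disjoint maximal cubes $Q_k$ with $\fint_{Q_k}w\sim\lambda$, and the level set $\{w>\lambda\}\cap Q$ is contained in $\bigcup_k Q_k$ up to an $\ch_\fz^\dz$-null set. The inequality from (ii), applied to $E=\{w\le \beta\lambda\}\cap Q_k$, shows that this set occupies a fixed proportion of $Q_k$. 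Because $\ch_\fz^\dz$ is not additive, the step from ``a proportion of each $Q_k$'' to a global distributional inequality is the delicate point. I would try to bypass additivity altogether. The plan is to estimate $\int_0^\fz\lambda^{\gamma}\ch_\fz^\dz(\{w>\lambda\}\cap Q)\,d\lambda$ via $\sum_k\ch_\fz^\dz(Q_k)$; this sum is comparable to $\widetilde\ch_\fz^\dz(\bigcup_k Q_k)$ when the maximal dyadic cubes are chosen as an optimal packing. To make that comparison valid, I would use the layer decomposition by the sets $W_j$ and the packing bound \eqref{eq1111-2}, since these replace additivity. Small $\gamma$ then absorbs the resulting constant.

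\emph{Items (v) and (vi).} Both follow from (iv). For (v), apply (iv) to $\sigma=w^{-1/(p-1)}$, which lies in $\ca_{p',\dz}$ by (iii). The improved integrability of $\sigma$ exactly gives $w\in\ca_{q,\dz}$ for some $q<p$. For (vi), choose $\gamma_1$ small and combine (iv) for $w$ with (iv) for $\sigma$: this bounds both factors of $[w^{1+\gamma_1}]_{\ca_{p,\dz}}$ by the corresponding powers of the averages of $w$ and $\sigma$, and these are controlled by $[w]_{\ca_{p,\dz}}^{1+\gamma_1}$.
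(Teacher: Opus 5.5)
Your arguments for (i)--(iii), (v) and (vi) are sound. The proof of (ii) via $\bigl(\ch_\fz^\dz(E)/\ch_\fz^\dz(Q)\bigr)^p\ls[w]_{\ca_{p,\dz}}\,w_{\ch_\fz^\dz}(E)/w_{\ch_\fz^\dz}(Q)$, together with the split of the $W_j$-sum at $2^j\approx w_Q:=\fint_Qw\,d\ch_\fz^\dz$, is correct and self-contained. Your (vi) is exactly the paper's argument, except that for $p=1$ the function $\sigma$ is undefined; there you should argue directly, as the paper does, via $\fint_Qw^{1+\gamma}\,d\ch_\fz^\dz\ls(\fint_Qw\,d\ch_\fz^\dz)^{1+\gamma}\ls w(x)^{1+\gamma}$ for $\ch_\fz^\dz$-a.e. $x\in Q$. (The paper proves only (vi) and cites (i)--(v) from \cite{HZZ25}.)

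The genuine gap is your reconstruction of (iv), on which (v) and (vi) rest. Estimating $\int_0^\fz\lambda^\gamma\ch_\fz^\dz(\{w>\lambda\}\cap Q)\,d\lambda$ through $\sum_k\ch_\fz^\dz(Q_k)$ cannot close a Gehring loop. The only facts available on a stopping cube are $\ch_\fz^\dz(Q_k)<\lambda^{-1}w_{\ch_\fz^\dz}(Q_k)$ and $\ch_\fz^\dz(Q_k)\ls\ch_\fz^\dz(\{w>\beta\lambda\}\cap Q_k)$. The first at best yields $\int_Qw^{1+\gamma}\,d\ch_\fz^\dz\ls\gamma^{-1}\int_Qw^{\gamma}\,dw_{\ch_\fz^\dz}$, whose constant blows up as $\gamma\to0$. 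The second yields only the vacuous $\ch_\fz^\dz(\{w>C\lambda\}\cap Q)\ls\ch_\fz^\dz(\{w>\beta\lambda\}\cap Q)$. What you need is the mixed inequality
\[
w_{\ch_\fz^\dz}(\{w>C\lambda\}\cap Q)\ls\lambda\,\ch_\fz^\dz(\{w>\beta\lambda\}\cap Q),\qquad\lambda\ge w_Q,
\]
which is the analogue of \eqref{eq-0908-1}. Integrating it against $\gamma\lambda^{\gamma-1}\,d\lambda$ bounds $\int_Qw^\gamma\,dw_{\ch_\fz^\dz}$ by $C\,w_Q^{1+\gamma}\ch_\fz^\dz(Q)+C\gamma\beta^{-1-\gamma}\int_Qw^{1+\gamma}\,d\ch_\fz^\dz$. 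To close, you need the Fubini substitute $\int_Qw^{1+\gamma}\,d\ch_\fz^\dz\le4\int_Qw^{\gamma}\,dw_{\ch_\fz^\dz}$ from \eqref{eq1112-3}, which is absent from your plan. You also need a truncation that makes the absorption legitimate, as in the proofs of Theorem \ref{prop-0916-1} and Lemma \ref{lm-0919-1}.

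The device you name for handling non-additivity is also not the right one. The stopping cubes are dictated by the stopping rule and cannot be ``chosen as an optimal packing''. Moreover, \eqref{eq1111-2} controls the level sets $W_j$ of $w$, not the geometry of the $Q_k$, so it gives no comparison between $\sum_k\ch_\fz^\dz(Q_k)$ and $\ch_\fz^\dz(\bigcup_kQ_k)$. The mixed inequality comes instead from three ingredients.

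(a) $w_{\ch_\fz^\dz}(\bigcup_kQ_k)\ls\lambda\,\ch_\fz^\dz(\bigcup_kQ_k)$, which follows from maximality. In a near-optimal dyadic cover of $\bigcup_kQ_k$, each cube may be assumed to lie in $Q$ and to equal, or strictly contain, some $Q_k$. Hence each has $w$-average at most $2^\dz\lambda$.

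(b) The $\ca_{p,\dz}$ proportion bound $\ch_\fz^\dz(Q_k)\le2\,\ch_\fz^\dz(\{w>\beta\lambda\}\cap Q_k)$ for small $\beta$. Note that it is the complement of your set $E$, not $E$ itself, that occupies a fixed proportion of $Q_k$.

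(c) Adams' subfamily lemma, Lemma \ref{lm-0904-4}. It gives $\ch_\fz^\dz(\bigcup_kQ_k)\le2\sum_v\ch_\fz^\dz(Q_{k_v})$ and the packing bound $\sum_v\int_{Q_{k_v}}f\,d\ch_\fz^\dz\ls\int_{\bigcup_vQ_{k_v}}f\,d\ch_\fz^\dz$, which you apply with $f=\mathbf 1_{\{w>\beta\lambda\}}$.
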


\begin{proof}
Properties (i)--(v) are standard and can be found in \cite{HZZ25}.
We now prove (vi).

Let $p\in[1,\fz)$ and $w\in\ca_{1,\dz}$. Then, by (iv), there exists a positive constant $\gamma$ such that, for any cube $Q$,
\[
\fint_{Q}w(y)^{1+\gamma}\,d\ch_{\infty}^{\delta}\ls \lf\{\fint_{Q}w(y)\,d\ch_{\infty}^{\delta}\r\}^{1+\gamma}\ls w(x)^{1+\gamma}
\]
for $\ch_\fz^\delta$--a.e. $x\in Q$. When $p=1$, by choosing $\gamma_1=\gamma$, we hace $w^{1+\gamma_1}\in\ca_{1,\dz}$.

When $p\in(1,\fz)$,  then $w^{-\frac1{p-1}}\in \mathcal A_{\frac p{p-1},\delta}$ and hence, by (iv),
there exist a positive constant $\widetilde{\gamma}$ such that, for every cube $Q$,
\begin{align*}
\lf\{\fint_{Q}w(x)^{-\frac{1+\widetilde{\gamma}}{p-1}}\,d\ch_{\infty}^{\delta}\r\}^{\frac{1}{1+\widetilde{\gamma}}}\ls \fint_{Q}w(x)^{-\frac{1}{p-1}}\,d\ch_{\infty}^{\delta}.
\end{align*}
Set $\gamma_1=\min\{\gamma, \widetilde{\gamma}\}$. By H\"older's inequality and $w\in\ca_{p,\dz}$, for any cube $Q\subset\rn$, we have
\begin{align*}
&\lf\{\fint_Qw(x)^{1+\gamma_1}\,d\mathcal H^{\delta}_\infty\r\}\lf\{\fint_Qw(x)^{-\frac{1+\gamma_1}{p-1}}\,d\mathcal H^{\delta}_\infty\r\}^{p-1}\\
&\quad\ls\lf\{\fint_{Q}w(x)^{1+\gamma}\,d\ch_{\infty}^{\delta}\r\}^{\frac{1+\gamma_1}{1+\gamma}}\lf\{\fint_Qw(x)^{-\frac{1+\widetilde{\gamma}}{p-1}}\,d\mathcal H^{\delta}_\infty\r\}^{\frac{(p-1)(1+\gamma_1)}{1+\widetilde{\gamma}}}\\
&\quad\ls\lf\{\fint_{Q}w(x)\,d\ch_{\infty}^{\delta}\r\}^{1+\gamma_1}\lf\{\fint_{Q}w(x)^{-\frac{1}{p-1}}\,d\ch_{\infty}^{\delta}\r\}^{(p-1)(1+\gamma_1)}\ls 1,
\end{align*}
which implies that $w^{1+\gamma_1}\in\ca_{p,\dz}$.
This completes the proof of (vi).
\end{proof}

\section{Reverse H\"older Class $\rh_{q,\dz}$ over Hausdorff Content}\label{rh01}

In this section, we introduce the reverse H\"older class $\rh_{q,\dz}$ over Hausdorff content as follows.

\begin{defn}\label{rh}
Let $\delta\in(0,n]$. The \emph{capacitary reverse H\"older class $\rh_{q,\dz}$} with $q\in(1,\fz)$ is defined to be the set of all $\ch_\fz^\delta$-capacitary weights $w$ satisfying
\[
[w]_{\rh_{q,\dz}}:=\sup_{\text{cube}\,Q\,\subset \rn}\frac{1}{w_{\ch_{\fz}^{\dz}}(Q)}\left\{\sum_{j\in\zz}\ch_{\fz}^{\dz}(   W_j\cap Q)\right\}^{1-\frac{1}{q}}\left\{\int_{2Q}w(x)^q\,d\ch_{\fz}^{\dz}\right\}^{\frac{1}{q}}<\fz.
\]
Here and in what follows, for any $j\in\zz$, $$   W_j:=\{x\in\rn: 2^{j-1}<w(x)\le 2^j\}.$$
The \emph{capacitary reverse H\"older class $\rh_{\infty,\dz}$} is defined to be the set of all $\ch_\fz^\delta$-capacitary weight $w$ satisfying
\[
[w]_{\rh_{\fz,\dz}}:=\sup_{\text{cube}\,Q\,\subset \rn}\frac{1}{w_{\ch_{\fz}^{\dz}}(Q)}\left\{\sum_{j\in\zz}\ch_{\fz}^{\dz}(   W_j\cap Q)\|w\|_{L^{\fz}(2Q, \ch_{\fz}^{\dz})}\right\}<\fz.
\]
Set $\rh_{1,\dz}:=\bigcup_{q\in(1,\fz]}\rh_{q,\dz}.$
\end{defn}

At the special case $\delta=n$, the capacitary reverse H\"older class $\rh_{q,\dz}$, restricted on measurable sets and functions, coincides with the classical reverse H\"older class $RH_q$ of Gehring \cite{Gr73}.

Concerning this definition, we give some remarks below.

\begin{rem}
Let $w$ be a $\ch_\fz^\delta$-capacitary weight. Then, for any subset $E\subset \rn$,
\begin{align}\label{eq1113-5}
\ch_{\fz}^{\dz}(E)\leq \sum_{j\in\zz}\ch_{\fz}^{\dz}(    W_j\cap E).
\end{align}
Indeed, this is a consequence of the decomposition
\[E=\bigcup_{j\in\zz}(   W_j\cap E)\cup \{x\in\rn: w(x)=0\}\cup \{x\in\rn: w(x)=\fz\}\]
 and the countable subadditivity of $\ch_{\fz}^{\dz}$.

\end{rem}

\begin{rem}\label{re109}
Let $w\in\rh_{q,\dz}$ with $q\in(1,\fz)$. Then the following reverse H\"older inequality
holds
\begin{align}\label{8e26}\lf(\fint_Qw(x)^qd\ch_{\fz}^{\dz}\r)^{\frac1q}\le [w]_{\rh_{q,\dz}} \fint_Qw(x)d\ch_{\fz}^{\dz},\quad\forall\, \text{cube} \ Q\subset\rn.
\end{align}
Indeed, by \eqref{eq1113-5}, we find that, for any $Q\subset\rn$,
\[
\ch_{\fz}^{\dz}(Q)^{1-\frac{1}{q}}\left(\int_{Q}w(x)^q\,d\ch_{\fz}^{\dz}\right)^{\frac{1}{q}}\leq\lf[\sum_{j\in\zz}\ch_{\fz}^{\dz}(   W_j\cap Q)\r]^{1-\frac{1}{q}}\left(\int_{2Q}w(x)^q\,d\ch_{\fz}^{\dz}\right)^{\frac{1}{q}}
\le [w]_{\rh_{q,\dz}}w_{\ch_{\infty}^{\delta}}(Q),
\]
which implies \eqref{8e26}.
Similarly, if $w\in\rh_{\infty,\dz}$. then, for any cube $Q\subset \rn$,
\begin{align}\label{8e30}
\|w\|_{L^{\fz}(Q, \ch_{\fz}^{\dz})}\le[w]_{\rh_{\fz,\dz}}\fint_{Q}w(x)d\ch_{\fz}^{\dz}.\end{align}
\end{rem}

\subsection{Basic Properties of
$\rh_{q,\dz}$}\label{s11.11-6}

In this subsection we prove the following basic properties on the newly introduced capacitary reverse H\"older class $\rh_{q,\dz}$.

\begin{prop}\label{prop-0902-1}
\begin{enumerate}
\item[\rm{(i)}]
If $1<p<q<\fz$, then $\rh_{\fz,\dz}\subset\rh_{q,\dz}\subset\rh_{p,\dz}\subset\rh_{1,\dz}$.

\item[\rm{(ii)}]
Let $w\in\rh_{p,\dz}$ with $p\in (1,\fz]$. Then there exists a positive constant $C$ such that for any cube $Q\subset\rn$,
\begin{equation}\label{eq0902-3}
w_{\ch_{\fz}^{\dz}}(2Q)\leq Cw_{\ch_{\fz}^{\dz}}(Q)
\end{equation}
and
\begin{equation}\label{eq0902-2}
\sum_{j\in\zz}\ch_{\fz}^{\dz}(   W_j\cap Q)\leq C\ch_{\fz}^{\dz}(Q).
\end{equation}
\end{enumerate}
\end{prop}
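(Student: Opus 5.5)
The plan is to compare Choquet $L^p$ and $L^q$ averages using the Hölder inequality in Remark \ref{rem-11112-1}(ii) together with three facts: the pointwise bound \eqref{eq1113-5}, the identity $\ch_\fz^\dz(2Q)=2^\dz\ch_\fz^\dz(Q)$ from Remark \ref{12-4}(i), and the definition of $[w]_{\rh_{q,\dz}}$. Throughout, fix a cube $Q$ and write $S_Q:=\sum_{j\in\zz}\ch_\fz^\dz(W_j\cap Q)$, so that \eqref{eq1113-5} reads $\ch_\fz^\dz(Q)\le S_Q$.

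For (i), take $1<p<q<\fz$ and $w\in\rh_{q,\dz}$. Applying Hölder's inequality on $2Q$ with exponents $q/p$ and its conjugate gives
\[
\Bigl(\int_{2Q}w^p\,d\ch_\fz^\dz\Bigr)^{\frac1p}\le 2^{\frac1p}\,\ch_\fz^\dz(2Q)^{\frac1p-\frac1q}\Bigl(\int_{2Q}w^q\,d\ch_\fz^\dz\Bigr)^{\frac1q}.
\]
Next I replace $\ch_\fz^\dz(2Q)^{\frac1p-\frac1q}$ by $2^{\dz(\frac1p-\frac1q)}\ch_\fz^\dz(Q)^{\frac1p-\frac1q}$, and then bound this by $2^{\dz(\frac1p-\frac1q)}S_Q^{\frac1p-\frac1q}$ using \eqref{eq1113-5}. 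Multiplying by $S_Q^{1-\frac1p}$ makes the exponents of $S_Q$ combine to exactly $1-\frac1q$. This yields
\[
\frac{S_Q^{1-\frac1p}}{w_{\ch_\fz^\dz}(Q)}\Bigl(\int_{2Q}w^p\,d\ch_\fz^\dz\Bigr)^{\frac1p}\le 2^{\frac1p+\dz(\frac1p-\frac1q)}[w]_{\rh_{q,\dz}},
\]
so $\rh_{q,\dz}\subset\rh_{p,\dz}$. The inclusion $\rh_{\fz,\dz}\subset\rh_{q,\dz}$ goes the same way, starting from the trivial bound $\int_{2Q}w^q\,d\ch_\fz^\dz\le\|w\|_{L^\fz(2Q,\ch_\fz^\dz)}^q\,\ch_\fz^\dz(2Q)$. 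The inclusion $\rh_{p,\dz}\subset\rh_{1,\dz}$ holds by definition.

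For (ii), I first treat $p\in(1,\fz)$ and prove \eqref{eq0902-2}. On one side, Hölder's inequality gives
\[
w_{\ch_\fz^\dz}(Q)\le 2\Bigl(\int_Q w^p\,d\ch_\fz^\dz\Bigr)^{\frac1p}\ch_\fz^\dz(Q)^{1-\frac1p}.
\]
On the other side, the definition of $[w]_{\rh_{p,\dz}}$ together with $Q\subset 2Q$ gives
\[
S_Q^{1-\frac1p}\Bigl(\int_{Q}w^p\,d\ch_\fz^\dz\Bigr)^{\frac1p}\le[w]_{\rh_{p,\dz}}\,w_{\ch_\fz^\dz}(Q).
\]
Combining the two and cancelling the common factor $(\int_Q w^p\,d\ch_\fz^\dz)^{1/p}$ gives $S_Q^{1-\frac1p}\le 2[w]_{\rh_{p,\dz}}\ch_\fz^\dz(Q)^{1-\frac1p}$, which is \eqref{eq0902-2}.

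For \eqref{eq0902-3}, I apply Hölder on $2Q$ and then use $\ch_\fz^\dz(2Q)=2^\dz\ch_\fz^\dz(Q)\le 2^\dz S_Q$:
\[
w_{\ch_\fz^\dz}(2Q)\le 2\,\ch_\fz^\dz(2Q)^{1-\frac1p}\Bigl(\int_{2Q}w^p\,d\ch_\fz^\dz\Bigr)^{\frac1p}\le 2^{1+\dz(1-\frac1p)}S_Q^{1-\frac1p}\Bigl(\int_{2Q}w^p\,d\ch_\fz^\dz\Bigr)^{\frac1p}\le 2^{1+\dz}[w]_{\rh_{p,\dz}}\,w_{\ch_\fz^\dz}(Q).
\]
For $p=\fz$ I repeat both arguments with the bounds $w_{\ch_\fz^\dz}(E)\le\|w\|_{L^\fz(E,\ch_\fz^\dz)}\ch_\fz^\dz(E)$ for $E=Q,2Q$ replacing Hölder. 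Alternatively, I can use (i) to reduce to a finite $p$.

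The only delicate point is the cancellation step in the proof of \eqref{eq0902-2}. It needs $0<\int_Q w^p\,d\ch_\fz^\dz<\fz$ and $0<w_{\ch_\fz^\dz}(Q)<\fz$. Positivity follows from $w>0$ $\ch_\fz^\dz$--a.e. and $\ch_\fz^\dz(Q)>0$. Finiteness of $\int_Q w^p\,d\ch_\fz^\dz$ follows from finiteness of $[w]_{\rh_{p,\dz}}$ together with $w\in L^1_{\loc}(\rn,\ch_\fz^\dz)$. With these checked, every other step is a direct computation.
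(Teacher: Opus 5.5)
Your proposal is correct and follows the paper's argument. Part (i) is the same H\"older comparison on $2Q$ combined with $\ch_{\fz}^{\dz}(2Q)=2^{\dz}\ch_{\fz}^{\dz}(Q)$ and \eqref{eq1113-5}, and part (ii) is the paper's estimate \eqref{eq0902-1} specialized to $E=Q$ and $E=2Q$; the only cosmetic differences are that you cancel $(\int_Q w^p\,d\ch_{\fz}^{\dz})^{1/p}$ rather than $w_{\ch_{\fz}^{\dz}}(Q)$, and you explicitly check that the cancelled quantity lies in $(0,\fz)$, which the paper leaves implicit.
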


\begin{proof}
(i) Let \(w\in \rh_{\infty,\delta}\). Then for any cube \(Q\subset\mathbb R^n\) and for \(\ch^\delta_\infty\)--a. e. \(x\in 2Q\),
\[
\sum_{j\in\zz}\ch_{\fz}^{\dz}(   W_j\cap Q)w(x)\le [w]_{\rh_{\fz,\dz}}w_{\ch_{\fz}^{\dz}}(Q).
\]
By this, $\ch_{\fz}^{\dz}(2Q)=2^{\dz}\ch_{\fz}^{\dz}(Q)$ and \eqref{eq1113-5}, 
we obtain, for any cube \(Q\subset\mathbb R^n\),
\begin{align*}
\sum_{j\in\zz}\ch_{\fz}^{\dz}(   W_j\cap Q)\left\{\int_{2Q}w(x)^q\,d\ch_{\fz}^{\dz}\right\}^{\frac{1}{q}}
&\le [w]_{\rh_{\fz,\dz}}
\ch_{\fz}^{\dz}(2Q)^{\frac1q}w_{\ch_{\fz}^{\dz}}(Q)\\
&\le2^{\frac{\dz}{q}}[w]_{\rh_{\fz,\dz}} \left\{\sum_{j\in\zz}\ch_{\fz}^{\dz}(  W_j\cap Q)\right\}^{\frac1q}w_{\ch_{\fz}^{\dz}}(Q).
\end{align*}
This implies that \([w]_{\rh_{q,\dz}}
\le2^{\frac{\dz}{q}}[w]_{\rh_{\fz,\dz}}\)
and hence $\rh_{\fz,\dz}\subset\rh_{q,\dz}$.

Next, we show that $\rh_{q,\dz}\subset\rh_{p,\dz}$. For any $w\in \rh_{q,\dz}$, by H\"older's inequality, $\ch_{\fz}^{\dz}(2Q)=2^{\dz}\ch_{\fz}^{\dz}(Q)$ and \eqref{eq1113-5},  we have, for any cube \(Q\subset\mathbb R^n\),
\[
\begin{aligned}
&\left\{\sum_{j\in\zz}\ch_{\fz}^{\dz}(  W_j\cap Q)\right\}^{1-\frac{1}{p}}\left\{\int_{2Q}w(x)^p\,d\ch_{\fz}^{\dz}\right\}^{\frac{1}{p}}\\
&\quad\le2\left\{\sum_{j\in\zz}\ch_{\fz}^{\dz}(  W_j\cap Q)\right\}^{1-\frac{1}{q}}\left\{\int_{2Q}w(x)^q\,d\ch_{\fz}^{\dz}\right\}^{\frac{1}{q}}\ch_{\fz}^{\dz}(2Q)^{\frac1p-\frac1q}\left\{\sum_{j\in\zz}\ch_{\fz}^{\dz}(  W_j\cap Q)\right\}^{\frac1q-\frac{1}{p}}\\
&\quad \le 2^{\dz(\frac1p-\frac1q)+1}[w]_{\rh_{q,\dz}}
w_{\ch_{\fz}^{\dz}}(Q),
\end{aligned}
\]
which implies
\[
[w]_{\rh_{p,\dz}}
\le 2^{\dz(\frac1p-\frac1q)+1}[w]_{\rh_{q,\dz}}.
\]
Thus, $\rh_{q,\dz}\subset\rh_{p,\dz}$. It is obvious that $\rh_{p,\dz}\subset\rh_{1,\dz}$. Therefore, (i) is proved.

(ii) We first claim that, if $w\in \mathcal R\mathcal H_{p,\delta}$ with $p\in(1,\fz]$,
then for any cube $Q$ of $\rn$ and $E\subset 2Q$,
\begin{equation}\label{eq0902-1}
\frac{w_{\ch_{\fz}^{\dz}}(E)}{w_{\ch_{\fz}^{\dz}}(Q)}\le 2[w]_{\rh_{p,\dz}} \left\{\frac{\ch_{\fz}^{\dz}(E)}{\sum_{j\in\zz}\ch_{\fz}^{\dz}(  W_j\cap Q)}\right\}^{1-\frac1p}.
\end{equation}
If this claim is true, taking $E=Q$ in \eqref{eq0902-1} immediately yields \eqref{eq0902-2}, and taking $E=2Q$ in \eqref{eq0902-1}, together with the fact $\ch_{\fz}^{\dz}(2Q)=2^{\dz}\ch_{\fz}^{\dz}(Q)$ and \eqref{eq1113-5}, immediately implies \eqref{eq0902-3}.

We now only need to show \eqref{eq0902-1}. If $p=\fz$, namely, $w\in\rh_{\fz,\dz}$,
then for any cube $Q\subset\rn$ and $\ch_{\fz}^{\dz}$--a. e. $x\in 2Q$,
\[
\sum_{j\in\zz}\ch_{\fz}^{\dz}(  W_j\cap Q)w(x)
\le [w]_{\rh_{\fz,\dz}} w_{\ch_{\fz}^{\dz}}(Q).\]
Taking the Choquet integral over $E$ with respect to $\ch_{\fz}^{\dz}$ yields \eqref{eq0902-1} for $p=\fz$.

If $p\in(1,\fz)$, then by H\"older's inequality and the definition of $\rh_{p,\dz}$,
\[
\begin{aligned}
\int_{E}w(x)\,d\ch_{\fz}^{\dz}&\le 2 \left\{\int_{E}w(x)^{p}\,d\ch_{\fz}^{\dz}\right\}^{\frac1p}\ch_{\fz}^{\dz}(E)^{1-\frac1p}\le2 \left\{\int_{2Q}w(x)^{p}\,d\ch_{\fz}^{\dz}\right\}^{\frac1p}\ch_{\fz}^{\dz}(E)^{1-\frac1p}\\
&\le2 [w]_{\rh_{p,\dz}}w_{\ch_{\fz}^{\dz}}(Q)\left\{\frac{\ch_{\fz}^{\dz}(E)}{\sum_{j\in\zz}\ch_{\fz}^{\dz}(  W_j\cap Q)}\right\}^{1-\frac1p},
\end{aligned}
\]
which implies \eqref{eq0902-1} for $p\in(1,\fz)$, and hence (ii) holds.
\end{proof}

\begin{rem}\label{09re13}
	Let $q\in(1,\fz)$. Then 
	$w\in \rh_{q,\dz}$ if and only if $w$ satisfies the reverse H\"older inequality \eqref{8e26}.
	To achieve this, by Remark \ref{re109}, it suffices to show that \eqref{8e26} implies $w\in \rh_{q,\dz}$.
	Indeed, using Proposition \ref{prop-0902-1}(ii) and the fact that $\ch_{\fz}^{\dz}(2Q)=2^{\dz}\ch_{\fz}^{\dz}(Q)$ for any $Q\subset\rn$, we deduce that
	\begin{align*}
		&\left\{\sum_{j\in\zz}\ch_{\fz}^{\dz}(    W_j\cap Q)\right\}^{1-\frac{1}{q}}\left\{\int_{2Q}w(x)^q\,d\ch_{\fz}^{\dz}\right\}^{\frac{1}{q}}\\
		&\quad\ls \ch_{\fz}^{\dz}(2Q)^{1-\frac{1}{q}}\left\{\int_{2Q}w(x)^q\,d\ch_{\fz}^{\dz}\right\}^{\frac{1}{q}}
		\ls \ch_{\fz}^{\dz}(2Q)\left\{\fint_{2Q}w(x)\,d\ch_{\fz}^{\dz}\right\}
		\sim  w_{\ch_{\infty}^{\delta}}(Q)
	\end{align*}
	which implies $w\in \rh_{q,\dz}$.
	Similarly, $w\in\rh_{\infty,\dz}$ if and only if $w$ satisfies \eqref{8e30}.
\end{rem}

From \eqref{eq1113-5} and \eqref{eq0902-1}, we immediately deduce the following lemma.

\begin{lem}\label{lm0922-2}
Let $p\in(1,\fz]$ and  $w\in\rh_{p,\dz}$. Then for any cube $Q$ of $\rn$ and any subset $E\subset Q$,
\begin{align}\label{eq0922-4}
\frac{w_{\ch_{\fz}^{\dz}}(E)}{w_{\ch_{\fz}^{\dz}}(Q)}\leq 2[w]_{\rh_{p,\dz}}\left[\frac{\ch_{\fz}^{\dz}(E)}{\ch_{\fz}^{\dz}(Q)}\right]^{1-\frac1p}.
\end{align}
\end{lem}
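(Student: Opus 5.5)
The plan is to read \eqref{eq0922-4} off directly from the claim \eqref{eq0902-1} established in the proof of Proposition \ref{prop-0902-1}(ii), combined with the elementary lower bound \eqref{eq1113-5}. Fix $p\in(1,\fz]$, $w\in\rh_{p,\dz}$, a cube $Q\subset\rn$, and $E\subset Q$. Since $E\subset Q\subset 2Q$, the claim \eqref{eq0902-1} applies to $E$ and gives
\begin{align*}
\frac{w_{\ch_{\fz}^{\dz}}(E)}{w_{\ch_{\fz}^{\dz}}(Q)}\le 2[w]_{\rh_{p,\dz}} \left\{\frac{\ch_{\fz}^{\dz}(E)}{\sum_{j\in\zz}\ch_{\fz}^{\dz}(W_j\cap Q)}\right\}^{1-\frac1p}.
\end{align*}
For $p=\fz$ the exponent $1-\frac1p$ is read as $1$, which is exactly the form produced in the $p=\fz$ case of that proof.

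Next, apply \eqref{eq1113-5} with the set $Q$, which gives $\ch_{\fz}^{\dz}(Q)\le\sum_{j\in\zz}\ch_{\fz}^{\dz}(W_j\cap Q)$. Since $1-\frac1p\ge0$, the map $t\mapsto t^{1-\frac1p}$ is nondecreasing. Replacing the denominator by the smaller quantity $\ch_{\fz}^{\dz}(Q)$ can therefore only increase the right-hand side, and this yields \eqref{eq0922-4}.

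The only point needing care is degenerate cases. If $w_{\ch_{\fz}^{\dz}}(Q)=0$, the left side is not meaningful. This cannot happen, because $w>0$ $\ch_{\fz}^{\dz}$--a.e. and $\ch_{\fz}^{\dz}(Q)>0$. Also $w_{\ch_{\fz}^{\dz}}(Q)<\fz$ by local integrability, so the quotient is well defined. There is no genuine obstacle: the lemma is a direct corollary of the two cited facts.
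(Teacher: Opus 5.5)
Your proposal is correct and is exactly the paper's argument: the lemma is stated there as an immediate consequence of \eqref{eq0902-1}, applied with $E\subset Q\subset 2Q$, and \eqref{eq1113-5}, applied to $Q$ to replace the denominator by the smaller quantity $\ch_{\fz}^{\dz}(Q)$. Your check that $0<w_{\ch_{\fz}^{\dz}}(Q)<\fz$ is a harmless addition.
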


\subsection{Capacitary Weighted Function Spaces}\label{s11.11-6}

For any $\ch_\fz^\delta$-capacitary weight $w$ and $E\subset\rn$,
we define the \emph{weighted Choquet--Lebesgue space}
$L^p_w(E,\ch_\fz^\delta)$ with $p\in[1,\fz)$ as the set of all functions $f$ satisfying
\begin{align}\label{eq319-1}
\|f\|_{L^p_w(E,\ch_\fz^\delta)}
:=&\lf\{\int_E |f(x)|^pw(x)\,d\ch_\fz^\delta \r\}^{\frac{1}{p}}<\fz,
\end{align}
and the space $L^p(E,w_{\mathcal H^{\delta}_\infty})$ with $p\in[1,\fz)$ as the set of all functions $f$ satisfying
\begin{align}\label{eq319-1'}
\|f\|_{L^p(E,w_{\mathcal H^{\delta}_\infty})}
:=&\lf\{\int_E |f(x)|^p\,dw_{\mathcal H^{\delta}_\infty}\r\}^{\frac{1}{p}}<\fz.
\end{align}
Moreover, define the \emph{weak Choquet--Lebesgue spaces} $L^{p,\fz}(E, w_{\mathcal H^{\delta}_\infty})=L_w^{p,\fz}(E, \ch_{\fz}^{\dz})$ as the set of all functions $f$ satisfying
\begin{align*}
\|f\|:=\sup_{t\in(0,\fz)} t\lf(w_{\mathcal H^{\delta}_\infty}(\{x\in E:\,|f(x)|>t\})\r)^{\frac{1}{p}}<\fz,
\end{align*}
and the \emph{spaces}
$L^{\fz}(E,w_{\mathcal H^{\delta}_\infty})=L^{\fz}_w(E,\ch_\fz^\delta)$ as the collection of all functions $f$ satisfying
\begin{align*}
\|f\|:=\inf(\{t>0:\  w _{\mathcal H^{\delta}_\infty}(\{x\in E:\,|f(x)|>t\})=0\})<\fz,
\end{align*}
where $w _{\mathcal H^{\delta}_\infty}$
is as in \eqref{eq1111-3}.

We now clarify the relationship between
$L^p_w(\rn,\ch_\fz^\delta)$ and  $L^p(\rn,w_{\ch_\fz^\delta})$.

\begin{rem}\label{8rem26}
\begin{enumerate}
\item[{\rm\textbf{(i)}}]\textbf{Strict embedding}: For any $\ch_\fz^\delta$-capacitary weight $w$, the proof of \cite[Proposition 2.7]{HZZ25} infers that $L^{p}(\rn,w_{\ch_\fz^\dz})\subset  L^{p}_w(\rn,\ch_\fz^\dz)$ for $p\in[1,\fz)$ and
\begin{align}\label{eq1112-3}
\|f\|_{L^{p}_w(\rn,\ch_\fz^\dz)}\leq 4^{\frac1p} \|f\|_{L^{p}(\rn,w_{\ch_\fz^\dz})}.
\end{align}
Furthermore, for any general capacity $C$, it also holds
\begin{align}\label{eq1112-3'}
\|f\|_{L^{p}_w(\rn,C)}\ls\|f\|_{L^{p}(\rn,w_{C})}.
\end{align}
However, a counterexample in \cite[Remark 2.9(iii)]{HZZ25} yields that for some weight $w$, there exists function $f\in L^{p}_w(\rn,\ch_\fz^\dz)$ such that $f\notin L^{p}(\rn,w_{\ch_\fz^\dz})$, i.e.,
$$L^{p}(\rn,w_{\ch_\fz^\dz})\subsetneqq  L^{p}_w(\rn,\ch_\fz^\dz).$$

\item[{\rm\textbf{(ii)}}]
\textbf{Characterization of the condition such that $L^{p}_w(\rn,\ch_\fz^\dz)=L^{p}(\rn,w_{\ch_\fz^\dz})$} for $p\in[1,\fz)$:
By establishing Radon--Nikodym-type theorem in \cite[Proposition 2.7 and Remark 2.9(iv)]{HZZ25}, the following statements are equivalent
\begin{enumerate}
\item[{\rm{(ii-1)}}]
For any $f\in L^{p}_w(\rn,\ch_\fz^\dz)$,
$\|f\|_{L^{p}(\rn,w_{\ch_\fz^\dz})}\ls \|f\|_{L^{p}_w(\rn,\ch_\fz^\dz)}$;
\item[{\rm{(ii-2)}}]
For any cube $Q$ of $\rn$,
$\int_{Q}\frac{1}{w(x)}\,dw_{\ch_{\infty}^{\delta}}\ls \ch_{\infty}^{\delta}(Q)$;
\item[{\rm{(ii-3)}}]
For any subset $E$ of $\rn$, 
\begin{equation}\label{eq1113-3}
\sum_{j\in\zz}\ch_{\infty}^{\delta}(E\cap   W_j)\ls\ch_{\infty}^{\delta}(E).	
\end{equation}
\end{enumerate}
Thus, any one of them leads to $L^{p}_w(\rn,\ch_\fz^\dz)=L^{p}(\rn,w_{\ch_\fz^\dz})$ with equivalent quasi-norms.
\end{enumerate}
\end{rem}

\begin{rem}\label{rem1113-1}
For any $\ch_\fz^\delta$-capacitary weight $w$, we point out that the condition \eqref{eq1113-3}
is equivalent to 
\begin{align}\label{eq1113-4}
\sum_{j\in\zz}\ch_{\infty}^{\delta}(Q\cap   W_j)\ls\ch_{\infty}^{\delta}(Q),\quad 
\forall {\rm cube}\ Q\subset\rn.
\end{align}
In fact, the implication \eqref{eq1113-3}$\Rightarrow$ \eqref{eq1113-4} is obvious. 
Conversely, let $E\subset \rn$. If $\ch_{\fz}^{\dz}(E)=0$ or $\ch_{\fz}^{\dz}(E)=\fz$, then \eqref{eq1113-3} holds trivially. Now assume $0<\ch_{\fz}^{\dz}(E)<\fz$. By the definition of the Hausdorff content, there exists countable covering of $E$ by cubes $\{Q_k\}_{k\in\nn}$ such that
\[\sum_{k\in\nn}\ch_{\fz}^{\dz}(Q_k)\leq 2\ch_{\fz}^{\dz}(E).\]
By this and applying condition \eqref{eq1113-4} to each cube $Q_j$, we obtain
\begin{align*}
\sum_{j\in\zz}\ch_{\infty}^{\delta}(E\cap   W_j)&\leq \sum_{j\in\zz}\sum_{k\in\nn}\ch_{\infty}^{\delta}(Q_k\cap   W_j)
=\sum_{k\in\nn}\sum_{j\in\zz}\ch_{\infty}^{\delta}(Q_k\cap   W_j)\\
&\ls\sum_{k\in\nn}\ch_{\infty}^{\delta}(Q_k)\ls \ch_{\infty}^{\delta}(E),
\end{align*}
which establishes \eqref{eq1113-3}. Therefore \eqref{eq1113-3}$\iff$ \eqref{eq1113-4}.

Combining \eqref{eq1113-5}, \eqref{eq0902-2}, and Remark \ref{rem1113-1} yields that for any  $w\in\rh_{q,\dz}$ with $q\in (1,\fz]$ the  equivalence
\[\ch_{\fz}^{\dz}(E)\sim \sum_{j\in\zz}\ch_{\fz}^{\dz}(    W_j\cap E)\]
holds, which is fundamental for establishing norm equivalence in the associated function spaces.
\end{rem}

Combining \eqref{eq0902-2} and Remarks \ref{rem1113-1} and \ref{8rem26}(ii), we immediately conclude the following lemma.

\begin{lem}\label{lm-0903-1}
Let $w \in \rh_{p,\delta}$ with $p\in(1,\fz]$. Then there exists a positive constant $C$ such that,
for any $f\in L_w^1(\rn,\ch_\fz^\delta)$,
\[\int_\rn |f(x)|\,dw_{\mathcal H^{\delta}_\infty}
\leq C\int_{\mathbb R^n}|f(x)|w(x)\,d\mathcal H^{\delta}_\infty.\]
\end{lem}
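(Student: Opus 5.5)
The lemma follows by combining the quasi-additivity of $\ch_{\fz}^{\dz}$ over the level sets of $w$ with the Radon--Nikodym-type characterization recalled in Remark \ref{8rem26}(ii).

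\textbf{Step 1: the cube condition.} Let $w\in\rh_{p,\dz}$ with $p\in(1,\fz]$. Proposition \ref{prop-0902-1}(ii), namely \eqref{eq0902-2}, gives a constant $C$ such that
\[
\sum_{j\in\zz}\ch_{\fz}^{\dz}(W_j\cap Q)\le C\ch_{\fz}^{\dz}(Q)\quad\text{for every cube } Q\subset\rn .
\]
This is exactly condition \eqref{eq1113-4}.

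\textbf{Step 2: from cubes to arbitrary sets.} Remark \ref{rem1113-1} shows that \eqref{eq1113-4} implies \eqref{eq1113-3} for every subset $E\subset\rn$. The argument is to cover $E$ by cubes $\{Q_k\}_k$ with $\sum_k \ch_{\fz}^{\dz}(Q_k)\le 2\ch_{\fz}^{\dz}(E)$, then use countable subadditivity on each $W_j\cap E$ and interchange the two sums.

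\textbf{Step 3: invoke the equivalence.} Condition (ii-3) of Remark \ref{8rem26}(ii) is now verified. The equivalence established there gives (ii-1) with $p=1$:
\[
\|f\|_{L^1(\rn,w_{\ch_\fz^\dz})}\ls\|f\|_{L^1_w(\rn,\ch_\fz^\dz)} .
\]
Unwinding the definitions \eqref{eq319-1} and \eqref{eq319-1'}, this is the claimed inequality.

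\textbf{Where the work lies.} The only nontrivial ingredient is the implication (ii-3)$\Rightarrow$(ii-1), and it is being quoted from \cite{HZZ25}. If a self-contained argument were wanted, I would do the following.
\begin{enumerate}
\item Decompose $|f|$ along the level sets $W_j$, noting that $w\sim 2^j$ on $W_j$.
\item For each $t>0$, bound $w_{\ch_\fz^\dz}(\{|f|>t\})$ by $\sum_j 2^j\ch_{\fz}^{\dz}(\{|f|>t\}\cap W_j)$, using countable subadditivity of $w_{\ch_\fz^\dz}$. The set $\{w=0\}$ contributes nothing to the weighted capacity, and $\{w=\fz\}$ is $\ch_\fz^\dz$-null.
\item Relate this sum to the distribution function of $|f|w$, using \eqref{eq1113-3} to control the overlap of the level sets.
\item Integrate in $t$.
\end{enumerate}
The main obstacle in that route is the nonlinearity of the Choquet integral. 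It is precisely what \eqref{eq1113-3} compensates for, and it is why I would rely on the cited result rather than redo it.
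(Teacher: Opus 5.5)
Your proposal is correct and follows the paper's argument exactly: \eqref{eq0902-2} gives the cube condition \eqref{eq1113-4}, Remark \ref{rem1113-1} upgrades it to \eqref{eq1113-3} for arbitrary sets, and the implication (ii-3)$\Rightarrow$(ii-1) of Remark \ref{8rem26}(ii) with $p=1$ yields the inequality. Your optional self-contained route also goes through in a few lines, so nonlinearity is not really an obstacle there: since $2^j<2w$ on $W_j$, and interchanging $\sum_j$ with $\int_0^\fz dt$ for nonnegative terms is just monotone convergence, you get $\int_\rn|f|\,dw_{\ch_\fz^\dz}\le\sum_j 2^j\int_{W_j}|f|\,d\ch_\fz^\dz\le 2\int_0^\fz\sum_j\ch_\fz^\dz(\{|f|w>t\}\cap W_j)\,dt$, and a single application of \eqref{eq1113-3} to $E=\{|f|w>t\}$ finishes the proof.
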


\begin{rem}\label{8rem27}
Let $w \in \rh_{p,\delta}$ with $p\in(1,\fz)$. As a consequence of Lemma  \ref{lm-0903-1}, we have
\[\int_{Q}\frac{1}{w(x)}\,dw_{\ch_{\infty}^{\delta}}\ls \ch_{\infty}^{\delta}(Q)\]
and
\[\int_{Q}w(x)^{p-1}\,dw_{\ch_{\infty}^{\delta}}\ls
\int_{Q}w(x)^p\,d\mathcal H^{\delta}_\infty
\]
for any cube $Q$ of $\rn$. These two inequalities will be frequently used.
\end{rem}

Furthermore, the spaces
$L^p_w(\rn,\ch_\fz^\delta)$ and  $L^p(\rn,w_{\ch_\fz^\delta})$ are indeed equivalent for $w \in \rh_{p,\delta}$.

\begin{rem}\label{8rem29}
Propositions \ref{prop-0901-1}(ii)
and \ref{prop-0902-1}(ii)
respectively show that
	for any $w\in\ca_{p,\dz}$ or $w\in\rh_{p,\dz}$ with $p\in (1,\fz]$, \eqref{eq1113-4} holds. Thus, from Remarks \ref{rem1113-1} and \ref{8rem26}(ii), we infer that, for $w\in\{\ca_{p,\dz},\rh_{p,\dz}\}$ with $p\in (1,\fz]$, $L^{p}_w(\rn,\ch_\fz^\dz)=L^{p}(\rn,w_{\ch_\fz^\dz})$ and there exists $W_0>0$ such that
\[{\frac14}\int_{\mathbb R^n}|f(x)|^pw(x)\,d\mathcal H^{\delta}_\infty\le \int_\rn |f(x)|^p\,dw_{\mathcal H^{\delta}_\infty}
\le W_0 \int_{\mathbb R^n}|f(x)|^pw(x)\,d\mathcal H^{\delta}_\infty.\]
\end{rem}

\subsection{Relation Between $\rh_{q,\delta}$ and $\ca_{p,\dz}$}\label{s11.11-9}

A well-known result of Coifman and Fefferman \cite{CoFe74} established that
$w\in A_{\fz}$ if and only if $w\in RH_{1}$, which forms a fundamental and crucial connection between Muckenhoupt $A_{p}$-weights and reverse H\"older classes.

In this subsection, we
establish the equivalence between the capacitary reverse H\"older class $\rh_{1,\dz}$ and the capacitary Muckenhoupt class $\ca_{\fz,\dz}$  through a different method, which goes beyond Lebesgue measures. This result plays an important role in the sequel.

\begin{thm}\label{prop-0916-1}
Let $\delta\in(0,n]$. Then $w\in\ca_{\fz,\dz}$ if and only if $w\in\rh_{1,\dz}$.
\end{thm}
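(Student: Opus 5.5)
The plan is to prove the two implications separately. Each reduces, via Remark \ref{09re13}, to comparing a reverse H\"older inequality with the $\ca_{p,\dz}$ condition.

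\emph{Step 1: $\ca_{\fz,\dz}\subset\rh_{1,\dz}$.} Let $w\in\ca_{p,\dz}$ for some $p\in[1,\fz)$. Proposition \ref{prop-0901-1}(iv) yields $\gamma\in(0,1)$ such that
\[
\Bigl\{\fint_Q w^{1+\gamma}\,d\ch_{\fz}^{\dz}\Bigr\}^{\frac1{1+\gamma}}\ls \fint_Q w\,d\ch_{\fz}^{\dz}
\]
for every cube $Q$. This is exactly \eqref{8e26} with $q=1+\gamma$. Remark \ref{09re13} then gives $w\in\rh_{1+\gamma,\dz}\subset\rh_{1,\dz}$. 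For completeness, Remark \ref{09re13} uses the doubling \eqref{eq0902-3} and \eqref{eq0902-2}. Both are also available directly from Proposition \ref{prop-0901-1}(ii) for $w\in\ca_{p,\dz}$, so no circularity arises.

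\emph{Step 2: $\rh_{1,\dz}\subset\ca_{\fz,\dz}$.} Let $w\in\rh_{q,\dz}$ with $q\in(1,\fz]$. By Proposition \ref{prop-0902-1}(i) we may assume $q<\fz$. Lemma \ref{lm0922-2} gives, for $E\subset Q$,
\[
\frac{w_{\ch_{\fz}^{\dz}}(E)}{w_{\ch_{\fz}^{\dz}}(Q)}\le 2[w]_{\rh_{q,\dz}}\Bigl[\frac{\ch_{\fz}^{\dz}(E)}{\ch_{\fz}^{\dz}(Q)}\Bigr]^{1-\frac1q}.
\]
I would show that $w^{-1/(p-1)}$ is controlled on $Q$ for some large $p$. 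The classical route goes through the dual inequality $\ch_{\fz}^{\dz}(E)/\ch_{\fz}^{\dz}(Q)\ls [w_{\ch_{\fz}^{\dz}}(E)/w_{\ch_{\fz}^{\dz}}(Q)]^{\eta}$, obtained by applying the above inequality to $Q\setminus E$. That step needs additivity, which is unavailable here, so I would instead work with level sets.

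Fix $Q$ and set $m_Q:=\fint_Q w\,d\ch_{\fz}^{\dz}$. For $k\ge1$ let $E_k:=\{x\in Q: w(x)\le 2^{-k}m_Q\}$. Trivially $w_{\ch_{\fz}^{\dz}}(E_k)\le 2^{-k}m_Q\ch_{\fz}^{\dz}(E_k)$. The goal is a decay estimate $\ch_{\fz}^{\dz}(E_k)\ls 2^{-k\epsilon}\ch_{\fz}^{\dz}(Q)$. Given this, the layer-cake formula gives
\[
\fint_Q w^{-\frac1{p-1}}\,d\ch_{\fz}^{\dz}\ls m_Q^{-\frac1{p-1}}\sum_k 2^{\frac{k}{p-1}}2^{-k\epsilon}\ls m_Q^{-\frac1{p-1}}
\]
once $\frac1{p-1}<\epsilon$, which is the $\ca_{p,\dz}$ condition.

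\emph{The main obstacle} is the decay of $\ch_{\fz}^{\dz}(E_k)$. My first attempt would be a stopping-time argument. Cover $E_k$ by maximal dyadic subcubes $P\subset Q$ on which the average of $w$ drops below $2^{-k/2}m_Q$. On each such $P$, apply Lemma \ref{lm0922-2} with respect to the parent cube, together with the doubling \eqref{eq0902-3}. This should show that the set $\{w\le 2^{-1}\fint_P w\}$ has content at most a fixed fraction $\theta<1$ of $\ch_{\fz}^{\dz}(P)$. To get the fraction, argue by contradiction: if the content is large, then $\fint_P w$ is comparable to $\fint$ over that low set, and the reverse H\"older inequality \eqref{8e26} on $P$ yields a contradiction via Remark \ref{12-4}(iii), quasi-subadditivity replacing additivity.

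Iterating this yields geometric decay. Countable sums of contents of disjoint dyadic cubes are controlled by the content of their union using the dyadic content $\widetilde{\ch}_\fz^{\delta}$ of Remark \ref{12-4}(ii), with constants only. Alternatively, if this gets stuck, I would try to prove $w^{-1}$-type estimates via the $\ca_{\fz}$ characterizations of \cite{HZZ25}, for instance a Fujii--Wilson type condition. I expect handling the non-additivity in this iteration to be where the real work lies.
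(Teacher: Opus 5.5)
Your Step 1 is correct and essentially the paper's argument. It is slightly leaner: you use \eqref{8e26} on $2Q$ together with doubling of $w$ from Proposition \ref{prop-0901-1}(ii), whereas the paper uses doubling of $w^{1+\gamma}$ via Proposition \ref{prop-0901-1}(vi).

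Step 2 has a genuine gap, exactly where you locate the difficulty. The single-scale bound $\ch_{\fz}^{\dz}(\{x\in P: w(x)\le\frac12\fint_P w\,d\ch_{\fz}^{\dz}\})\le\theta\,\ch_{\fz}^{\dz}(P)$ with $\theta<1$ is the complement step you rejected, only localized to $P$. Lemma \ref{lm0922-2} applied to the high set $F$ gives $\ch_{\fz}^{\dz}(F)\ge c\,\ch_{\fz}^{\dz}(P)$, but nothing turns this into an upper bound for the content of the low set $P\setminus F$. For instance, if $\delta\le n-1$, any subset of $P$ containing a face of $P$ has content exactly $\ch_{\fz}^{\dz}(P)$, whatever the content of its complement. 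Quasi-subadditivity bounds contents of unions from above, so it only ever yields lower bounds on pieces, never the upper bound you need. Likewise, \eqref{8e26} only bounds $w_{\ch_{\fz}^{\dz}}$ from above by content, so ``the low set has large content'' contradicts nothing. A lower bound for $w_{\ch_{\fz}^{\dz}}(E)$ in terms of $\ch_{\fz}^{\dz}(E)$ is itself an $\ca_{\fz,\dz}$-type statement, which is what you are trying to prove.

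The iteration has a second flaw. For $\delta<n$ it is false that sums of contents of disjoint dyadic cubes are controlled by the content of their union: the $2^{nk}$ dyadic subcubes of side $2^{-k}$ of a unit cube have total content $2^{k(n-\delta)}$, while their union has content $1$. Remark \ref{12-4}(ii) does not give this. Lemma \ref{lm-0904-4} only packs a subfamily, and a subfamily no longer covers the next level set. So a bound of the form $\theta\sum_P\ch_{\fz}^{\dz}(P)$ for the next stage cannot be compared with the content of the current stage. Your Fujii--Wilson fallback is not supported by anything established here.

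The missing idea is to dualize so that subadditivity points the right way. By Lemma \ref{lm-0903-1} and \eqref{eq1112-3}, $\ch_{\fz}^{\dz}(E)\sim\int_E w^{-1}\,dw_{\ch_{\fz}^{\dz}}$ for every set $E$. Contents of low sets of $w$ thus become $(w^{-1})_{w_{\ch_{\fz}^{\dz}}}$-capacities of high sets of $w^{-1}$, and the reverse H\"older inequality becomes \eqref{eq0904-1}, i.e.\ $w^{-1}\in\ca_{q',w_{\ch_{\fz}^{\dz}}}$. For such a Muckenhoupt-type weight, the fraction estimate (Lemma \ref{lm-0904-1}) comes from bounding the capacity of the low set from above, by testing the dual power on it. One then uses $w_{\ch_{\fz}^{\dz}}(Q)\le w_{\ch_{\fz}^{\dz}}(E)+w_{\ch_{\fz}^{\dz}}(Q\setminus E)$, which is the direction in which subadditivity actually helps.

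The paper then runs a Gehring-type argument for $w^{-1}$ relative to $w_{\ch_{\fz}^{\dz}}$. The Calder\'on--Zygmund cubes of Lemma \ref{lm-0904-5}, with Adams' subfamily (Lemma \ref{lm-0904-4}) replacing additivity of the stopping cubes, give \eqref{eq-0908-1} and then \eqref{eq0908-1}. The truncation $\max\{w,N\}$ stays in $\rh_{q,\dz}$ uniformly and makes the absorbed term finite; letting $N\to0$ via Remark \ref{12-4}(iv) removes it. This yields $\int_Q w^{-\gamma}\,d\ch_{\fz}^{\dz}\ls\ch_{\fz}^{\dz}(Q)^{1+\gamma}w_{\ch_{\fz}^{\dz}}(Q)^{-\gamma}$, that is, $w\in\ca_{\frac{1+\gamma}{\gamma},\dz}$.
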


To prove Theorem \ref{prop-0916-1}, we need several lemmas.
The following one is an extension of \cite[Lemma 4.5]{HZZ25}, and we omit the details here.
Recall that $\ca_{p, w_{\ch_{\fz}^{\dz}}}$ is defined as in Section \ref{s11.11-2}.

\begin{lem}\label{lm-0904-1}
Let $w$ be $\ch_\fz^\delta$-capacitary weight and $F\in\ca_{p, w_{\ch_{\fz}^{\dz}}}$ with $p\in(1,\fz)$. Then, for any $\beta\in(0,[F]_{\ca_{p, w_{\ch_{\fz}^{\dz}}}}^{-1})$ and any cube $Q\subset\rn$,
\[w_{\ch_{\infty}^{\delta}}\lf(\lf\{x\in Q:\ F(x)>\beta  F_{Q,w}\r\}\r)\geq \lf[1-\lf([F]_{\ca_{p, w_{\ch_{\fz}^{\dz}}}}\beta\r)^{\frac{1}{p-1}}\r]w_{\ch_{\infty}^{\delta}}(Q),\]
where $ F_{Q,w}:=\fint_{Q}F(x)\,dw_{\ch_{\infty}^{\delta}}$.
\end{lem}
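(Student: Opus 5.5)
This is the capacitary analogue of the classical lemma saying that an $A_p$ weight cannot be too small on a large part of a cube. I will follow the classical route, replacing linearity by the Hölder inequality of Remark \ref{rem-11112-1}(ii) (with its factor $2$) and by the sub-additivity of the capacity $w_{\ch_\fz^\dz}$.

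Fix a cube $Q$ and write $A:=[F]_{\ca_{p,w_{\ch_\fz^\dz}}}$. Set $E:=\{x\in Q: F(x)\le \beta F_{Q,w}\}$. Then
\[
Q=E\cup\{x\in Q: F(x)>\beta F_{Q,w}\},
\]
and sub-additivity gives $w_{\ch_\fz^\dz}(\{F>\beta F_{Q,w}\}\cap Q)\ge w_{\ch_\fz^\dz}(Q)-w_{\ch_\fz^\dz}(E)$. So it suffices to prove
\[
w_{\ch_\fz^\dz}(E)\le (A\beta)^{\frac1{p-1}}\,w_{\ch_\fz^\dz}(Q).
\]

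To bound $w_{\ch_\fz^\dz}(E)$, I would use that $F^{-\frac1{p-1}}\ge (\beta F_{Q,w})^{-\frac1{p-1}}$ on $E$. Together with monotonicity of the Choquet integral, this gives
\[
w_{\ch_\fz^\dz}(E)\,(\beta F_{Q,w})^{-\frac1{p-1}}\le\int_E F^{-\frac1{p-1}}\,dw_{\ch_\fz^\dz}\le\int_Q F^{-\frac1{p-1}}\,dw_{\ch_\fz^\dz}.
\]
The $\ca_{p,w_{\ch_\fz^\dz}}$ condition bounds the right-hand integral by $w_{\ch_\fz^\dz}(Q)\,(A/F_{Q,w})^{\frac1{p-1}}$. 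Combining the two, the factors $F_{Q,w}$ cancel and
\[
w_{\ch_\fz^\dz}(E)\le (A\beta)^{\frac1{p-1}}\,w_{\ch_\fz^\dz}(Q),
\]
which is exactly the required bound. This step uses only monotonicity, not additivity, so no Hölder constant enters and the constant in the lemma comes out exactly.

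It remains to handle degenerate cases. If $w_{\ch_\fz^\dz}(Q)=0$ the statement is trivial. Since $F>0$ holds $w_{\ch_\fz^\dz}$-a.e. and $F\in L^1_{\loc}$, we have $0<F_{Q,w}<\fz$ when $w_{\ch_\fz^\dz}(Q)>0$, so the manipulations above are legitimate.

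The restriction $\beta<A^{-1}$ only ensures that the right-hand side of the lemma is positive. One should also check that $A\ge1$ is not needed.

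I expect the main, if mild, obstacle to be the reduction to a bound on $w_{\ch_\fz^\dz}(E)$: complements do not split additively for a capacity, so one must rely only on sub-additivity, which goes in the right direction here. This is presumably why the authors omit the details as an extension of \cite[Lemma 4.5]{HZZ25}.
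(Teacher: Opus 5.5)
Your proof is correct, and it is the standard argument the paper has in mind when it calls this lemma an extension of Lemma 4.5 of \cite{HZZ25}: you bound $w_{\ch_{\fz}^{\dz}}(\{x\in Q: F(x)\le\beta F_{Q,w}\})$ using monotonicity of the Choquet integral together with the $\ca_{p,w_{\ch_{\fz}^{\dz}}}$ condition, and then pass to the complement using the constant-free subadditivity of the capacity $w_{\ch_{\fz}^{\dz}}$, which is exactly why the constant comes out without any loss. Two minor points: the H\"older inequality you mention at the start is never used, and the case $w_{\ch_{\fz}^{\dz}}(Q)=0$ cannot occur, because $w>0$ holds $\ch_{\fz}^{\dz}$--a.e.
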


Applying an argument similar to the proof of \cite[Lemma 3.8]{ZZH-2},
we obtain the following result.

\begin{lem}\label{lm-0904-5}
Let $Q$ be a cube in $\rn$. Suppose that $w$ is a $\ch_\fz^\delta$-capacitary weight satisfying  \eqref{eq0902-3} and \eqref{eq0902-2}. Let $F\in L^1_{w}(Q,{\ch}_\fz^{\delta})$. Then, for any $\lambda\ge\fint_{Q}|F(x)|\,dw_{{\ch}_\fz^{\delta}}$, there exist a collection of pairwise disjoint dyadic subcubes $\{Q_j\}_{j\in\nn}$ of $Q$ and positive constants $C_1$ and $C_2$ such that the following hold:
\begin{enumerate}
\item[{\rm(i)}]
for any $j\in\nn$,
$\lz<\fint_{Q_j}|F(x)|\,dw_{{\ch}_\fz^{\delta}}\le C_1\lambda;$
\item[{\rm(ii)}]
for ${\ch}_\fz^{\delta}$-almost every $x\in Q\backslash\cup_{j\in\nn}Q_j$,
$|F(x)|\leq C_2\lambda.$
\end{enumerate}
\end{lem}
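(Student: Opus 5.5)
We follow the Calderón--Zygmund stopping-time argument with respect to the capacity $w_{\ch_\fz^\dz}$, replacing additivity by the dyadic structure and by the hypotheses \eqref{eq0902-3} and \eqref{eq0902-2}. Fix $Q$, $F\in L^1_w(Q,\ch_\fz^\dz)$ and $\lambda\ge \fint_Q|F|\,dw_{\ch_\fz^\dz}$. Let $\{Q_j\}$ be the maximal dyadic subcubes $Q'\subsetneq Q$, obtained by repeated bisection, with
\[
\fint_{Q'}|F|\,dw_{\ch_\fz^\dz}>\lambda .
\]
Maximal dyadic cubes are pairwise disjoint, and since $Q$ itself does not satisfy the strict inequality, each $Q_j$ has a dyadic parent $\widehat Q_j\subset Q$ with $\fint_{\widehat Q_j}|F|\,dw_{\ch_\fz^\dz}\le\lambda$. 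This gives the lower bound in (i).

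For the upper bound in (i), use monotonicity of the Choquet integral together with $Q_j\subset\widehat Q_j\subset 2Q_j$:
\[
\int_{Q_j}|F|\,dw_{\ch_\fz^\dz}\le\int_{\widehat Q_j}|F|\,dw_{\ch_\fz^\dz}\le\lambda\, w_{\ch_\fz^\dz}(\widehat Q_j)\le\lambda\, w_{\ch_\fz^\dz}(2Q_j)\le C\lambda\, w_{\ch_\fz^\dz}(Q_j).
\]
The last step is the doubling \eqref{eq0902-3}, so we may take $C_1=C$. No additivity is needed here.

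For (ii), let $x\in Q\setminus\bigcup_j Q_j$. Every dyadic subcube $Q'\ni x$ of $Q$ then satisfies $\fint_{Q'}|F|\,dw_{\ch_\fz^\dz}\le\lambda$, so the dyadic weighted maximal function $M^{d}_wF(x)=\sup_{Q'\ni x}\fint_{Q'}|F|\,dw_{\ch_\fz^\dz}$ is at most $\lambda$. We then need a Lebesgue differentiation theorem for $w_{\ch_\fz^\dz}$: for $\ch_\fz^\dz$-a.e.\ $x$, $|F(x)|\le C_2 M^d_wF(x)$. To prove it, first pass to the measure-free form. By \eqref{eq0902-2}, Remarks \ref{rem1113-1} and \ref{8rem26}(ii), $w_{\ch_\fz^\dz}$ and $w\,d\ch_\fz^\dz$ give comparable integrals, so the averages of $|F|$ are comparable to $\frac{1}{w_{\ch_\fz^\dz}(Q')}\int_{Q'}|F|w\,d\ch_\fz^\dz$.

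Next fix $t>C_2\lambda$ and decompose $\{|F|>t\}\cap Q=\bigcup_k\big(\{|F|>t\}\cap W_k\cap Q\big)$. On $W_k$ the weight satisfies $w\sim 2^k$, so there $|F|w\sim 2^k|F|$ and $w_{\ch_\fz^\dz}(Q')$ is comparable to $2^k\ch_\fz^\dz(Q')$ whenever $Q'\cap W_k$ carries most of the mass. Apply the unweighted Lebesgue differentiation for $\ch_\fz^\dz$ on each piece, via the density of continuous functions in $L^1(\ch_\fz^\dz)$ and the weak type bound of Remark \ref{rem925-1}(v) in dyadic form. This should show that $\ch_\fz^\dz$-a.e.\ point of $\{|F|>t\}\cap W_k$ has small dyadic cubes on which the weighted average of $|F|$ exceeds $t/C$. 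Summing over $k$ with countable subadditivity gives that $\{x\in Q\setminus\bigcup Q_j: |F(x)|>C_2\lambda\}$ has $\ch_\fz^\dz$-content zero.

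The main obstacle is this differentiation step. A capacity is not a measure, so density points of $W_k$ relative to $\ch_\fz^\dz$ must be handled through \eqref{eq0902-2}, which replaces additivity over the level sets $W_k$. If the direct approach fails, the fallback is to use a density-point lemma for $\widetilde{\ch}_\fz^\dz$ (Remark \ref{12-4}(ii)), as in \cite[Lemma 3.8]{ZZH-2}, applied to $|F|w$ on each $W_k$.
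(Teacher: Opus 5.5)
Your stopping-time construction and the lower bound in (i) are fine. In the upper bound there is a small slip: the dyadic parent $\widehat Q_j$ is not contained in the concentric cube $2Q_j$ (it has $Q_j$ as a corner child), only in $3Q_j\subset 4Q_j$, so you need \eqref{eq0902-3} twice.

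\textbf{The gap is the differentiation step in (ii), and it cannot be closed.} Read with $C_2$ depending only on the constants in \eqref{eq0902-3} and \eqref{eq0902-2}, which is the uniformity in $Q$ and $\lambda$ that the proof of Theorem \ref{prop-0916-1} needs, (ii) is false. Take $n=2$, $\delta=1$, $L:=\{x_2=0\}$ and $w:=\mathbf 1_L+M\mathbf 1_{\mathbb R^2\setminus L}$ with $M\ge 2$. For every square $P$, projecting onto the $x_1$-axis gives $\ch_\fz^1(P\setminus L)=\ch_\fz^1(P)=l(P)$. Hence $w_{\ch_\fz^1}(P)=M\,l(P)$, so \eqref{eq0902-3} and \eqref{eq0902-2} hold with constant $2$ for every $M$ (indeed $[w]_{\rh_{\fz,1}}\le 2$). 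Let $Q:=(0,1]\times(-\frac12,\frac12]$ and $F:=\mathbf 1_L$. Since $w=1$ on $L$, every dyadic subcube $D$ of $Q$ satisfies
\[
\fint_D F\,dw_{\ch_\fz^1}=\frac{\ch_\fz^1(L\cap D)}{M\,l(D)}\le\frac1M=\fint_Q F\,dw_{\ch_\fz^1}.
\]
So for $\lambda=1/M$, condition (i) forces $\{Q_j\}=\emptyset$. Then (ii) says $\mathbf 1_L\le C_2/M$ on $Q$ off a $\ch_\fz^1$-null set. Since $\ch_\fz^1(L\cap Q)=1$, this forces $C_2\ge M$. Replacing $M$ by $(1+|x|)^\beta$ off $L$ gives a single weight satisfying both hypotheses for which no $C_2$ works uniformly over unit squares centred on $L$ far from the origin. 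By the same computation, that weight lies in $\rh_{\fz,1}$ but not in $\ca_{\fz,1}$.

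This is exactly where your heuristic breaks. The claim ``$w_{\ch_\fz^\dz}(Q')\sim 2^k\ch_\fz^\dz(Q')$ when $Q'\cap W_k$ carries most of the content'' is false: by non-additivity, two disjoint level sets ($L$ and $L^c$ above) can each carry all of it. At points of $L$ the unweighted density is perfect. What (ii) actually needs is the implication
\[
\ch_\fz^\dz(E\cap D)\ge c\,\ch_\fz^\dz(D)\ \Longrightarrow\ w_{\ch_\fz^\dz}(E\cap D)\ge c'\,w_{\ch_\fz^\dz}(D).
\]
The hypotheses \eqref{eq0902-3}, \eqref{eq0902-2} and their consequence \eqref{eq0922-4} only give the opposite direction, and the classical passage to complements relies on additivity. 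Your fallback via density points of $\widetilde\ch_\fz^\dz$ on each $W_k$ runs into the same obstruction.

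Your scheme does work under an $\ca_{p,\dz}$ assumption. H\"older's inequality gives $\bigl(\ch_\fz^\dz(E)/\ch_\fz^\dz(D)\bigr)^p\le 2^p[w]_{\ca_{p,\dz}}\,w_{\ch_\fz^\dz}(E)/w_{\ch_\fz^\dz}(D)$ for $E\subset D$. Combined with the dyadic density theorem for the dyadic content, which holds for arbitrary sets, this yields (ii) with $C_2$ depending on $n$, $\dz$, $p$ and $[w]_{\ca_{p,\dz}}$. But the paper uses this lemma precisely to prove $\rh_{1,\dz}\subset\ca_{\fz,\dz}$, so that hypothesis is not available there.
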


The following lemma comes from \cite[Lemma 2, (6) and (7)]{0923-2}.

\begin{lem}\label{lm-0904-4}
Let $Q$ be a cube in $\rn$. If $\{Q_j\}_{j\in\nn}$ is a family of non-overlapping dyadic subcubes of $Q$, then there exists a subfamily $\{Q_{j_v}\}_v$ such that
\begin{enumerate}
\item[{\rm(i)}]
$\ch_{\fz}^{\dz}(\cup_{j\in\nn}Q_j)\leq 2 \sum_{v}\ch_{\fz}^{\dz}(Q_{j_v})$;
\item[{\rm(ii)}]
there exists a positive constant $C$ such that for any $f\in L^1({\cup_{v\in\nn}Q_{j_v}, \ch_{\fz}^{\dz}})$,
\[\sum_{v}\int_{Q_{j_v}}|f(x)|\,d\ch_{\fz}^{\dz}\leq C\int_{\cup_{v}Q_{j_v}}|f(x)|\,d\ch_{\fz}^{\dz}.\]
\end{enumerate}
\end{lem}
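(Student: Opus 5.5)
The plan is to extract from $\{Q_j\}_{j\in\nn}$ a subfamily satisfying a \emph{dyadic packing condition},
\begin{align*}
\sum_{v:\,Q_{j_v}\subset R}[l(Q_{j_v})]^{\dz}\le 2[l(R)]^{\dz}\quad\text{for every dyadic cube } R,
\end{align*}
together with a covering estimate. I then derive (ii) from the packing condition and the equivalence $\ch_\fz^\dz\sim\widetilde{\ch}_\fz^\dz$ in Remark \ref{12-4}(ii). All cubes involved are dyadic subcubes of $Q$, so any two of them are either nested or disjoint, and every chain of ancestors inside $Q$ is finite.

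\textbf{Selection.} I order the $Q_j$ by decreasing side length; there are finitely many cubes of each size, so within a size any order will do. Processing the cubes in this order, I accept $Q_j$ if, for every dyadic ancestor $R\supseteq Q_j$ contained in $Q$, the sum of $[l(\cdot)]^\dz$ over the already accepted cubes in $R$, plus $[l(Q_j)]^\dz$, is at most $2[l(R)]^\dz$. For the trivial ancestor $R=Q_j$ this is automatic, since disjointness means no accepted cube lies in $Q_j$. The packing condition then holds by construction.

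\textbf{Proof of (i).} Suppose $Q_j$ is rejected because of an ancestor $R$. Then the accepted mass in $R$ exceeds $2[l(R)]^\dz-[l(Q_j)]^\dz\ge[l(R)]^\dz$, so $R$ is \emph{saturated}, meaning the accepted cubes inside $R$ have total $\dz$-content larger than $[l(R)]^\dz$; this persists as more cubes are accepted. I take the maximal saturated cubes $\{R_k\}$. They are pairwise disjoint and cover all rejected cubes. Hence $\bigcup_j Q_j$ is covered by the $R_k$ together with the accepted cubes lying outside every $R_k$, and so
\begin{align*}
\ch_\fz^\dz\Bigl(\bigcup_j Q_j\Bigr)\le\sum_k[l(R_k)]^\dz+\sum_{\text{accepted, outside all }R_k}[l(Q_{j_v})]^\dz\le\sum_v[l(Q_{j_v})]^\dz,
\end{align*}
which is better than the factor $2$ required.

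\textbf{Proof of (ii).} First I show that for any set $E$,
\begin{align*}
\sum_v\widetilde{\ch}_\fz^\dz(E\cap Q_{j_v})\le 3\,\widetilde{\ch}_\fz^\dz(E).
\end{align*}
Take a dyadic cover $\{P_i\}$ of $E$ with $\sum_i[l(P_i)]^\dz\le\widetilde{\ch}_\fz^\dz(E)+\varepsilon$. Each $P_i$ meeting $Q_{j_v}$ is either inside $Q_{j_v}$ or contains it.
\begin{itemize}
\item The $P_i$ inside $Q_{j_v}$ cover $E\cap Q_{j_v}$ except on the part lying in larger $P_i$. These cubes are distinct for distinct $v$, by disjointness, so they contribute at most $\sum_i[l(P_i)]^\dz$ in total.
\item If some $P_i\supseteq Q_{j_v}$, I bound $\widetilde{\ch}_\fz^\dz(E\cap Q_{j_v})$ by $[l(Q_{j_v})]^\dz$. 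Summing over $v$ and using the packing condition in $P_i$, these cubes cost at most $2\sum_i[l(P_i)]^\dz$.
\end{itemize}
Letting $\varepsilon\to0$ gives the estimate with constant $3$.

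To pass to functions, I apply this with $E=\{x\in\bigcup_vQ_{j_v}:|f(x)|>t\}$ and integrate in $t$. Interchanging the sum over $v$ with $\int_0^\fz dt$ is legitimate by Tonelli, because only scalar functions of $t$ are involved. Converting between $\ch_\fz^\dz$ and $\widetilde{\ch}_\fz^\dz$ via Remark \ref{12-4}(ii) then gives (ii) with $C$ depending only on $n$ and $\dz$.

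\textbf{Main obstacle.} The delicate step is making the greedy selection rigorous for an infinite family. I would handle it by observing that each acceptance decision depends only on finitely many earlier ones: within each of the finitely many ancestors of $Q_j$, only the already processed, strictly larger or equal-sized cubes matter. If that bookkeeping becomes awkward, I would instead run the construction on the finite subfamilies $\{Q_j\}_{j\le N}$ and pass to the limit, using Remark \ref{12-4}(iv) to recover $\ch_\fz^\dz(\bigcup_jQ_j)$ as the limit of the contents of the finite unions.
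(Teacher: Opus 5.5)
Your proof is correct and is essentially the standard argument behind the Adams lemma that the paper cites without proof: a greedy subfamily with the dyadic packing condition $\sum_{Q_{j_v}\subset R}[l(Q_{j_v})]^\dz\le 2[l(R)]^\dz$, maximal saturated cubes for (i), and, for (ii), splitting an almost optimal dyadic cover into cubes lying inside some $Q_{j_v}$ and cubes containing some $Q_{j_v}$. Two minor points: in (ii) the dyadic content must be taken with respect to the grid generated by $Q$, so that each $P_i$ is nested with or disjoint from each $Q_{j_v}$ and any $P_i\supseteq Q$ is handled by the packing in $Q$, with comparability to $\ch_\fz^\dz$ following from Remark \ref{12-4}(ii) by translation and dilation invariance; and your ``main obstacle'' is not really one, since ordering by decreasing size gives every cube only finitely many predecessors, so the selection is an ordinary recursion.
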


We now show Theorem \ref{prop-0916-1}.

\begin{proof}[Proof of Theorem \ref{prop-0916-1}]
We begin with showing that if $w\in\ca_{\fz,\dz}$, then $w\in\rh_{1,\dz}$.
Since $w\in\ca_{\fz,\dz}$, there exists $p\in[1,\fz)$ such that $w\in\ca_{p,\dz}$. By (iv) and  (vi) of Proposition \ref{prop-0901-1}, there exist positive constants $\gamma_{1}$ and $\gamma$ with $\gamma\leq \gamma_{1}$ such that $w^{\gamma+1}\in\ca_{p,\dz}$ and for any cube $Q\subset\rn$,
\begin{align}\label{eq-0903-1}
\lf[\fint_{Q}w(x)^{1+\gamma_1}\,d\ch_{\infty}^{\delta}\r]^{\frac{1}{1+\gamma_1}}\ls \fint_{Q}w(x)\,d\ch_{\infty}^{\delta}.
\end{align}
By H\"older's inequality, the last inequality remains valid if we replace $\gamma_1$ by $\gamma$. Using this, $w^{\gamma+1}\in\ca_{p,\dz}$ and Proposition \ref{prop-0901-1}(ii), we obtain
\begin{align*}
&\lf[\sum_{j\in\zz}\ch_{\fz}^{\dz}(  W_j\cap Q)\r]^{1-\frac{1}{1+\gamma}}\lf[\int_{2Q}w(x)^{1+\gamma}\,d\ch_{\fz}^{\dz}\r]^{\frac{1}{1+\gamma}}\ls \ch_{\fz}^{\dz}(Q)\lf[\fint_{Q}w(x)^{1+\gamma}\,d\ch_{\fz}^{\dz}\r]^{\frac{1}{1+\gamma}} \ls w_{\ch_{\infty}^{\delta}}(Q),
\end{align*}
which implies that $w\in\rh_{1+\gamma, \dz}\subset \rh_{1, \dz}$.

Conversely, we prove that if $w\in\rh_{1,\dz}$, then $w\in\ca_{\fz,\dz}$.
Since $w\in\rh_{1,\dz}$, there exists $q\in(1,\fz]$ such that $w\in\rh_{q,\dz}$. Notice that $\rh_{\fz,\dz}\subset \rh_{q,\dz}$ for any $q\in(1,\fz)$ due to Proposition \ref{prop-0902-1}(i).
We may assume $w\in\rh_{q,\dz}$ for $q\in(1,\fz)$ in what follows.
Then, for any cube $Q\subset\rn$, by \eqref{eq1113-5},
\begin{align*}
\ch_{\fz}^{\dz}(Q)^{1-\frac{1}{q}}\lf[\int_Q w(x)^q\,d\ch_{\fz}^{\dz}\r]^{\frac{1}{q}}\leq\lf[\sum_{j\in\zz}\ch_{\fz}^{\dz}(  W_j\cap Q)\r]^{1-\frac{1}{q}}\lf[\int_{2Q}w(x)^q\,d\ch_{\fz}^{\dz}\r]^{\frac{1}{q}}\le [w]_{\rh_{q,\dz}} w_{\ch_{\infty}^{\delta}}(Q).
\end{align*}
Taking the $\frac{q}{q-1}$-th power on both sides of the above inequality gives
\[\lf[\frac{1}{w_{\ch_{\fz}^{\dz}}(Q)}\ch_{\fz}^{\dz}(Q)\r]\lf[\frac{1}{w_{\ch_{\fz}^{\dz}}(Q)}\int_{Q}w(x)^q\,d\ch_{\fz}^{\dz}\r]^{\frac{1}{q-1}}\le [w]_{\rh_{q,\dz}}^{\frac{q}{q-1}}.\]
From this and Remark \ref{8rem27}, it follows that, for any cube $Q\subset\rn$,
\begin{align}\label{eq0904-1}
\lf[\frac{1}{w_{\ch_{\fz}^{\dz}}(Q)}\int_Q\frac{1}{w(x)}\,dw_{\ch_{\fz}^{\dz}}\r]\lf[\frac{1}{w_{\ch_{\fz}^{\dz}}(Q)}\int_{Q}w(x)^{q-1}\,dw_{\ch_{\fz}^{\dz}}\r]^{\frac{1}{q-1}}\leq [w]_{\rh_{q,\dz}}^{\frac{q}{q-1}},
\end{align}
which implies that $\frac{1}{w}\in\ca_{q', w_{\ch_{\fz}^{\dz}}}$ with $q':=\frac{q}{q-1}$ and $$\lf[\frac1w\r]_{\ca_{q', w_{\ch_{\fz}^{\dz}}}}\leq [w]_{\rh_{q,\dz}}^{q'}.$$ By this and Lemma \ref{lm-0904-1}, for any $\beta\in(0,[\frac1w]_{\ca_{q', w_{\ch_{\fz}^{\dz}}}}^{-1})$ and any cube $Q\subset\rn$,
\begin{align}\label{eq0904-3}
w_{\ch_{\infty}^{\delta}}\lf(\lf\{x\in Q:\ \frac{1}{w(x)}>\beta \lf(\frac1w\r)_{Q,w}\r\}\r)\geq \lf[1-\lf(\lf[\frac1w\r]_{\ca_{q', w_{\ch_{\fz}^{\dz}}}}\beta\r)^{q-1}\r]w_{\ch_{\infty}^{\delta}}(Q),
\end{align}
where $(\frac1w)_{Q,w}:=\fint_{Q}\frac{1}{w(x)}\,dw_{\ch_{\infty}^{\delta}}$.
Observe that $w$ satisfies \eqref{eq0902-3} and \eqref{eq0902-2} due to Proposition \ref{prop-0902-1}(ii). By Lemma \ref{lm-0904-5}, for any $\lambda\ge (\frac1w)_{Q,w}$, there exist pairwise disjoint dyadic subcubes $\{Q_j\}_{j\in\nn}$ of $Q$ and positive constants $C_1$ and $C_2$ such that
\begin{enumerate}
\item[{\rm(i)}]
for any $j\in\nn$,
$\lz<\fint_{Q_j}\frac{1}{w(x)}\,dw_{{\ch}_\fz^{\delta}}\le C_1\lambda;$
\item[{\rm(ii)}]
for ${\ch}_\fz^{\delta}$-almost every $x\in Q\backslash\cup_{j\in\nn}Q_j$, $\frac{1}{w(x)}\leq C_2\lambda$.
\end{enumerate}
By (ii), Remark \ref{8rem27}, Lemma \ref{lm-0904-4}, \eqref{eq1112-3}, (i), and \eqref{eq0904-3}, there exists a subfamily $\{Q_{j_v}\}_{v}$ of $\{Q_j\}_{j\in\nn}$ such that, for any
$\lambda\ge (\frac1w)_{Q,w}$,
\begin{align}\label{eq-0908-1}
&\lf(w^{-1}\r)_{w_{\ch_{\infty}^{\delta}}}\lf(\lf\{x\in Q: \frac{1}{w(x)}>C_2\lambda\r\}\r)\\
&\quad\leq \int_{\cup_{j\in\nn}Q_j}\frac{1}{w(x)}\,dw_{\ch_{\fz}^{\dz}}\ls \ch_{\fz}^{\dz}(\cup_{j\in\nn}Q_j)\noz\\
&\quad\ls \sum_{v}\ch_{\infty}^{\delta}(Q_{j_v})\ls \sum_{v}\int_{Q_{j_v}}\frac{1}{w(x)}\,dw_{\ch_{\fz}^{\dz}}
\ls \lambda \sum_{v}w_{\ch_{\infty}^{\delta}}(Q_{j_v})\noz\\
&\quad\ls \lambda\sum_{v}w_{\ch_{\infty}^{\delta}}\lf(\lf\{x\in Q_{j_v}:\ \frac{1}{w(x)}>\beta \lf(\frac1w\r)_{Q_{j_v}}\r\}\r)\noz\\
&\quad\ls \lambda\sum_{v}\int_{Q_{j_v}}
w(x)\mathbf{1}_{\{x\in Q:\ w(x)^{-1}>\beta\lambda\}}(x)\,d\ch_{\infty}^{\delta}\noz\\
&\quad\ls \lambda\int_{\cup_v{Q_{j_v}}}
w(x)\mathbf{1}_{\{x\in Q:\ w(x)^{-1}>\beta\lambda\}}(x)\,d\ch_{\infty}^{\delta}\ls\lambda w_{\ch_{\infty}^{\delta}}\lf(\lf\{x\in Q:\ \frac{1}{w(x)}>\beta\lambda\r\}\r),\noz
\end{align}
where $(w^{-1})_{w_{\ch_{\infty}^{\delta}}}$ is defined in \eqref{eq1111-6}.

Next, we claim that for $w\in\rh_{q,\dz}$ with $q\in(1,\fz)$ there exist positive constants $c_1$ and $c_2$ such that, for any cube $Q\subset\rn$ and $\gamma\in(0,1)$,
\begin{equation}\label{eq0908-1}
\int_{Q}w(x)^{-(1+\gamma)}\,dw_{\ch_{\fz}^{\dz}}\leq c_1\gamma \int_Q w(x)^{-(1+\gamma)}\,dw_{\ch^{\delta}_{\infty}}+c_2 \lf(\frac1w\r)_{Q,w}^{1+\gamma}w_{\ch_{\fz}^{\dz}}(Q).
\end{equation}
To show this, we first prove that, for any cube $Q\subset\rn$ and $\gamma\in(0,1)$,
\begin{equation}\label{eq0908-2}
\int_{F_1}\lf\{[C_2w(x)]^{-\gamma}-\lf(\frac1w\r)_{Q,w}^{\gamma}\r\}w(x)^{-1}\,dw_{\ch_{\fz}^{\dz}}\ls\gamma \int_Qw(x)^{-(1+\gamma)}\,dw_{\ch^{\delta}_{\infty}},
\end{equation}
where $F_1:=\{x\in Q: \frac1{w(x)}>C_2\lf(\frac1w\r)_{Q,w}\}$.
Indeed, by \eqref{eq1112-3'} and choosing $\beta=\frac{1}{2[\frac1w]_{\ca_{q', w_{\ch_{\fz}^{\dz}}}}}$ in \eqref{eq-0908-1}, we get
\begin{align*}
&\int_{F_1}\lf\{[C_2w(x)]^{-\gamma}-\lf(\frac1w\r)_{Q,w}^{\gamma}\r\}w(x)^{-1}\,dw_{\ch_{\fz}^{\dz}}\\
&\hs \ls \int_{F_1}[C_2w(x)]^{-\gamma}-\lf(\frac1w\r)_{Q,w}^{\gamma}\,d(w^{-1})_{w_{\ch_{\infty}^{\delta}}}\\
&\hs\sim \int_{0}^{\infty}\lf(w^{-1}\r)_{w_{\ch_{\infty}^{\delta}}}{\lf\{x\in F_1:\ [C_2w(x)]^{-\gamma}-\lf(\frac1w\r)_{Q,w}^{\gamma}>t\r\}}\,dt\\
&\hs \sim\gamma\int_{\lf(\frac1w\r)_{Q,w}}^{\infty}t^{\gamma-1}\lf(w^{-1}\r)_{w_{\ch_{\infty}^{\delta}}}{\lf\{x\in F_1:\ [C_2w(x)]^{-\gamma}-\lf(\frac1w\r)_{Q,w}^{\gamma}>t^{\gamma}-\lf(\frac1w\r)_{Q,w}^{\gamma}\r\}}\,dt\\
&\hs \sim\gamma\int_{\lf(\frac1w\r)_{Q,w}}^{\infty}t^{\gamma-1}\lf(w^{-1}\r)_{w_{\ch_{\infty}^{\delta}}}\{x\in Q: w(x)^{-1}>C_2t\}\,dt\\
&\hs \ls\gamma\int_{\lf(\frac1w\r)_{Q,w}}^{\infty}t^{\gamma}w_{\ch_{\infty}^{\delta}}\lf\{x\in Q: \frac{1}{w(x)}>\frac{t}{2[\frac1w]_{\ca_{q', w_{\ch_{\fz}^{\dz}}}}}\r\}\,dt \ls \gamma\int_{Q}w(x)^{-(1+\gamma)}\,dw_{\ch^{\delta}_{\infty}},
\end{align*}
which yields \eqref{eq0908-2}.

In addition, from (i) and (iii) of Remark \ref{rem-11112-1} and the definition of $F_1$, we infer that, for any $Q\subset \rn$,
\begin{align}\label{eq-0911-1}
&\int_{Q}w(x)^{-(1+\gamma)}\,dw_{\ch^{\delta}_{\infty}}\leq \int_{Q\backslash F_1}w(x)^{-(1+\gamma)}\,dw_{\ch^{\delta}_{\infty}}+\int_{F_1}w(x)^{-(1+\gamma)}\,dw_{\ch^{\delta}_{\infty}}\\
&\hs \ls \lf(\frac1w\r)_{Q,w}^{1+\gamma}w_{\ch_{\infty}^{\delta}}(Q)+\int_{F_1}\lf\{[C_2w(x)]^{-\gamma}-\lf(\frac1w\r)_{Q,w}^{\gamma}\r\}\frac1{w(x)}\,dw_{\ch_{\infty}^{\delta}}
+\int_{F_1}\lf(\frac1w\r)_{Q,w}^{\gamma}\frac1{w(x)}\,dw_{\ch_{\infty}^{\delta}}\noz\\
&\hs \ls\int_{F_1}\lf\{[C_2w(x)]^{-\gamma}-\lf(\frac1w\r)_{Q,w}^{\gamma}\r\}\frac1{w(x)}\,dw_{\ch_{\infty}^{\delta}}+ \lf(\frac1w\r)_{Q,w}^{1+\gamma}w_{\ch_{\infty}^{\delta}}(Q),\noz
\end{align}
which, combined with \eqref{eq0908-2} and \eqref{eq-0911-1}, further implies \eqref{eq0908-1}. Thus, the claim is proved.

For any $N\in(0,\fz)$, define
\begin{align}\label{9eq5}
w_N(x):=
\begin{cases}
w(x),\quad &x\in G_1:=\{x\in\rn: w(x)\ge N\},\\
N,\quad &x\in G_2:=\{x\in\rn: w(x)< N\}.
\end{cases}
\end{align}
For any $j\in\zz$, set
\[W_{N,j}:=\lf\{x\in\rn: 2^{j-1}<w_N(x)\leq 2^j\r\}.\]
Let $N_0\in \zz$ satisfy $2^{N_0-1}<N\leq 2^{N_0}$. Then $W_{N,j}=\emptyset$ if $j\leq N_0-1$, $W_{N,j}=G_2\cup(G_1\cap w_{N_0})$ if $j=N_0$, and $W_{N,j}=  W_j$ if $j\ge N_0+1$. Moreover,
\[G_2\cup(G_1\cap w_{N_0})=\lf(\bigcup_{j\in (-\fz, N_0]\cap \zz}   W_j\r)\cup\lf\{x\in\rn: w(x)=0\r\}.\]
Thus, for any cube $Q\subset\rn$,
\begin{align}\label{eq831-1}
\sum_{j\in\zz}\ch_{\fz}^{\dz}(W_{N,j}\cap Q)\leq\sum_{j\in\zz}\ch_{\fz}^{\dz}(w_{j}\cap Q).
\end{align}
From this, Remark \ref{rem-11112-1}(iv), and \eqref{eq0902-2}, it follows that, for any cube $Q\subset\rn$,
\begin{align*}
&\lf[\sum_{j\in\zz}\ch_{\fz}^{\dz}(W_{N,j}\cap Q)\r]^{1-\frac{1}{q}}\lf[\int_{2Q}(w_N)(x)^{q}\,d\ch_{\fz}^{\dz}\r]^{\frac{1}{q}}\\
&\quad\leq\lf[\sum_{j\in\zz}\ch_{\fz}^{\dz}(w_{j}\cap Q)\r]^{1-\frac{1}{q}}\lf[\int_{2Q}\bigl(w(x)^{q}+N^q\bigr)\,d\ch_{\fz}^{\dz}\r]^{\frac{1}{q}}\\
&\quad\leq \lf[\sum_{j\in\zz}\ch_{\fz}^{\dz}(w_{j}\cap Q)\r]^{1-\frac{1}{q}}\lf[\int_{2Q }w(x)^{q}\,d\ch_{\fz}^{\dz}\r]^{\frac{1}{q}}+\lf[\sum_{j\in\zz}\ch_{\fz}^{\dz}(w_{j}\cap Q)\r]^{1-\frac{1}{q}}\lf[\int_{2Q}N^q\,d\ch_{\fz}^{\dz}\r]^{\frac{1}{q}}\\
&\quad \ls\int_{Q}w(x)\,d\ch_{\fz}^{\dz}+\int_{Q}N\,d\ch_{\fz}^{\dz}\ls\int_{Q}w_{N}(x)\,d\ch_{\fz}^{\dz},
\end{align*}
which implies that $w_N\in\rh_{q,\dz}$ with a constant independent of $N$.
Thus, for this $w_N$, there exist positive constants $c_1$ and $c_2$ such that \eqref{eq0908-1} holds.
By Remark \ref{8rem27},
\[\int_Q w_N(x)^{-(1+\gamma)}\,d(w_N)_{\ch^{\delta}_{\infty}}\ls \int_Q w_N(x)^{-\gamma}\,d\ch^{\delta}_{\infty}\ls N^{-\gamma}\ch^{\delta}_{\infty}(Q)< \fz.\]
Applying \eqref{eq1112-3}, rearranging \eqref{eq0908-1} and choosing $\gamma\in (0,1)$ such that $c_1\gamma<1$, and using Remark \ref{8rem27}, we obtain
\[\int_{Q}w_N(x)^{-\gamma}\,d\ch_{\fz}^{\dz}
\ls\int_Q w_N(x)^{-(1+\gamma)}\,d(w_N)_{\ch^{\delta}_{\infty}}\ls \frac{\ch_{\fz}^{\dz}(Q)^{1+\gamma}}{(w_N)_{\ch_{\fz}^{\dz}}(Q)^{\gamma}}\ls \frac{\ch_{\fz}^{\dz}(Q)^{1+\gamma}}{w_{\ch_{\fz}^{\dz}}(Q)^{\gamma}}.\]
Taking $N:=2^{-m}$ with $m\in\zz$ in \eqref{9eq5} and $w_m(x):=w_{2^{-m}}(x)$, then $0\leq (w_m)^{-\gamma}\uparrow w^{-\gamma}$ as $m\to\fz$. By (ii) and (iv) of Remark \ref{12-4} and letting $m\to\fz$, we deduce
\[\int_{Q}w(x)^{-\gamma}\,d\ch_{\fz}^{\dz}\le \int_{Q}w(x)^{-\gamma}\,d\widetilde\ch_{\fz}^{\dz}= \lim_{m\to\fz}\int_{Q}w_m(x)^{-\gamma}\,d\widetilde\ch_{\fz}^{\dz}\ls \frac{\ch_{\fz}^{\dz}(Q)^{1+\gamma}}{w_{\ch_{\fz}^{\dz}}(Q)^{\gamma}}.\]
This is equivalent to
\[\lf\{\fint_Qw(x)\,d\mathcal H^{\delta}_\infty\r\}\lf\{\fint_Qw(x)^{-\gamma}\,d\mathcal H^{\delta}_\infty\r\}^{\frac{1}{\gamma}}\ls 1,\]
which implies that $w\in\ca_{\frac{\gamma+1}{\gamma}, \dz}\subset\ca_{\fz,\dz}$.
This completes the proof of Theorem \ref{prop-0916-1}.
\end{proof}

Besides Theorem \ref{prop-0916-1}, the following lemma also gives a key relation on these two concepts.

\begin{lem}\label{lm0922-1}
Let $s\in[1,\fz)$ and $q\in [1,\fz]$. Then $w\in\ca_{q, \dz}\cap\rh_{s, \dz}$ if and only if $w^s\in\ca_{s(q-1)+1, \dz}$.
\end{lem}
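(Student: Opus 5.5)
The proof reduces everything to the averaged reverse H\"older inequality \eqref{8e26}, which by Remark \ref{09re13} characterizes $\rh_{s,\dz}$ for $s\in(1,\fz)$, together with H\"older's inequality for Choquet integrals (Remark \ref{rem-11112-1}(ii)). Write $r:=s(q-1)+1$. The identity driving the argument is that for $q\in(1,\fz)$ one has $r-1=s(q-1)$, hence
\[(w^s)^{-\frac1{r-1}}=w^{-\frac1{q-1}}.\]
So the $\ca_{r,\dz}$ characteristic of $w^s$ equals
\[\Bigl\{\fint_Q w^s\,d\ch_\fz^\dz\Bigr\}\Bigl\{\fint_Q w^{-\frac1{q-1}}\,d\ch_\fz^\dz\Bigr\}^{s(q-1)}.\]
This is the $s$-th power of the $\ca_{q,\dz}$ expression for $w$, except that $(\fint_Q w)^s$ is replaced by $\fint_Q w^s$. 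The case $s=1$ is trivial, since $\rh_{1,\dz}=\ca_{\fz,\dz}\supset\ca_{q,\dz}$ by Theorem \ref{prop-0916-1}. I will therefore take $s\in(1,\fz)$ and first treat $q\in[1,\fz)$.

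\emph{Forward direction}, $q\in(1,\fz)$. By \eqref{8e26}, $\fint_Q w^s\le [w]_{\rh_{s,\dz}}^s(\fint_Q w)^s$. Substituting this into the displayed expression bounds it by $[w]_{\rh_{s,\dz}}^s[w]_{\ca_{q,\dz}}^s$.

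\emph{Forward direction}, $q=1$. Here $r=1$, and the same bound combined with $\|w^{-s}\|_{L^\fz(Q,\ch_\fz^\dz)}=\|w^{-1}\|_{L^\fz(Q,\ch_\fz^\dz)}^s$ gives $w^s\in\ca_{1,\dz}$.

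\emph{Converse}, $q\in(1,\fz)$. Suppose $w^s\in\ca_{r,\dz}$.
\begin{itemize}
\item[(a)] $w\in\ca_{q,\dz}$. Choquet H\"older with exponents $s,s'$ gives $\fint_Q w\le 2(\fint_Q w^s)^{1/s}$. Hence the $\ca_{q,\dz}$ expression for $w$ is at most $2[w^s]_{\ca_{r,\dz}}^{1/s}$.
\item[(b)] $w\in\rh_{s,\dz}$. Apply H\"older with exponents $q,q'$ to $1=w^{1/q}w^{-1/q}$ on $Q$. This gives
\[1\le 2\Bigl(\fint_Q w\Bigr)^{\frac1q}\Bigl(\fint_Q w^{-\frac1{q-1}}\Bigr)^{\frac{q-1}q},\]
that is, $(\fint_Q w^{-1/(q-1)})^{-(q-1)}\le 2^q\fint_Q w$. (This uses $\fint_Q 1\,d\ch_\fz^\dz=1$.) From the $\ca_{r,\dz}$ condition,
\[\fint_Q w^s\le [w^s]_{\ca_{r,\dz}}\Bigl(\fint_Q w^{-\frac1{q-1}}\Bigr)^{-s(q-1)}\le 2^{qs}[w^s]_{\ca_{r,\dz}}\Bigl(\fint_Q w\Bigr)^s.\]
This is \eqref{8e26} with exponent $s$, so Remark \ref{09re13} gives $w\in\rh_{s,\dz}$.
\end{itemize}

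\emph{Converse}, $q=1$. Membership $w\in\ca_{1,\dz}$ again follows from H\"older. For the reverse H\"older bound, use that $w\ge t$ $\ch_\fz^\dz$--a.e.\ on $Q$ implies $\fint_Q w\ge t$. This gives $\|w^{-1}\|_{L^\fz(Q)}^{-1}\le\fint_Q w$, and therefore $\fint_Q w^s\le[w^s]_{\ca_{1,\dz}}(\fint_Q w)^s$.

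\emph{The case $q=\fz$} (so $r=\fz$) follows by taking unions. If $w\in\ca_{\fz,\dz}\cap\rh_{s,\dz}$, then $w\in\ca_{q_0,\dz}$ for some finite $q_0$, so $w^s\in\ca_{s(q_0-1)+1,\dz}\subset\ca_{\fz,\dz}$. Conversely, if $w^s\in\ca_{\fz,\dz}$, then $w^s\in\ca_{r_0,\dz}$ for some finite $r_0\ge1$. Setting $q_0:=(r_0-1)/s+1$, the finite case yields $w\in\ca_{q_0,\dz}\cap\rh_{s,\dz}$.

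I expect the main obstacle to be the non-additivity of the Choquet integral, which is dealt with by routing every step through only three tools: Choquet H\"older (constant $2$), the trivial bound $\fint_Q 1=1$, and Remark \ref{09re13}. The awkward $\sum_j\ch_\fz^\dz(W_j\cap Q)$ normalization in Definition \ref{rh} then never has to be handled directly, so the only real check is that each H\"older application is legitimate on the cube $Q$ itself.
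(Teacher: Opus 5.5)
Your case split and every H\"older computation match the paper's proof. This covers the forward directions via \eqref{8e26}, part (a), the bound $(\fint_Q w^{-1/(q-1)}\,d\ch_\fz^\dz)^{-(q-1)}\le 2^q\fint_Q w\,d\ch_\fz^\dz$ from $1=w^{1/q}w^{-1/q}$, the $q=1$ converse, and the union argument for $q=\fz$.

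\textbf{The gap.} It is the final step of both converse subcases. There you turn the averaged inequality $(\fint_Q w^s\,d\ch_\fz^\dz)^{1/s}\ls\fint_Q w\,d\ch_\fz^\dz$ into $w\in\rh_{s,\dz}$ by citing Remark \ref{09re13}. That remark cannot carry this step, for two reasons.

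First, its justification uses \eqref{eq0902-3} and \eqref{eq0902-2} from Proposition \ref{prop-0902-1}(ii). Those are proved only for weights already known to lie in some $\rh_{p,\dz}$, so the argument is circular.

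Second, the implication itself fails when $n\ge 2$ and $\dz\le n-1$. Let $w:=2^j$ on the hyperplane $\{x_1=N+\frac{j}{N+1}\}$ for $N\in\nn$ and $1\le j\le N$, and $w:=1$ elsewhere.
\begin{itemize}
\item Each cube $Q$ meets only finitely many of these hyperplanes.
\item Each nonempty section $\{x_1=c\}\cap Q$ has $\dz$-content $l(Q)^\dz=\ch_\fz^\dz(Q)$. To see this, project any cover onto $\{x_1=c\}$ and use $\dz\le n-1$.
\item Hence $\int_Q w^t\,d\ch_\fz^\dz=(\max_Q w)^t\,\ch_\fz^\dz(Q)$ for every $t>0$, so \eqref{8e26} holds with constant $1$.
\item On $Q_N:=(N,N+1]\times(0,1]^{n-1}$ one has $\sum_{j\in\zz}\ch_\fz^\dz(W_j\cap Q_N)\ge N\,\ch_\fz^\dz(Q_N)$. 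So \eqref{eq0902-2} fails and $w\notin\rh_{s,\dz}$.
\end{itemize}
The normalization $\sum_j\ch_\fz^\dz(W_j\cap Q)$, which you planned never to handle, is exactly what separates $\rh_{s,\dz}$ from the bare reverse H\"older inequality over Hausdorff content.

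\textbf{The repair.} It uses only what you have already proved, and it is what the paper does.
\begin{itemize}
\item Part (a) gives $w\in\ca_{q,\dz}$ (resp.\ $w\in\ca_{1,\dz}$ when $q=1$). Then \eqref{eq1111-2} yields $\sum_j\ch_\fz^\dz(W_j\cap Q)\ls\ch_\fz^\dz(Q)$. The level-set conventions there and in Definition \ref{rh} change the sum by at most a factor $2$.
\item Applying \eqref{eq1111-1} to the weight $w^s\in\ca_{s(q-1)+1,\dz}$ yields $\int_{2Q}w^s\,d\ch_\fz^\dz\ls\int_Q w^s\,d\ch_\fz^\dz$.
\end{itemize}
Combined with your averaged bound, this gives
\[
\Bigl[\sum_{j\in\zz}\ch_\fz^\dz(W_j\cap Q)\Bigr]^{1-\frac1s}\Bigl[\int_{2Q}w^s\,d\ch_\fz^\dz\Bigr]^{\frac1s}\ls\ch_\fz^\dz(Q)\Bigl[\fint_Q w^s\,d\ch_\fz^\dz\Bigr]^{\frac1s}\ls\ch_\fz^\dz(Q)\fint_Q w\,d\ch_\fz^\dz=w_{\ch_\fz^\dz}(Q),
\]
which is the defining condition of $\rh_{s,\dz}$. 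With this step inserted in both converse subcases, your proof is complete.
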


\begin{proof}
We prove the lemma by considering four cases that cover all possible values of $s$ and $q$.

\textbf{Case 1)} $s=1$ and $q\in [1, \fz]$. The lemma holds trivially by Theorem \ref{prop-0916-1}.

\textbf{Case 2)} $s\in(1,\fz)$ and $q=1$. If $w\in\ca_{1, \dz}\cap\rh_{s, \dz}$, then by \eqref{8e26} for any cube $Q$ of $\rn$, we have
\[\fint_Qw(y)^s\,d\mathcal H^{\delta}_\infty\ls \lf[{\fint_Qw(y)\,d\ch_{\fz}^{\dz}}\r]^{s}\ls w(x)^s,\ \text{for}\ \ch_\fz^\delta{\rm-a.\, e.} \ x\in Q.\]
This implies $w^s\in\ca_{1,\dz}$.

Conversely, if $w^s\in\ca_{1,\dz}$, then applying H\"{o}lder's inequality, we obtain,
for any cube $Q$ of $\rn$, 
\[\fint_Qw(y)\,d\mathcal H^{\delta}_\infty\ls \lf[{\fint_Qw(y)^s\,d\ch_{\fz}^{\dz}}\r]^{\frac1s}\ls w(x),\ \text{for}\ \ch_\fz^\delta{\rm-a.\, e.} \ x\in Q,\]
which yields $w\in\ca_{1,\dz}$. Moreover, by \eqref{eq1111-1}, \eqref{eq1111-2},  and $w^s\in\ca_{1,\dz}$, for $\ch_\fz^\delta{\rm-a.\, e.} \ x\in Q$, we get
\[\lf[\sum_{j\in\zz}\ch_{\fz}^{\dz}(  W_j\cap Q)\r]^{1-\frac{1}{s}}\lf[\int_{2Q}w(x)^s\,d\ch_{\fz}^{\dz}\r]^{\frac{1}{s}}\ls\ch_{\fz}^{\dz}(Q) \lf[{\fint_Qw(x)^s\,d\ch_{\fz}^{\dz}}\r]^{\frac1s}\ls\ch_{\fz}^{\dz}(Q) w(x).\]
Taking the Choquet integral of both sides over $Q$, we immediately conclude $w\in\rh_{s,\dz}$.

\textbf{Case 3)} $s\in(1,\fz)$ and $q\in(1,\fz)$. If $w\in\ca_{q, \dz}\cap\rh_{s, \dz}$, then by \eqref{8e26}, for any cube $Q$ of $\rn$, we have
\begin{align*}
&\lf[\fint_Qw(x)^s\,d\mathcal H^{\delta}_\infty\r]\lf[\fint_Qw(x)^{-\frac{1}{q-1}}\,d\mathcal H^{\delta}_\infty\r]^{s(q-1)}\ls \lf[\fint_Qw(x)\,d\mathcal H^{\delta}_\infty\r]^{s}\lf[\fint_Qw(x)^{-\frac{1}{q-1}}\,d\mathcal H^{\delta}_\infty\r]^{s(q-1)}\ls 1,
\end{align*}
which implies $w^s\in\ca_{s(q-1)+1, \dz}$.

On the other hand, if $w^s\in\ca_{s(q-1)+1, \dz}$,
then applying H\"{o}lder's inequality, we find that, for any cube $Q$ of $\rn$,
\begin{align*}
&\lf[\fint_Qw(x)\,d\mathcal H^{\delta}_\infty\r]\lf[\fint_Qw(x)^{-\frac{1}{q-1}}\,d\mathcal H^{\delta}_\infty\r]^{(q-1)}\leq 2\lf[\fint_Qw(x)^s\,d\mathcal H^{\delta}_\infty\r]^{\frac{1}{s}}\lf[\fint_Qw(x)^{-\frac{1}{q-1}}\,d\mathcal H^{\delta}_\infty\r]^{(q-1)}\ls 1,
\end{align*}
which yields $w\in\ca_{q, \dz}$.

In addition, applying H\"{o}lder's inequality, we get
\[\ch_{\fz}^{\dz}(Q)^q=\lf[\int_{Q}w(x)^{\frac1q}w(x)^{-\frac1q}\,d\ch_{\fz}^{\dz}\r]^q\leq 2^q\lf[\int_Qw(x)\,d\mathcal H^{\delta}_\infty\r]\lf[\int_Qw(x)^{-\frac{1}{q-1}}\,d\mathcal H^{\delta}_\infty\r]^{q-1}.\]
Combining this with \eqref{eq1111-2}, \eqref{eq1111-1}, and $w^s\in\ca_{s(q-1)+1, \dz}$, we conclude that, for any cube $Q$ of $\rn$,
\begin{align*}
&\lf[\sum_{j\in\zz}\ch_{\fz}^{\dz}(  W_j\cap Q)\r]^{1-\frac{1}{s}}\lf[\int_{2Q}w(x)^s\,d\ch_{\fz}^{\dz}\r]^{\frac{1}{s}}\ls\ch_{\fz}^{\dz}(Q)^{1-\frac1s} \lf[\int_Qw(x)^s\,d\ch_{\fz}^{\dz}\r]^{\frac1s}\\
&\quad\ls\ch_{\fz}^{\dz}(Q)^q\lf[\fint_Qw(x)^s\,d\ch_{\fz}^{\dz}\r]^{\frac1s}\lf[\fint_Qw(x)^{-\frac{1}{q-1}}\,d\ch_{\fz}^{\dz}\r]^{q-1}\lf[\int_Qw(x)^{-\frac{1}{q-1}}\,d\ch_{\fz}^{\dz}\r]^{1-q}\\
&\quad \ls \ch_{\fz}^{\dz}(Q)^q\lf[\int_Qw(x)^{-\frac{1}{q-1}}\,d\ch_{\fz}^{\dz}\r]^{1-q}\ls \int_Qw(x)\,d\mathcal H^{\delta}_\infty.
\end{align*}
Thus, $w\in\rh_{s,\dz}$.

\textbf{Case 4)} $s\in(1,\fz)$ and $q=\fz$. If $w\in\ca_{\fz, \dz}\cap\rh_{s, \dz}$, then by the definition of $\ca_{\fz, \dz}$, there exists $p\in[1, \fz)$ such that $w\in\ca_{p, \dz}\cap\rh_{s, \dz}$. From Case 2 if $p=1$ or Case 3 if $p\in(1,\fz)$, we conclude $w^s\in\ca_{s(p-1)+1, \dz}\subset\ca_{\fz,\dz}$.

Conversely, if $w^s\in\ca_{\fz, \dz}$, then there exists $p\in[1, \fz)$ such that $w^s\in\ca_{p, \dz}$. Let $\widetilde{p}:=\frac{p-1}{s}+1$, so $w^s\in\ca_{s(\widetilde{p}-1)+1, \dz}$. Applying Case 2 if $\widetilde{p}=1$ or Case 3 if $\widetilde{p}\in(1,\fz)$, we get $$w\in\ca_{\widetilde{p},\dz}\cap\rh_{s,\dz}\subset\ca_{\fz,\dz}\cap\rh_{s,\dz}.$$
Combining the above four cases, Lemma \ref{lm0922-1} holds true.
\end{proof}

\subsection{Self-Improving Properties of $\rh_{q,\delta}$}\label{s11.11-8}

From Proposition \ref{prop-0902-1}(i), we dedeuce that $\rh_{q,\dz}\subset\rh_{p,\dz}$ for any $p<q$. More essentially, in this subsection, we prove that the exponent $q$ can also be improved, meaning that the weight class $\rh_{q,\delta}$ has the self-improving property or the so-called openness property as follows.

\begin{thm}\label{lm-0921-1}
Let $w\in \rh_{q,\delta}$ with $q\in(1,\infty)$. Then there exists $\varepsilon>0$
such that $w\in \rh_{q+\varepsilon,\delta}$.
\end{thm}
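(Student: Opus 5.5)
The plan is to reduce the openness of $\rh_{q,\dz}$ to openness properties of capacitary Muckenhoupt weights, using Lemma \ref{lm0922-1} and Proposition \ref{prop-0901-1}. The point is that Lemma \ref{lm0922-1} turns statements about $\rh_{s,\dz}$ into statements about $\ca_{p,\dz}$, where Gehring-type improvements (Proposition \ref{prop-0901-1}(iv),(vi)) are already available from \cite{HZZ25}.

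\textbf{Step 1 (place $w$ in some $\ca_{p,\dz}$).} Since $w\in\rh_{q,\dz}\subset\rh_{1,\dz}$, Theorem \ref{prop-0916-1} gives $w\in\ca_{\fz,\dz}$. So there is $p\in[1,\fz)$ with $w\in\ca_{p,\dz}\cap\rh_{q,\dz}$.

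\textbf{Step 2 (pass to a power).} Lemma \ref{lm0922-1}, with $s=q$, then yields $v:=w^q\in\ca_{r,\dz}$, where $r:=q(p-1)+1\in[1,\fz)$.

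\textbf{Step 3 (improve the power).} Proposition \ref{prop-0901-1}(vi) gives $\gamma_1>0$ with $v^{1+\gamma_1}=w^{q(1+\gamma_1)}\in\ca_{r,\dz}$. Set $q+\varepsilon:=q(1+\gamma_1)$, so $\varepsilon=q\gamma_1$. We also use Proposition \ref{prop-0901-1}(i) to enlarge the exponent: $w^{q+\varepsilon}\in\ca_{r',\dz}$ for any $r'\ge r$.

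\textbf{Step 4 (return via Lemma \ref{lm0922-1}).} We apply the converse direction of Lemma \ref{lm0922-1} with $s=q+\varepsilon$. This requires writing the $\ca$ exponent of $w^{q+\varepsilon}$ in the form $(q+\varepsilon)(\tilde p-1)+1$ with $\tilde p\in[1,\fz)$. Taking $\tilde p:=\frac{r-1}{q+\varepsilon}+1$ works, and $\tilde p\ge1$ automatically since $r\ge 1$. The lemma then gives $w\in\ca_{\tilde p,\dz}\cap\rh_{q+\varepsilon,\dz}$, which is the claim.

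This is essentially Case 4 of the proof of Lemma \ref{lm0922-1}: we only need $w^{q+\varepsilon}\in\ca_{\fz,\dz}$. In fact Step 3 can be weakened to that, since Lemma \ref{lm0922-1} with $q=\fz$ directly states that $w\in\ca_{\fz,\dz}\cap\rh_{s,\dz}$ if and only if $w^s\in\ca_{\fz,\dz}$.

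\textbf{Main obstacle.} The delicate step is ensuring that Proposition \ref{prop-0901-1}(vi) applies with the $\ca$-class of $w^q$. This rests on the capacitary Gehring inequality (iv) for $\ca_{r,\dz}$ weights, together with (iv) for the dual weight $v^{-1/(r-1)}\in\ca_{r/(r-1),\dz}$, through Proposition \ref{prop-0901-1}(iii). All of this is taken as given from earlier results. The only check needed is that $v=w^q$ is a genuine $\ch_\fz^\dz$-capacitary weight, i.e. $v\in L^1_{\loc}(\rn,\ch_\fz^\dz)$ and $0<v<\fz$ a.e. Local integrability follows from \eqref{8e26}, and positivity and finiteness are inherited from $w$.

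If one prefers to avoid Lemma \ref{lm0922-1}, the fallback is to argue directly. Apply Proposition \ref{prop-0901-1}(iv) to $w^q\in\ca_{r,\dz}$ to get
\[
\lf(\fint_Q w^{q(1+\gamma)}\,d\ch_\fz^\dz\r)^{\frac1{q(1+\gamma)}}\ls\lf(\fint_Q w^q\,d\ch_\fz^\dz\r)^{\frac1q}\ls\fint_Q w\,d\ch_\fz^\dz,
\]
using \eqref{8e26} for the last step. Remark \ref{09re13} then identifies this reverse H\"older inequality with membership in $\rh_{q(1+\gamma),\dz}$.
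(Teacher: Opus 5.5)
Your argument is correct, but it takes a different route from the paper's. The paper's proof has three stages:
\begin{enumerate}
\item[(a)] It works through the weighted capacity $w_{\ch_\fz^\dz}$, noting via Remark \ref{8rem27} and \eqref{8e26} that $w^{q-1}\in\ca_{q,w_{\ch_\fz^\dz}}$.
\item[(b)] It proves a new Gehring-type inequality relative to the non-additive capacity $w_{\ch_\fz^\dz}$. This is Lemma \ref{lm-0919-1}, which needs the covering Lemmas \ref{lm-0917-3}, \ref{lm-0917-1}, \ref{lm-0917-4} and a truncation $g_m=\min\{g,m\}$.
\item[(c)] It converts back with Remark \ref{8rem29}, obtaining $w\in\rh_{p,\dz}$ with $p=(q-1)(1+\gamma)+1$.
\end{enumerate}

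You never leave the unweighted setting. The power characterization of Lemma \ref{lm0922-1} sends $w\in\ca_{p,\dz}\cap\rh_{q,\dz}$ to $w^q\in\ca_{r,\dz}$. The self-improvement of $\ca_{r,\dz}$ in Proposition \ref{prop-0901-1}(iv),(vi) then raises the power. Finally, the converse direction of Lemma \ref{lm0922-1} brings you back with $\varepsilon=q\gamma_1$. There is no circularity: the cases $s>1$ of that lemma use only H\"older's inequality, \eqref{8e26} and Proposition \ref{prop-0901-1}(ii), and Theorem \ref{prop-0916-1} is proved independently of the present theorem.

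Your fallback is also sound. Invoking Remark \ref{09re13} is legitimate here because $w$ is already in $\rh_{q,\dz}$, so the bounds \eqref{eq0902-2} and \eqref{eq0902-3} that the remark's argument relies on are available. It does still use the forward direction $w^q\in\ca_{r,\dz}$, but that is a one-line consequence of \eqref{8e26} and $w\in\ca_{p,\dz}$.

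What your route buys is economy. It shows that the weighted reverse H\"older inequality of Lemma \ref{lm-0919-1}, which the paper presents as the delicate ingredient, is not needed for openness once Lemma \ref{lm0922-1} and Proposition \ref{prop-0901-1}(vi) are in hand. What the paper's route buys is Lemma \ref{lm-0919-1} itself, a Gehring lemma for $\ca_{q,w_{\ch_\fz^\dz}}$ weights over the capacity $w_{\ch_\fz^\dz}$, which has independent interest.
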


We first show the following reverse H\"older inequality associated with $w_{\ch_{\fz}^{\dz}}$, which is a key observation to establish Theorem \ref{lm-0921-1}.

\begin{lem}\label{lm-0919-1}
Let $w\in\ca_{p,\dz}$ with $p\in[1,\fz)$ and $g\in\ca_{q, w_{\ch_{\fz}^{\dz}}}$ with $q\in(1,\fz)$. Then there exist positive constants $K$ and $\gamma\in(0,1)$ such that, for every cube $Q$ of $\rn$,
\begin{align*}
\lf[\fint_{Q}g(x)^{1+\gamma}\,dw_{\ch_{\infty}^{\delta}}\r]^{\frac{1}{1+\gamma}}\leq K\fint_{Q}g(x)\,dw_{\ch_{\infty}^{\delta}}.
\end{align*}
\end{lem}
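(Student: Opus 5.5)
We will transplant, to the capacity $w_{\ch_{\fz}^{\dz}}$, the Calder\'on--Zygmund-type argument used in the proof of Theorem \ref{prop-0916-1} for $\frac1w\in\ca_{q',w_{\ch_{\fz}^{\dz}}}$, replacing $\frac1w$ by a general $g\in\ca_{q,w_{\ch_{\fz}^{\dz}}}$. Since $w\in\ca_{p,\dz}$, Proposition \ref{prop-0901-1}(ii) gives the doubling property \eqref{eq1111-1} and the bound \eqref{eq1111-2}. These are exactly the hypotheses \eqref{eq0902-3} and \eqref{eq0902-2} needed in Lemma \ref{lm-0904-5}. By Remark \ref{8rem29} we may also pass freely between $\int f\,dw_{\ch_{\fz}^{\dz}}$ and $\int f w\,d\ch_{\fz}^{\dz}$.

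\textbf{Step 1: a good distribution estimate.} Fix a cube $Q$ and $\lambda\ge g_{Q,w}:=\fint_Q g\,dw_{\ch_{\fz}^{\dz}}$. Lemma \ref{lm-0904-5} applied to $F=g$ yields pairwise disjoint dyadic cubes $\{Q_j\}$ with $\lambda<g_{Q_j,w}\le C_1\lambda$ and $g\le C_2\lambda$ $\ch_{\fz}^{\dz}$-a.e.\ off $\bigcup_j Q_j$. Lemma \ref{lm-0904-4} provides a subfamily $\{Q_{j_v}\}$. Lemma \ref{lm-0904-1}, with $\beta=\frac1{2[g]_{\ca_{q,w_{\ch_{\fz}^{\dz}}}}}$, gives $w_{\ch_{\fz}^{\dz}}(Q_{j_v})\ls w_{\ch_{\fz}^{\dz}}(\{x\in Q_{j_v}:g>\beta\lambda\})$. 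Combining these as in \eqref{eq-0908-1}, we aim for
\[
g_{w_{\ch_{\fz}^{\dz}}}\bigl(\{x\in Q:\ g(x)>C_2\lambda\}\bigr)\ls \lambda\, w_{\ch_{\fz}^{\dz}}\bigl(\{x\in Q:\ g(x)>\beta\lambda\}\bigr),
\]
where $g_{w_{\ch_{\fz}^{\dz}}}(E)=\int_E g\,dw_{\ch_{\fz}^{\dz}}$ is the capacity from \eqref{eq1111-6}. The chain of inequalities runs as follows. First, $g_{w_{\ch_{\fz}^{\dz}}}(\{g>C_2\lambda\})\le\int_{\cup Q_j}g\,dw_{\ch_{\fz}^{\dz}}$. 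Next, this is bounded by a constant times $\sum_v\int_{Q_{j_v}}g\,dw_{\ch_{\fz}^{\dz}}$, which is at most $C_1\lambda\sum_v w_{\ch_{\fz}^{\dz}}(Q_{j_v})$. Finally, the packing property of Lemma \ref{lm-0904-4}(ii) transfers the sum back to a single Choquet integral over $\bigcup_v Q_{j_v}$.

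\textbf{Main obstacle.} The step I expect to be hardest is the passage from $\cup_j Q_j$ to the subfamily. Lemma \ref{lm-0904-4} is stated for $\ch_{\fz}^{\dz}$, not for $w_{\ch_{\fz}^{\dz}}$, and in the proof of Theorem \ref{prop-0916-1} this was handled by the special structure $g=\frac1w$, through $\int \frac1w\,dw_{\ch_{\fz}^{\dz}}\sim\ch_{\fz}^{\dz}$. For general $g$, I would first try to rewrite everything in terms of $\ch_{\fz}^{\dz}$ via Remark \ref{8rem29}, that is, $\int_E g\,dw_{\ch_{\fz}^{\dz}}\sim\int_E gw\,d\ch_{\fz}^{\dz}$, and then apply Lemma \ref{lm-0904-4}(i)--(ii) directly to the function $gw$ and to $w\mathbf 1_{\{g>\beta\lambda\}}$. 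The remaining issue is controlling $\int_{\cup_j Q_j}gw\,d\ch_{\fz}^{\dz}$ by $\sum_v\int_{Q_{j_v}}gw\,d\ch_{\fz}^{\dz}$. For this I would use the structure of Lemma \ref{lm-0904-4}: the selected subcubes are maximal, and each dropped cube is covered by a selected one up to a doubling enlargement. Doubling of $w_{\ch_{\fz}^{\dz}}$, together with the bound $g_{Q_j,w}\sim\lambda$ on every cube, should make this work.

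\textbf{Step 2: layer-cake and absorption.} Multiply the estimate of Step 1 by $\gamma\lambda^{\gamma-1}$ and integrate over $\lambda\in(g_{Q,w},\fz)$, exactly as in the proof of \eqref{eq0908-2}. This gives
\[
\int_{Q}g^{1+\gamma}\,dw_{\ch_{\fz}^{\dz}}\le c_1\gamma\int_Q g^{1+\gamma}\,dw_{\ch_{\fz}^{\dz}}+c_2\,g_{Q,w}^{1+\gamma}\,w_{\ch_{\fz}^{\dz}}(Q),
\]
with $c_1,c_2$ independent of $\gamma$, obtained via the splitting \eqref{eq-0911-1} into the part of $Q$ where $g\le C_2g_{Q,w}$ and the part $F_1$ where it is larger. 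Along the way we use \eqref{eq1112-3'}, which converts $\int\cdot\, g\,dw_{\ch_{\fz}^{\dz}}$ into an integral against the capacity $g_{w_{\ch_{\fz}^{\dz}}}$.

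To absorb the first term we need its finiteness. We therefore run the argument for the truncation $g_N:=\min\{g,N\}$. Truncation preserves the $\ca_{q,w_{\ch_{\fz}^{\dz}}}$ condition up to constants independent of $N$, by the argument mirroring \eqref{9eq5}. Choosing $\gamma$ with $c_1\gamma<\frac12$ and absorbing, we let $N\to\fz$, using Remark \ref{12-4}(iv) through the dyadic content together with Remark \ref{8rem29}. This yields the desired reverse H\"older inequality, with $K$ depending only on $[w]_{\ca_{p,\dz}}$ and $[g]_{\ca_{q,w_{\ch_{\fz}^{\dz}}}}$.
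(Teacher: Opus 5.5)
Your overall architecture matches the paper's: Calder\'on--Zygmund cubes from Lemma \ref{lm-0904-5}, Lemma \ref{lm-0904-1} on the stopping cubes, layer-cake integration in $\lambda$, absorption for the truncations $\min\{g,N\}$, and the monotone limit through $\widetilde{\ch}_\fz^\dz$. However, the step you single out as the main obstacle is a genuine gap, and the mechanism you propose for it does not work.

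Lemma \ref{lm-0904-4} is a selection made purely in terms of $\ch_\fz^\dz$. Part (i) controls $\ch_\fz^\dz(\bigcup_jQ_j)$, not $\int_{\bigcup_jQ_j}g\,dw_{\ch_\fz^\dz}$. Part (ii), applied to $gw$, bounds the sum over $v$ by an integral over $\bigcup_vQ_{j_v}$, which is the wrong direction. Nor does the selection place each dropped cube inside a bounded enlargement of a selected one. Take $\dz<n$ and let $\{Q_j\}$ be the full grid of dyadic subcubes of side $2^{-k}$ of a unit cube $Q$. Then (ii) with $f\equiv1$ gives $\sum_v\ch_\fz^\dz(Q_{j_v})\ls\ch_\fz^\dz(Q)$, so only about $2^{k\dz}$ of the $2^{kn}$ cubes can be selected, and most dropped cubes are far from all of them. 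They can be accounted for only through dyadic ancestors many generations up. Doubling of $w_{\ch_\fz^\dz}$ costs a fixed factor per generation. The bound $g_{Q_j,w}\sim\lambda$ is information about the stopping cubes, not about those ancestors. So neither gives $\int_{\bigcup_jQ_j}g\,dw_{\ch_\fz^\dz}\ls\sum_v\int_{Q_{j_v}}g\,dw_{\ch_\fz^\dz}$. In the proof of Theorem \ref{prop-0916-1} this step was free only because $g=\frac1w$ makes $g_{w_{\ch_\fz^\dz}}$ comparable to $\ch_\fz^\dz$.

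The paper closes the gap by performing the selection with respect to the capacity $g_{w_{\ch_\fz^\dz}}$ itself. Lemma \ref{lm-0917-4}(ii) is exactly the missing covering inequality $g_{w_{\ch_\fz^\dz}}(\bigcup_jQ_j)\le2\sum_vg_{w_{\ch_\fz^\dz}}(Q_{j_v})$. The packing condition of Lemma \ref{lm-0917-4}(i) is what lets Lemma \ref{lm-0917-1} take over the role of Lemma \ref{lm-0904-4}(ii) when the sum over $v$ is turned back into a single Choquet integral.

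A content-based selection can also be salvaged, but the tool is not doubling. One needs the actual structure of the construction: dropped cubes are absorbed into non-stopping dyadic ancestors $P$ with $\ch_\fz^\dz(P)\le\sum_{Q_{j_v}\subset P}\ch_\fz^\dz(Q_{j_v})$, as recorded for $w_{\ch_\fz^\dz}$ in Lemma \ref{lm925-3}(iii). On top of that one needs three further ingredients. First, the stopping-time bound $g_{P,w}\le\lambda$, which comes from maximality. Second, packing, to get $\ch_\fz^\dz(E)\gs\ch_\fz^\dz(P)$ for $E:=\bigcup_{Q_{j_v}\subset P}Q_{j_v}$. Third, the $\ca_{p,\dz}$ estimate $[\ch_\fz^\dz(E)/\ch_\fz^\dz(P)]^p\ls w_{\ch_\fz^\dz}(E)/w_{\ch_\fz^\dz}(P)$. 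Together these give $g_{w_{\ch_\fz^\dz}}(P)\le\lambda w_{\ch_\fz^\dz}(P)\ls\sum_{Q_{j_v}\subset P}g_{w_{\ch_\fz^\dz}}(Q_{j_v})$. None of these ingredients is in your plan, so as written Step 1 is not established.
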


To prove this reverse H\"older inequality, we need some lemmas. Firstly, by \cite[Lemma 3.5]{ZZH-2} and the proof of \cite[Lemma 2.11]{HZZ25}, we have the following lemma.

\begin{lem}\label{lm-0917-3}
Let $w\in \ca_{p,\delta}$ with $p\in [1,\fz)$, $g\in\ca_{q, w_{\ch_{\fz}^{\dz}}}$ with $q\in(1,\fz)$, and $E\subset \rn$   satisfying $\int_{E}g(x)\,dw_{\ch_{\fz}^{\dz}}<\infty$. Then there exist a family $\{Q_j\}_{j\in\nn}$ of non-overlapping dyadic cubes and a subset $F\subset \rn$ with $w_{\ch_\fz^\delta}(F)=0$ such that
$$E\subset\lf(\bigcup_{j\in\nn} Q_j\r) \bigcup F\quad {\rm and} \quad \sum_{j\in\nn} \int_{Q_j}g(x)\,dw_{\ch_{\fz}^{\delta}}\leq K\int_E g(x)\,dw_{\ch_{\fz}^{\delta}},$$
where $K$ is a positive constant independent of $E$.
\end{lem}

Combining  Lemma \ref{lm-0917-3} with the proof of \cite[Proposition 2.12]{HZZ25}, we conclude the following lemma.

\begin{lem}\label{lm-0917-1}
Let $w\in\ca_{p,\dz}$ with $p\in[1,\fz)$ and $g\in\ca_{q, w_{\ch_{\fz}^{\dz}}}$ with  $q\in(1,\fz)$. Let $\{Q_j\}_{j\in\nn}$ be a family of non-overlapping dyadic cubes of $\rn$. If there exists a constant $\beta\in(0,\fz)$ such that, for each dyadic cube $Q$,
\begin{equation}\label{eq11-11-b}
\sum_{Q_{j}\subset Q}g_{w_{\mathcal H^\delta_\infty}}(Q_{j})\le \beta\,g_{w_{\mathcal H^\delta_\infty}}(Q),
\end{equation}
then there exists a positive constant $K$ such that
\begin{equation}\label{eq11-11-a}
\sum_{j\in\nn}\int_{Q_{j}}|f(x)|\,dg_{w_{\mathcal H^{\delta}_{\infty}}}\le K\int_{\cup_{j\in\nn} Q_{j}}|f(x)|\,dg_{w_{\mathcal H^{\delta}_{\infty}}},
\end{equation}
here and in the sequel, for any $E\subset\rn$,
$g_{w_{\ch_{\fz}^{\dz}}}(E):=\int_{E}g(x)\,dw_{\ch_{\fz}^{\dz}}.$
\end{lem}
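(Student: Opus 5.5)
The plan is to follow the classical Carleson-packing argument (as in the proof of \cite[Proposition 2.12]{HZZ25}), working throughout with the capacity $\mu:=g_{w_{\ch_{\fz}^{\dz}}}$ in place of $\ch_{\fz}^{\dz}$. Let $\Omega:=\bigcup_{j}Q_j$ and fix a level $t>0$. Let $E_t:=\{x\in\Omega: |f(x)|>t\}$. Since
\[
\sum_j\int_{Q_j}|f|\,d\mu=\int_0^\fz\sum_j\mu(E_t\cap Q_j)\,dt ,
\]
it suffices to prove the level-set estimate
\begin{equation*}
\sum_j \mu(E\cap Q_j)\le K\,\mu(E)\quad\text{for every } E\subset\Omega .
\end{equation*}
Integrating this in $t$ then gives \eqref{eq11-11-a}.

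To prove the level-set estimate, we may assume $\mu(E)<\fz$. First apply Lemma \ref{lm-0917-3} to $E$. This gives non-overlapping dyadic cubes $\{P_k\}$ and an exceptional set $F$ with $w_{\ch_\fz^\delta}(F)=0$ such that $E\subset\bigcup_k P_k\cup F$ and $\sum_k\mu(P_k)\le K\mu(E)$. Since $w_{\ch_\fz^\delta}(F)=0$, the Choquet integral of $g$ over $F$ vanishes, so $\mu(F)=0$ and $F$ can be discarded by subadditivity. Then, for each $j$,
\[
\mu(E\cap Q_j)\le\sum_k\mu(Q_j\cap P_k).
\]
Because all the cubes involved are dyadic, each pair $(j,k)$ with $Q_j\cap P_k\neq\emptyset$ satisfies $Q_j\subset P_k$ or $P_k\subsetneq Q_j$. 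We treat the two cases separately:
\begin{enumerate}
\item[(a)] If $Q_j\subset P_k$, then by the packing hypothesis \eqref{eq11-11-b},
\[
\sum_{Q_j\subset P_k}\mu(Q_j\cap P_k)\le\sum_{Q_j\subset P_k}\mu(Q_j)\le\beta\,\mu(P_k).
\]
\item[(b)] If $P_k\subsetneq Q_j$, then, since the $Q_j$ are non-overlapping, each $P_k$ lies in at most one $Q_j$. Hence these terms contribute at most $\sum_k\mu(P_k)$.
\end{enumerate}
Summing over $k$ and using the bound from Lemma \ref{lm-0917-3}, we get $\sum_j\mu(E\cap Q_j)\le(\beta+1)K\mu(E)$.

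Two technical points need checking. The first is that $\mu$ is a genuine capacity, that is, monotone and countably subadditive. This follows from Remark \ref{rem-11112-1}(i), provided $g\in L^1_{\loc}(w_{\ch_\fz^\delta})$, which holds by the definition of $\ca_{q,w_{\ch_\fz^\delta}}$. Countable subadditivity is exactly what justifies the step $\mu(E\cap Q_j)\le\sum_k\mu(Q_j\cap P_k)$. The second is that the layer-cake identity applies to $\int_{Q_j}|f|\,d\mu$; this holds by definition of the Choquet integral over a set.

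The main obstacle I expect is the covering step, that is, verifying that Lemma \ref{lm-0917-3} really yields dyadic cubes whose total $\mu$-mass is controlled by $\mu(E)$ rather than by $w_{\ch_\fz^\delta}(E)$. This is where $w\in\ca_{p,\dz}$ and $g\in\ca_{q,w_{\ch_\fz^\delta}}$ are used (through the doubling of $\mu$), and the present argument takes it from the lemma as stated. If the lemma were only available for $E$ with $\mu(E)<\fz$, the case $\mu(E)=\fz$ is trivial, so no generality is lost.
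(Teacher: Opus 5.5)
Your proposal is correct and is essentially the argument the paper intends. The paper only says to combine Lemma~\ref{lm-0917-3} with the packing argument from the proof of Proposition 2.12 in HZZ25, and your steps (reduction to level sets, covering by Lemma~\ref{lm-0917-3}, then splitting pairs into $Q_j\subset P_k$, handled by the packing hypothesis, and $P_k\subsetneq Q_j$, handled by disjointness of the $Q_j$) are exactly that argument, giving the constant $(\beta+1)K$. The step you flag as the main obstacle is not actually an issue, because Lemma~\ref{lm-0917-3} is already stated as $\sum_k\int_{P_k}g\,dw_{\ch_\fz^\dz}\le K\int_E g\,dw_{\ch_\fz^\dz}$, that is, in terms of $\mu(E)$.
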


In addition, we have the following lemma via an argument similar to that used in the proof of \cite[Lemma 2.6]{HZZ25}.

\begin{lem}\label{lm-0917-4}
Let $w$ and $g$ be $\ch_\fz^\delta$-capacitary weights. Assume that $\{Q_j\}_{j\in\nn}$ is a family of non-overlapping dyadic cubes of $\rn$, then there exists a subfamily $\{Q_{j_v}\}_{v=1}^N$, with positive integer $N<\fz$ or $N=\fz$, satisfying
\begin{enumerate}
\item[{\rm(i)}]
for any dyadic cubes $Q$,
$\sum_{Q_{j_v}\subset Q} g_{w_{\ch_\fz^\delta}}(Q_{j_v})\le 2 g_{w_{\ch_\fz^\delta}}(Q);$

\item[{\rm(ii)}]
$g_{w _{\mathcal H^\delta_\infty}} (\bigcup_{j\in\nn}Q_j)
\le 2\sum_{v=1}^N g_{w _{\mathcal H^\delta_\infty}}(Q_{j_v}).$
\end{enumerate}
\end{lem}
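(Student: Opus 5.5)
The plan is to follow the proof of \cite[Lemma 2.6]{HZZ25}, which is the case $g\equiv1$, with the capacity $\ch_\fz^\dz$ replaced by $g_{w_{\ch_\fz^\dz}}$. The properties I will use are: $g_{w_{\ch_\fz^\dz}}$ is monotone and countably subadditive (Remark \ref{rem-11112-1}(i) applied twice), and any two dyadic cubes are either nested or disjoint. No doubling or $\ca_p$ condition on $w$ or $g$ is required.

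\emph{Reduction to maximal cubes.} Since the $Q_j$ are non-overlapping dyadic cubes, each $Q_j$ is maximal in the family. I will assume $\sum_j g_{w_{\ch_\fz^\dz}}(Q_j)<\fz$ at first; otherwise I will work inside a fixed dyadic cube, or exhaust $\rn$ by the $2^n$ dyadic quadrant towers and pass to the limit.

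\emph{Stopping-time selection.} I will call a dyadic cube $Q$ \emph{heavy} if
\[
\sum_{Q_j\subset Q} g_{w_{\ch_\fz^\dz}}(Q_j)>2\,g_{w_{\ch_\fz^\dz}}(Q).
\]
Let $\{R_k\}$ be the maximal heavy cubes. They are pairwise disjoint. If the heavy cubes have no maximal elements, for instance along an increasing chain, I will truncate to generations of size at most $2^N$ and let $N\to\fz$, as in the classical argument.

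The subfamily $\{Q_{j_v}\}$ will consist of:
\begin{itemize}
\item[(a)] every $Q_j$ not contained in any heavy cube;
\item[(b)] each maximal heavy cube $R_k$ itself.
\end{itemize}
Item (b) is not legitimate as stated, since $R_k$ is not one of the $Q_j$. This forces a modification: the selection must be made among the original cubes only.

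\emph{Revised selection.} I will order the $Q_j$ in non-increasing side length. I keep $Q_j$ unless some dyadic ancestor $Q\supseteq Q_j$ already satisfies
\[
\sum_{\text{kept }Q_{j_v}\subset Q} g_{w_{\ch_\fz^\dz}}(Q_{j_v})+g_{w_{\ch_\fz^\dz}}(Q_j)>2\,g_{w_{\ch_\fz^\dz}}(Q),
\]
in which case I discard it. Property (i) then holds by construction.

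\emph{Main obstacle: property (ii).} Each discarded $Q_j$ lies in some dyadic cube $Q$ with
\[
g_{w_{\ch_\fz^\dz}}(Q)<\sum_{\text{kept}\subset Q} g_{w_{\ch_\fz^\dz}}(Q_{j_v}).
\]
I take the maximal such cubes $P_m$; they are disjoint. Subadditivity then gives
\[
g_{w_{\ch_\fz^\dz}}\Big(\bigcup_j Q_j\Big)\le \sum_{\text{kept not in any }P_m} g_{w_{\ch_\fz^\dz}}(Q_{j_v})+\sum_m g_{w_{\ch_\fz^\dz}}(P_m).
\]
Each term $g_{w_{\ch_\fz^\dz}}(P_m)$ is bounded by the sum of the kept cubes inside $P_m$, and since the $P_m$ are disjoint, no kept cube is counted twice. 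This yields (ii) with constant $1$, and hence with $2$.

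The delicate points are two:
\begin{itemize}
\item making the greedy selection well defined when there are infinitely many cubes of the same size, which I will handle by countable enumeration with ties broken arbitrarily;
\item the existence of the maximal $P_m$, which I will handle by the same exhaustion/truncation device as above.
\end{itemize}
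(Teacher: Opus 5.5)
The core of your plan is the standard stopping-time argument behind \cite[Lemma 2.6]{HZZ25}: a greedy selection, followed by covering with maximal overloaded cubes. Whenever every overloaded cube lies in a maximal one, this gives (ii) even with constant $1$. Ordering by side length is not always possible as a sequence (e.g.\ infinitely many cubes of one size plus smaller ones), but it is also unnecessary: (i) and the witness inequality $g_{w_{\ch_\fz^\dz}}(Q_j)\le g_{w_{\ch_\fz^\dz}}(Q)$ work for any enumeration.

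The gap is exactly the case you defer to an exhaustion/truncation device: an infinite tower $Q^{(1)}\subsetneq Q^{(2)}\subsetneq\cdots$ of overloaded cubes exhausting one of the $2^n$ orthants $O$. For $g\equiv w\equiv1$ this is impossible with a finite kept sum, since $l(Q^{(k)})^\dz\to\fz$. Here, however, $g_{w_{\ch_\fz^\dz}}(Q^{(k)})$ may stay bounded (take $g\equiv1$ and $w(x)=e^{-|x|}$, so $w_{\ch_\fz^\dz}(\rn)<\fz$). Truncating at side length $2^M$ only yields sets $A_M\uparrow\bigcup_jQ_j$ with $g_{w_{\ch_\fz^\dz}}(A_M)\le\sum_vg_{w_{\ch_\fz^\dz}}(Q_{j_v})$. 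Letting $M\to\fz$ is continuity from below, which does not follow from monotonicity, countable subadditivity and dyadic nesting.

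In fact the statement fails for general capacities. On $\mathbb R$, let $\nu(E)$ be the number of intervals $(m-1,m]$, $m\in\zz$, meeting $E$, and fix $\varepsilon\in(0,\frac1{10})$. Set $C(E):=1$ if $\nu(E)=\fz$ and $C(E):=\min\{\frac1{10},\varepsilon\,\nu(E)\}$ otherwise. This $C$ is monotone and countably subadditive. Yet for $Q_j:=(j-1,j]$, $j\in\nn$, condition (i) at $Q=(0,2^k]$ with $k\to\fz$ forces $\sum_vC(Q_{j_v})\le\frac15$, whereas $C(\bigcup_jQ_j)=1$. So your claim that only those three properties are used cannot be right.

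Some continuity of $g_{w_{\ch_\fz^\dz}}$ therefore has to be imported, and in this paper it comes from Remark \ref{12-4}(ii),(iv). Set $w_{\widetilde\ch_\fz^\dz}(F):=\int_Fw\,d\widetilde\ch_\fz^\dz$. Applying (iv) to $w\mathbf{1}_{F_k}\uparrow w\mathbf{1}_{F}$, and then monotone convergence in $t$, shows that $E\mapsto\int_Eg\,dw_{\widetilde\ch_\fz^\dz}$ is continuous from below. This set function is comparable to $g_{w_{\ch_\fz^\dz}}$ with constants $1$ and $K(n,\dz)$. In a degenerate orthant this gives $g_{w_{\ch_\fz^\dz}}(O)\le K(n,\dz)\lim_kg_{w_{\ch_\fz^\dz}}(Q^{(k)})\le K(n,\dz)\sum_{Q_{j_v}\subset O}g_{w_{\ch_\fz^\dz}}(Q_{j_v})$. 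The remaining orthants are handled by your maximal cubes, so (ii) follows with $2$ replaced by $\max\{2,K(n,\dz)\}$.

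This weaker constant is enough for every later use, since only `$\ls$' is needed in the proof of Lemma \ref{lm-0919-1}. Moreover, for bounded families such as those produced by Lemma \ref{lm-0904-5}, a single overloaded tower cube already contains all $Q_j$ in its orthant, so no limit is needed at all. Getting exactly the constant $2$ for arbitrary families would require continuity from below of $\ch_\fz^\dz$ itself. That is not available here, since Remark \ref{12-4}(iv) is stated only for $\widetilde\ch_\fz^\dz$.
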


We next show Lemma \ref{lm-0919-1}.

\begin{proof}[Proof of Lemma \ref{lm-0919-1}]
By the proof of \eqref{eq0908-1} with $\frac{1}{w}$ replaced by $g$ and Lemma \ref{lm-0904-4} therein replaced by Lemmas \ref{lm-0917-1} and \ref{lm-0917-4}, we then deduce that there exist positive constants $k_1$ and $k_2$ such that, for any cube $Q\subset\rn$ and any $\gamma\in(0,1)$,
\begin{align}\label{eq0919-2}
\int_{Q}g(x)^{1+\gamma}\,dw_{\ch_{\fz}^{\dz}}\leq k_1\gamma \int_Q g(x)^{1+\gamma}\,dw_{\ch^{\delta}_{\infty}}+ k_2w_{\ch_{\fz}^{\dz}}(Q)\lf[\fint_{Q}g(x)\,dw_{\ch_{\fz}^{\dz}}\r]^{1+\gamma}.
\end{align}

For any $m\in\nn$, define
$$g_m(x):=
\begin{cases}
m,\quad &x\in G_1:=\{x\in\rn: g(x)>m\},\\
g(x),\quad &x\in G_2:=\{x\in\rn: g(x)\leq m\}.
\end{cases}
$$
Then $g_m\le g$, and we next prove that $g_m\in\ca_{q,w_{\ch_{\fz}^{\dz}}}$ with a constant independent of $m$.
Indeed, for any cube $Q\subset\rn$ and any $m\in\nn$,
\begin{align*}
&\lf\{\fint_Qg_m(x)\,dw_{\mathcal H^{\delta}_\infty}\r\}\lf\{\fint_Qg_m(x)^{-\frac{1}{q-1}}\,dw_{\mathcal H^{\delta}_\infty}\r\}^{q-1}\\
&\quad\leq \max\lf\{2^{q-1},1\r\}\lf[\lf\{\fint_Qg(x)\,dw_{\mathcal H^{\delta}_\infty}\r\}\lf\{\fint_Qg(x)^{-\frac{1}{q-1}}\,dw_{\mathcal H^{\delta}_\infty}\r\}^{q-1}+1\r]\\
&\quad \leq \max\lf\{2^{q-1},1\r\}\lf([g]_{\ca_{q,w_{\ch_{\fz}^{\dz}}}}+1\r).
\end{align*}
Thus \eqref{eq0919-2} holds also for $g_m$.
Note that for any cube $Q\subset\rn$ and any $m\in\nn$,
\[
\int_Q g_m(x)^{1+\gamma}\,dw_{\ch^{\delta}_{\infty}}\leq m^{1+\gamma}w_{\ch^{\delta}_{\infty}}(Q)< \fz.
\]
Therefore, rearranging \eqref{eq0919-2} for $g_m$ and choosing $\gamma\in (0,1)$ such that $k_1\gamma<1$, we obtain
\begin{align}\label{eq0919-6}
\int_{Q}g_m(x)^{1+\gamma}\,dw_{\ch_{\fz}^{\dz}}\ls w_{\ch_{\fz}^{\dz}}(Q)\lf[\fint_{Q}g_m(x)\,dw_{\ch_{\fz}^{\dz}}\r]^{1+\gamma}\ls w_{\ch_{\fz}^{\dz}}(Q)\lf[\fint_{Q}g(x)\,dw_{\ch_{\fz}^{\dz}}\r]^{1+\gamma}.
\end{align}
Notice that $0\leq (g_m)^{1+\gamma}w\uparrow g^{1+\gamma}w$ as $m\to\fz$. Applying Remark \ref{8rem29}, (ii) and (iv) of Remark \ref{12-4}, \eqref{eq0919-6}, and letting $m\to\fz$, we know that, for any cube $Q\subset\rn$,
\begin{align*}
\int_{Q}g(x)^{1+\gamma}\,dw_{\ch_{\fz}^{\dz}}
&\ls\int_{Q}g(x)^{1+\gamma}w(x)\,d\widetilde\ch_{\fz}^{\dz}\sim \lim_{m\to\fz}\int_{Q}g_m(x)^{1+\gamma}w(x)\,d\widetilde\ch_{\fz}^{\dz}\\
&\ls \limsup_{m\to\fz}\int_{Q}g_m(x)^{1+\gamma}w(x)\,d\ch_{\fz}^{\dz}\ls\limsup_{m\to\fz}\int_{Q}g_m(x)^{1+\gamma}\,dw_{\ch_{\fz}^{\dz}}\\
&\ls
w_{\ch_{\fz}^{\dz}}(Q)\lf[\fint_{Q}g(x)\,dw_{\ch_{\fz}^{\dz}}\r]^{1+\gamma}.
\end{align*}
This is the desired reverse H\"older inequality and hence completes the proof of Lemma \ref{lm-0919-1}.
\end{proof}

Now we prove Theorem \ref{lm-0921-1} with the help of Theorem \ref{prop-0916-1} and Lemma \ref{lm-0919-1}.

\begin{proof}[Proof of Theorem \ref{lm-0921-1}]
Since $w\in\rh_{q,\dz}$, it follows from Remark \ref{8rem27} and \eqref{8e26} that $w^{q-1}\in\ca_{q, w_{\ch_{\fz}^{\dz}}}$.
Moreover, by Theorem \ref{prop-0916-1}, there exists $s\in[1, \fz)$ such that $w\in\ca_{s, \dz}$.
Thus, by Lemma \ref{lm-0919-1} with $g:=w^{q-1}$, there exists a positive constant $\gamma\in(0,1)$ such that, for any cube $Q\subset \rn$,
\[
\lf[\fint_{Q}w(x)^{(q-1)(1+\gamma)}\,dw_{\ch_{\infty}^{\delta}}\r]^{\frac{1}{1+\gamma}}\ls \fint_{Q}w(x)^{q-1}\,dw_{\ch_{\infty}^{\delta}}.
\]
This, together with Remark \ref{8rem29}, further implies that
\begin{align}\label{eq0921-1}
\frac{\int_{Q}w(x)^{(q-1)(1+\gamma)+1}\,d\ch_{\infty}^{\delta}}{w_{\ch_{\fz}^{\dz}}(Q)}\ls \lf[\frac{\int_{Q}w(x)^{q}\,d\ch_{\infty}^{\delta}}{w_{\ch_{\fz}^{\dz}}(Q)}\r]^{1+\gamma}.
\end{align}
Let $p:=(q-1)(1+\gamma)+1$. Then $p>q$, and by \eqref{eq0921-1} and $w\in\rh_{q,\dz}$, we find that, for any cube $Q\subset\rn$,
\begin{align*}
&\lf[\sum_{j\in\zz}\ch_{\fz}^{\dz}(  W_j\cap Q)\r]^{1-\frac{1}{p}}\lf[\int_{2Q}w(x)^p\,d\ch_{\fz}^{\dz}\r]^{\frac{1}{p}}\\
&\quad\ls \lf[\sum_{j\in\zz}\ch_{\fz}^{\dz}(  W_j\cap Q)\r]^{1-\frac{1}{p}}\lf[\int_{2Q}w(x)\,d\ch_{\fz}^{\dz}\r]^{-\frac{\gamma}{p}}\lf[\int_{2Q}w(x)^q\,d\ch_{\fz}^{\dz}\r]^{\frac{1+\gamma}{p}}\\
&\quad\ls\lf[\sum_{j\in\zz}\ch_{\fz}^{\dz}(  W_j\cap Q)\r]^{\frac{(q-1)(1+\gamma)}{p}}\lf[\int_{2Q}w(x)^q\,d\ch_{\fz}^{\dz}\r]^{\frac{1+\gamma}{p}}w_{\ch_{\fz}^{\dz}}(Q)^{-\frac{\gamma}{p}}\\
&\quad \ls[w]_{\rh_{q,\dz}}^{\frac{(1+\gamma)q}{p}} w_{\ch_{\fz}^{\dz}}(Q)^{\frac{(1+\gamma)q}{p}}w_{\ch_{\fz}^{\dz}}(Q)^{-\frac{\gamma}{p}}
\sim  [w]_{\rh_{q,\dz}}^{\frac{(1+\gamma)q}{p}} w_{\ch_{\fz}^{\dz}}(Q),
\end{align*}
which implies that $w\in\rh_{p, \dz}$.
This completes the proof of Theorem \ref{lm-0921-1} by taking $\varepsilon:=p-q$.
\end{proof}

\section{Proof of Theorem \ref{them0923-1}}\label{s4}

To prove Theorem \ref{them0923-1}, we need the following useful lemma, which comes from \cite[Lemma 2.6 and Proposition 2.12]{HZZ25}.

\begin{lem}\label{lm925-3}
Let $w\in\ca_{p, \dz}$ with $p\in[1, \fz)$. Assume that $\{Q_j\}_{j\in\nn}$ is a family of non-overlapping dyadic cubes of $\rn$, then there exists a subfamily $\{Q_{j_v}\}_{v}$ and non-overlapping dyadic cubes $\{Q^*_{k}\}_{k}$ satisfying
\begin{enumerate}
\item[{\rm(i)}]
$\bigcup_{j\in\nn}Q_j\subset (\bigcup_{v}Q_{j_v})\cup (\bigcup_{k}Q^*_k$);
\item[{\rm(ii)}]
there exists a positive constant $K$ such that
$\sum_{v}w_{\mathcal H^{\delta}_{\infty}}(Q_{j_v})\le Kw_{\mathcal H^{\delta}_{\infty}}(\bigcup_{v}Q_{j_v})$;
\item[{\rm(iii)}] for any $Q_k^*$, there exists $j\in\nn$ such that $Q_j\subset Q_k^*$, and
$w _{\mathcal H^\delta_\infty}(Q_k^*)\le \sum_{Q_{j_v}\subset Q_k^*} w _{\mathcal H^\delta_\infty}(Q_{j_v}).$
\end{enumerate}
\end{lem}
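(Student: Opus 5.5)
The plan is to obtain all three properties from a single greedy stopping-time selection, the same one behind Lemma \ref{lm-0917-4}, now with $g\equiv 1$.

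\textbf{Selection.} Order the family $\{Q_j\}_{j\in\nn}$ by non-increasing side length, breaking ties arbitrarily. Since the cubes are non-overlapping dyadic cubes, none of them is contained in a cube chosen later. Go through the list and select $Q_j$ exactly when adding it to the already selected cubes keeps the packing condition
\[
\sum_{Q_{j_v}\subset Q} w_{\ch_\fz^\dz}(Q_{j_v})\le 2\, w_{\ch_\fz^\dz}(Q)
\]
valid for every dyadic cube $Q$. The constraint only changes for dyadic ancestors $Q\supseteq Q_j$, so this is the condition that must be checked. Call the selected cubes $\{Q_{j_v}\}_v$; by construction they satisfy the packing condition for every dyadic $Q$.

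\textbf{Property (ii).} This should follow directly from Lemma \ref{lm-0917-1} with $g\equiv1$, $\beta=2$ and $f\equiv 1$ on $\bigcup_v Q_{j_v}$. The constant function is trivially in $\ca_{q,w_{\ch_\fz^\dz}}$ for every $q\in(1,\fz)$, and then $g_{w_{\ch_\fz^\dz}}=w_{\ch_\fz^\dz}$. Inequality \eqref{eq11-11-a} then reads $\sum_v w_{\ch_\fz^\dz}(Q_{j_v})\le K w_{\ch_\fz^\dz}(\bigcup_v Q_{j_v})$. This is the step where the hypothesis $w\in\ca_{p,\dz}$ is actually used; the Carleson-type embedding behind \cite[Proposition 2.12]{HZZ25} is the genuine difficulty, and here it is absorbed into Lemma \ref{lm-0917-1}.

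\textbf{Construction of the $Q_k^*$.} Suppose $Q_j$ was rejected. Then at the moment of rejection some dyadic $Q\supseteq Q_j$ satisfied
\[
\sum_{\text{selected } Q_{j_v}\subset Q} w_{\ch_\fz^\dz}(Q_{j_v})+w_{\ch_\fz^\dz}(Q_j)>2w_{\ch_\fz^\dz}(Q).
\]
Since $w_{\ch_\fz^\dz}(Q_j)\le w_{\ch_\fz^\dz}(Q)$, this gives $w_{\ch_\fz^\dz}(Q)<\sum_{Q_{j_v}\subset Q}w_{\ch_\fz^\dz}(Q_{j_v})$. Call a dyadic cube $Q$ \emph{heavy} if $w_{\ch_\fz^\dz}(Q)\le\sum_{Q_{j_v}\subset Q}w_{\ch_\fz^\dz}(Q_{j_v})$ and $Q$ contains some $Q_j$. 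Let $\{Q_k^*\}_k$ be the maximal heavy cubes; they are non-overlapping, and (iii) holds by definition. For (i), every selected cube lies in $\bigcup_v Q_{j_v}$, and every rejected $Q_j$ lies in a heavy cube, hence in some $Q_k^*$, provided maximal heavy cubes exist.

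\textbf{Main obstacle: existence of maximal heavy cubes.} Each heavy cube must sit below a maximal one, i.e.\ chains of heavy ancestors must terminate. Along an increasing chain of dyadic ancestors $Q\subset Q'\subset\cdots$, the doubling property \eqref{eq1111-1} of $w\in\ca_{p,\dz}$, together with the lower bound from the $\ca_{p,\dz}$ condition (via H\"older), gives $w_{\ch_\fz^\dz}(Q')\gs [l(Q')/l(Q)]^{\epsilon}w_{\ch_\fz^\dz}(Q)\to\fz$ for some $\epsilon>0$. On the other hand, the right-hand side of the heavy condition is bounded by $\sum_v w_{\ch_\fz^\dz}(Q_{j_v})$. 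So if $\sum_v w_{\ch_\fz^\dz}(Q_{j_v})<\fz$, the chain stops. In the complementary case the packing condition and (ii) force $w_{\ch_\fz^\dz}(\bigcup_v Q_{j_v})=\fz$. I would handle this case separately, either by splitting into the $2^n$ dyadic quadrants and running the argument on each, or by noting that the statement is then vacuous in the form it is used later. I expect this to need some care but no new ideas.
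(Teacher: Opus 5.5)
Your route is the one the paper intends. The paper gives no proof here; it obtains the lemma from \cite[Lemma 2.6]{HZZ25} (greedy selection under the packing condition with constant $2$, plus maximal ``heavy'' cubes $Q_k^*$) and \cite[Proposition 2.12]{HZZ25} (packing implies (ii)). That is exactly what you reconstruct via Lemmas \ref{lm-0917-4} and \ref{lm-0917-1} with $g\equiv1$. Your derivations of (ii), of (iii), and of the heaviness of the cube that witnesses a rejection are correct. Three points still need repair.

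\textbf{First, growth along ascending chains.} The growth $w_{\ch_\fz^\dz}(Q')\to\fz$ does not follow from what you cite. The doubling bound \eqref{eq1111-1} and the H\"older/$\ca_{p,\dz}$ bound $[\ch_\fz^\dz(E)/\ch_\fz^\dz(Q)]^p\ls w_{\ch_\fz^\dz}(E)/w_{\ch_\fz^\dz}(Q)$ only bound the larger cube from above. The lower bound $w_{\ch_\fz^\dz}(Q')\gs \ch_\fz^\dz(Q')^p\,\sigma_{\ch_\fz^\dz}(Q')^{1-p}$, with $\sigma=w^{-1/(p-1)}$, combined with the doubling of $\sigma$, yields only $w_{\ch_\fz^\dz}(Q')\gs w_{\ch_\fz^\dz}(Q)$, with no gain in $l(Q')/l(Q)$. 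Reverse doubling in the Lebesgue case comes from additivity, which $w_{\ch_\fz^\dz}$ lacks. The missing input is reverse H\"older. By Theorem \ref{prop-0916-1}, $w\in\rh_{r,\dz}$ for some $r\in(1,\fz]$. Lemma \ref{lm0922-2}, applied to $Q\subset Q'$ with $l(Q')=2^kl(Q)$, gives $w_{\ch_\fz^\dz}(Q)\le 2[w]_{\rh_{r,\dz}}2^{-k\dz(1-1/r)}w_{\ch_\fz^\dz}(Q')$. Since $w_{\ch_\fz^\dz}(Q)>0$ (because $w>0$ $\ch_\fz^\dz$--a.e.), this forces the growth.

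\textbf{Second, the case $\sum_v w_{\ch_\fz^\dz}(Q_{j_v})=\fz$.} Splitting into the $2^n$ dyadic orthants does not help, since each orthant still contains unbounded ascending chains. Saying the case is ``vacuous in the application'' is not a proof of the lemma as stated. However, the lemma is trivial here. In this case $w_{\ch_\fz^\dz}(\bigcup_jQ_j)=\fz$, so take the whole family as the subfamily and no cubes $Q_k^*$. Then (i) and (iii) hold trivially, and (ii) holds because its right-hand side is infinite.

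\textbf{Third, the ordering.} An enumeration by non-increasing side length need not exist, because side lengths may be unbounded; and when some size occurs infinitely often it is transfinite. Your rejection argument never uses the order, so run the greedy selection along any enumeration $j=1,2,\dots$. The packing condition passes to the final selection, because every finite set of selected cubes was already selected at some finite stage.
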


In the proof of Theorem \ref{prop-0916-1}, we showed that
$
\max\{w,N\}\in \rh_{q,\dz}
$
if $w\in\rh_{q,\dz}$. The following result establishes the analogous property for truncation $w_N:=\min\{w,N\}$. A key feature of this truncation is that the 
function $w_N$ is bounded, which plays an important role in the proof of Theorem \ref{them0923-1}.

\begin{lem}\label{lm-930-1}
Let $w\in\rh_{q, \dz}$ with $q\in(1, \fz]$. Then, for any $N\in\nn$, $w_N:=\min\{w, N\}\in\rh_{q,\dz}$ and the constant involved is independent of $N$.
\end{lem}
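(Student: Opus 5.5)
The plan is to verify Definition \ref{rh} for $w_N:=\min\{w,N\}$ directly, rather than through Remark \ref{09re13}. That remark relies on Proposition \ref{prop-0902-1}(ii), which presupposes membership in $\rh_{q,\dz}$ and so would make the argument circular. Write $W_{N,j}:=\{x\in\rn: 2^{j-1}<w_N(x)\le 2^j\}$ and fix $N_0\in\zz$ with $2^{N_0-1}<N\le 2^{N_0}$. Then:
\begin{itemize}
\item $W_{N,j}=W_j$ for $j<N_0$;
\item $W_{N,j}=\emptyset$ for $j>N_0$;
\item $W_{N,N_0}\subset\bigcup_{j\ge N_0}W_j\cup\{w=\fz\}$, and $\{w=\fz\}$ is $\ch_\fz^\dz$-null because $w$ is a capacitary weight.
\end{itemize}
By countable subadditivity this gives, for every cube $Q$,
\[
\sum_{j\in\zz}\ch_{\fz}^{\dz}(W_{N,j}\cap Q)\le\sum_{j\in\zz}\ch_{\fz}^{\dz}(W_j\cap Q).
\]
Combined with the trivial bounds $w_N\le w$ and $w_N\le N$, the numerator in $[w_N]_{\rh_{q,\dz}}$ is controlled by two quantities at once:
\begin{itemize}
\item the numerator for $w$, which is at most $[w]_{\rh_{q,\dz}}w_{\ch_\fz^\dz}(Q)$;
\item the quantity $N\bigl[\sum_j\ch_\fz^\dz(W_j\cap Q)\bigr]^{1-\frac1q}\ch_\fz^\dz(2Q)^{\frac1q}$, which by \eqref{eq0902-2} and $\ch_\fz^\dz(2Q)=2^\dz\ch_\fz^\dz(Q)$ is $\ls N\ch_\fz^\dz(Q)$.
\end{itemize}
For $q=\fz$ the same two bounds hold with $\|w_N\|_{L^\fz(2Q,\ch_\fz^\dz)}\le\min\{\|w\|_{L^\fz(2Q,\ch_\fz^\dz)},N\}$.

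It therefore remains to show that $(w_N)_{\ch_\fz^\dz}(Q)=\int_Q w_N\,d\ch_\fz^\dz$ dominates one of these two bounds, up to a constant independent of $N$. Set $E:=\{x\in Q: w(x)>N\}$ and fix a small $\eta\in(0,1)$ depending only on $[w]_{\rh_{q,\dz}}$ and $q$. I split into two cases according to the size of $\ch_\fz^\dz(E)$.

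\textbf{Case $\ch_\fz^\dz(E)\ge\eta\,\ch_\fz^\dz(Q)$.} Since $w_N\ge N\mathbf 1_E$, we get $\int_Q w_N\,d\ch_\fz^\dz\ge N\ch_\fz^\dz(E)\ge\eta N\ch_\fz^\dz(Q)$. The second bound then gives $[w_N]$-numerator $\ls\eta^{-1}\int_Q w_N\,d\ch_\fz^\dz$.

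\textbf{Case $\ch_\fz^\dz(E)<\eta\,\ch_\fz^\dz(Q)$.} Lemma \ref{lm0922-2} gives $w_{\ch_\fz^\dz}(E)\le 2[w]_{\rh_{q,\dz}}\eta^{1-\frac1q}w_{\ch_\fz^\dz}(Q)$. Choose $\eta$ so that the right-hand side is at most $\frac12 w_{\ch_\fz^\dz}(Q)$; when $q=\fz$ the exponent is $1$. Subadditivity of the capacity $w_{\ch_\fz^\dz}$ yields
\[
w_{\ch_\fz^\dz}(Q)\le w_{\ch_\fz^\dz}(Q\setminus E)+w_{\ch_\fz^\dz}(E),
\]
so $w_{\ch_\fz^\dz}(Q\setminus E)\ge\frac12 w_{\ch_\fz^\dz}(Q)$. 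On $Q\setminus E$ we have $w_N=w$, and monotonicity of the Choquet integral gives
\[
\int_Q w_N\,d\ch_\fz^\dz\ge\int_{Q\setminus E}w\,d\ch_\fz^\dz\ge\tfrac12 w_{\ch_\fz^\dz}(Q).
\]
The first bound then gives $[w_N]$-numerator $\le 2[w]_{\rh_{q,\dz}}\int_Q w_N\,d\ch_\fz^\dz$.

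In both cases $[w_N]_{\rh_{q,\dz}}$ is bounded by a constant depending only on $[w]_{\rh_{q,\dz}}$, $q$, $\dz$ and the constant in \eqref{eq0902-2}, and in particular not on $N$. One also checks that $w_N$ is a capacitary weight: it is positive and finite $\ch_\fz^\dz$-a.e. and locally integrable since $w_N\le w$. The main obstacle is the second case, where a truncation could in principle destroy most of the weight; the point of the argument is that Lemma \ref{lm0922-2} prevents the set where truncation occurs from carrying a large share of $w_{\ch_\fz^\dz}(Q)$, while the non-additivity of the capacity is handled through subadditivity alone.
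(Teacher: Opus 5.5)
Your proof is correct, but it is organized differently from the paper's.

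\textbf{The paper's route.} It proves two separate uniform estimates for $w_N$ itself. The first is the doubling inequality \eqref{eq831-2}; its first case, $\ch_\fz^\dz(E_Q)\le\frac12\ch_\fz^\dz(Q)$, goes through $w\in\ca_{p,\dz}$ (via Theorem \ref{prop-0916-1}) and \cite[Lemma 2.4]{HZZ25}. The second is the cube-wise reverse H\"older inequality \eqref{eq831-5}, split according to whether $E_Q$ carries half of $w_{\ch_\fz^\dz}(Q)$. The two are then combined with \eqref{eq831-3} by applying \eqref{eq831-5} on $2Q$ and doubling back to $Q$.

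\textbf{Your route.} You never separate doubling from reverse H\"older. Since $w_N\le w$ and $w_N\le N$, the whole $\rh_{q,\dz}$ functional of $w_N$ on $Q$, including its $2Q$ part, is already $\ls\min\{w_{\ch_\fz^\dz}(Q),\,N\ch_\fz^\dz(Q)\}$. This holds because the passage from $2Q$ to $Q$ is built into $[w]_{\rh_{q,\dz}}$ and into \eqref{eq0902-2} for $w$. So it only remains to show
\[
\int_Q w_N\,d\ch_\fz^\dz\gs\min\{w_{\ch_\fz^\dz}(Q),\,N\ch_\fz^\dz(Q)\},
\]
and your single case split does exactly that. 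Your small-$E$ case via Lemma \ref{lm0922-2} is the contrapositive of the H\"older argument in the paper's second case of \eqref{eq831-5}. Because the large-$E$ case only costs a factor $\eta^{-1}$, you may take the threshold $\eta$ as small as you like. That freedom is what lets you dispense with the $\ca_{p,\dz}$ input the paper needs at threshold $\frac12$ in its doubling step.

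\textbf{What each buys.} Your approach is shorter, stays entirely inside the $\rh_{q,\dz}$ definition, and gives an explicit $N$-independent constant. It correctly uses \eqref{eq0902-2} only for $w$, so it avoids the circularity you flagged in Remark \ref{09re13}; the paper also avoids that remark. The paper's approach yields doubling and the reverse H\"older inequality for $w_N$ as standalone statements. Those, however, follow from your conclusion anyway via Proposition \ref{prop-0902-1}(ii) and Remark \ref{re109}.
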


\begin{proof}
Notice that for $w_N$, the inequality \eqref{eq831-1} also holds. Combining this with \eqref{eq0902-2}, for any cube $Q\subset\rn$, we have
\begin{align}\label{eq831-3}
\sum_{j\in\zz}\ch_{\fz}^{\dz}(W_{N,j}\cap Q)\leq\sum_{j\in\zz}\ch_{\fz}^{\dz}(w_{j}\cap Q)\ls\ch_{\fz}^{\dz}(Q).
\end{align}
Here $W_{N,j}:=\{x\in\rn: 2^{j-1}<w_N(x)\leq 2^j\}$ and the implicit constant comes from \eqref{eq0902-2}, independent of $N$. Next, we prove that there exists a positive constant $K_1$, independent of $N$, such that for any cube $Q\subset\rn$,
\begin{align}\label{eq831-2}
\int_{2Q}w_N(x)\,d\ch_{\fz}^{\dz}\leq K_1\int_{Q}w_N(x)\,d\ch_{\fz}^{\dz}.
\end{align}
Indeed, for any cube $Q\subset\rn$, set $E_Q:=\{x\in Q: w(x)>N\}$. We verify \eqref{eq831-2} by considering two cases below.

\textbf{Case 1)} $\ch_{\fz}^{\dz}(E_Q)\leq\frac{1}{2}\ch_{\fz}^{\dz}(Q)$. Then
\begin{align}\label{eq831-4}
\ch_{\fz}^{\dz}(Q\setminus E_Q)\ge \ch_{\fz}^{\dz}(Q)-\ch_{\fz}^{\dz}(E_Q)\ge\frac{1}{2}\ch_{\fz}^{\dz}(Q).
\end{align}
In addition, since $w\in\rh_{q,\dz}$, by Theorem \ref{prop-0916-1}, there exists $p\in(1,\fz)$ such that $w\in\ca_{p,\dz}$.
Then it follows from \eqref{eq831-4} and \cite[Lemma 2.4]{HZZ25} that
\[
\int_Qw(x)\,d\ch_{\fz}^{\dz}\ls\int_{Q\setminus E_Q}w(x)\,d\ch_{\fz}^{\dz}\ls\int_Qw_{N}(x)\,d\ch_{\fz}^{\dz}.
\]
This, together with \eqref{eq1111-1}, further implies that
\[
\int_{2Q}w_N(x)\,d\ch_{\fz}^{\dz}\leq\int_{2Q}w(x)\,d\ch_{\fz}^{\dz}\ls\int_Qw(x)\,d\ch_{\fz}^{\dz}\ls\int_Qw_{N}(x)\,d\ch_{\fz}^{\dz}.
\]
Hence \eqref{eq831-2} holds in this case.

\textbf{Case 2)} $\ch_{\fz}^{\dz}(E_Q)>\frac{1}{2}\ch_{\fz}^{\dz}(Q)$. Then
\[
\int_Qw_{N}(x)\,d\ch_{\fz}^{\dz}\ge N\ch_{\fz}^{\dz}(E_Q)>\frac{N}{2}\ch_{\fz}^{\dz}(Q).
\]
Combining this with Remark \ref{12-4}(i), we conclude that
\[
\int_{2Q}w_N(x)\,d\ch_{\fz}^{\dz}\leq N\ch_{\fz}^{\dz}(2Q)=2^{\dz}N\ch_{\fz}^{\dz}(Q)\leq 2^{\dz+1}\int_Qw_N(x)\,d\ch_{\fz}^{\dz}.
\]
Therefore \eqref{eq831-2} also holds in this case.

To prove $w_N\in\rh_{q,\dz}$, in view of \eqref{eq831-3} and \eqref{eq831-2}, it suffices to prove that there exists a positive constant $K_2$, independent of $N$, such that for any cube $Q\subset\rn$,
\begin{align}\label{eq831-5}
\lf(\int_{Q}w_{N}(x)^{q}\,d\ch_{\fz}^{\dz}\r)^{\frac{1}{q}}\leq \frac{K_2}{\ch_{\fz}^{\dz}(Q)^{1-\frac{1}{q}}}\int_Qw_N(x)\,d\ch_{\fz}^{\dz}.
\end{align}
Here and in what follows, for $q=\fz$, the expression $\lf(\int_{E}f(x)^{q}\,d\ch_{\fz}^{\dz}\r)^{\frac{1}{q}}$ stands for $\|f\|_{L^{\fz}(E, \ch_{\fz}^{\dz})}$ and $\frac{1}{q}=0$. For any cube $Q\subset\rn$, we now prove \eqref{eq831-5} by considering two cases.

\textbf{Case 1)} $\int_{E_Q}w(x)\,d\ch_{\fz}^{\dz}\leq \frac{1}{2}\int_Qw(x)\,d\ch_{\fz}^{\dz}$. Then by Remark \ref{rem-11112-1}(i), we have
\[
\int_Qw(x)\,d\ch_{\fz}^{\dz}\leq\int_{Q\setminus E_Q}w(x)\,d\ch_{\fz}^{\dz}+\int_{E_Q}w(x)\,d\ch_{\fz}^{\dz}
\leq \int_{Q\setminus E_Q}w_N(x)\,d\ch_{\fz}^{\dz}+\frac{1}{2}\int_Qw(x)\,d\ch_{\fz}^{\dz}.
\]
That is,
\begin{align}\label{eq831-8}
\int_Qw(x)\,d\ch_{\fz}^{\dz}\leq2\int_{Q\setminus E_Q}w_N(x)\,d\ch_{\fz}^{\dz}\leq 2\int_Qw_N(x)\,d\ch_{\fz}^{\dz}.
\end{align}
Combining this with \eqref{8e26} and \eqref{8e30}, we get
\begin{align*}
\lf(\int_{Q}w_{N}(x)^{q}\,d\ch_{\fz}^{\dz}\r)^{\frac{1}{q}}
&\leq \lf(\int_{Q}w(x)^{q}\,d\ch_{\fz}^{\dz}\r)^{\frac{1}{q}}\ls \frac{1}{\ch_{\fz}^{\dz}(Q)^{1-\frac{1}{q}}}\int_Qw(x)\,d\ch_{\fz}^{\dz}\\
&\ls\frac{1}{\ch_{\fz}^{\dz}(Q)^{1-\frac{1}{q}}}\int_Qw_N(x)\,d\ch_{\fz}^{\dz}.
\end{align*}
Hence \eqref{eq831-5} holds in this case.

\textbf{Case 2)} $\int_{E_Q}w(x)\,d\ch_{\fz}^{\dz}> \frac{1}{2}\int_Qw(x)\,d\ch_{\fz}^{\dz}$. Then, by H\"older's inequality, \eqref{8e26} and \eqref{8e30}, we have
\begin{align*}
\int_Qw(x)\,d\ch_{\fz}^{\dz}&\leq 2\int_{E_Q}w(x)\,d\ch_{\fz}^{\dz}
\ls \lf(\int_{E_Q}w(x)^{q}\,d\ch_{\fz}^{\dz}\r)^{\frac{1}{q}}\ch_{\fz}^{\dz}(E_Q)^{1-\frac1q}\\
&\ls\int_Qw(x)\,d\ch_{\fz}^{\dz}\lf(\frac{\ch_{\fz}^{\dz}(E_Q)}{\ch_{\fz}^{\dz}(Q)} \r)^{1-\frac{1}{q}}.
\end{align*}
This yields $\ch_{\fz}^{\dz}(E_Q)\gs \ch_{\fz}^{\dz}(Q)$. Consequently,
\[
\int_Qw_N(x)\,d\ch_{\fz}^{\dz}\ge \int_{E_Q}w_N(x)\,d\ch_{\fz}^{\dz}=N\ch_{\fz}^{\dz}(E_Q)\gs N\ch_{\fz}^{\dz}(Q).
\]
Namely,
$
N\ls \fint_Qw_N(x)\,d\ch_{\fz}^{\dz}.
$
Therefore,
\begin{align*}
\lf(\int_Qw_N(x)^q\,d\ch_{\fz}^{\dz}\r)^{\frac1q}\leq N\ch_{\fz}^{\dz}(Q)^{\frac1q}\ls\frac{1}{\ch_{\fz}^{\dz}(Q)^{1-\frac{1}{q}}}\int_Qw_N(x)\,d\ch_{\fz}^{\dz}.
\end{align*}
Thus \eqref{eq831-5} also holds in this case, which completes the proof of Lemma \ref{lm-930-1}.
\end{proof}

We next show Theorem \ref{them0923-1}.

\begin{proof}[Proof of Theorem \ref{them0923-1}]
We complete the proof by three steps as below. Our proof strategy follows the ideas in \cite{PA07,PAJMM07}. However, the current framework---beyond Lebesgue measures---introduces new challenges: the Hausdorff content is non-additive and the Choquet integral is nonlinear. 

\textbf{Step 1)} We prove the inequality \eqref{eq923-5}. It suffices to consider the case $H_2=G$. Indeed, for general $H_2$ and $G$, set $\widetilde G=G+H_2$. Then \eqref{eq923-2} holds with $\widetilde G$ in place of $H_2$ and \eqref{eq923-4} holds with $\widetilde G$ in place of $G$. Thus, it remains only to handle the equality case.

For any $\lambda>0$, let
\[E_{\lambda}:=\lf\{x\in\rn: \cm_{\ch_{\fz}^{\dz}}F(x)>\lambda\r\}\cup\lf\{x\in\rn: \cm_{\ch_{\fz}^{\dz}}H_1(x)>\lambda\r\}.\]
By Remark \ref{rem925-1}(ii), $E_{\lambda}$ is open. We may assume without loss of generality that $E_{\lambda}\neq \emptyset$ and $E_{\lambda}\neq \rn$. Then by Whitney's decomposition, there exists a sequence of disjoint cubes $\{Q_j\}_{j\in\nn}$ such that
$E_{\lambda}=\cup_{j\in\nn}Q_j$ and
for every $j\in\nn$, one has $4Q_j\cap E^c_{\lambda}\neq \emptyset$, that is, there exists $x_j\in 4Q_j$ with
\begin{align}\label{eq925-2}
\cm_{\ch_{\fz}^{\dz}}F(x_j)\leq\lambda\quad\text{and} \quad \cm_{\ch_{\fz}^{\dz}}H_1(x_j)\leq\lambda.
\end{align}
Let
\[B_{\lambda}:=\lf\{x\in\rn: \cm_{\ch_{\fz}^{\dz}}F(x)>K\lambda,\ 2G(x)\le \gamma\lambda\r\}.\]
Then, to prove \eqref{eq923-5}, we only need to show
\begin{align}\label{9e02}
w_{\ch_\fz^\delta}(B_{\lambda})
\ls\Bigl(\frac{a^q}{K^q}+\frac{\gamma}{K}\Bigr)^{\frac{1}{s}}w_{\ch_{\fz}^{\dz}}(E_{\lambda}).
\end{align}

For this purpose, we first claim that for any dyadic cube $P$, if there exists $j\in\nn$ such that $Q_j\subset P$, then there exists a positive constant $C$, independent of $P$, such that
\begin{align}\label{eq925-3}
\ch_{\fz}^{\dz}(B_{\lambda}\cap P)\leq C\lf(\frac{a^q}{K^q}+\frac{\gamma}{K}\r)\ch_{\fz}^{\dz}(P).
\end{align}
Applying this and Lemma \ref{lm0922-2} for $w\in\rh_{s', \dz}$, we immediately obtain
\begin{align}\label{eq925-7}
w_{\ch_{\fz}^{\dz}}(B_{\lambda}\cap P)\ls \lf(\frac{a^q}{K^q}+\frac{\gamma}{K}\r)^{\frac1s}w_{\ch_{\fz}^{\dz}}(P).
\end{align}

To prove this claim, we may assume $Q_j\subset P$ and 
$B_{\lambda}\cap P\neq \emptyset$. Then there exists $x^{*}_j\in P$ such that $G(x^*_j)\le \frac{\gamma\lambda}{2}$. From \eqref{eq925-2}, for any cube $Q$ containing $x_j$,
\begin{align}\label{eq925-x}
\fint_{Q}F(x)\,d\ch_{\fz}^{\dz}\leq \lambda.
\end{align}
Choose $K_0\ge C_0$, where $C_0$ is the positive implicit constant in Remark \ref{rem925-1}(iii). For any $x\in B_{\lambda}\cap P$, by Remark \ref{rem925-1}(iii), $K\ge K_0$, and \eqref{eq925-x},
\begin{align}\label{eq925-b}
K\lambda<\cm_{\ch_{\fz}^{\dz}}F(x)\leq C_0\cm^*_{\ch_{\fz}^{\dz}}F(x)=C_0\sup_{Q(x,r)\notni x_j}\fint_{Q(x, r)}F(y)\,d\ch_{\fz}^{\dz}.
\end{align}
Since $x_j\in4Q_j\subset4P$ and $x\in P$, then any cube $Q(x, r)$ not containing $x_j$ satisfies $Q(x,r)\subset 5P$. 
Moreover, \eqref{eq925-b} infers that,
\[K\lambda<C_0\sup_{Q(x,r)\notni x_j}\fint_{Q(x, r)}F(y)\mathbf{1}_{5P}(y)\,d\ch_{\fz}^{\dz}\leq C_0\cm_{\ch_{\fz}^{\dz}}(F\mathbf{1}_{5P})(x),\]
which implies
\[B_{\lambda}\cap P\subset \lf\{x\in\rn: \cm_{\ch_{\fz}^{\dz}}(F\mathbf{1}_{5P})(x)\ge \frac{K\lambda}{C_0}\r\}.\]
Using this, Remark \ref{rem925-1}(iv), and \eqref{eq923-1}, we find that
\begin{align}\label{eq925-1}
\ch_{\fz}^{\dz}(B_{\lambda}\cap P)
&\leq \ch_{\fz}^{\dz}\Bigl(\Bigl\{x\in\rn: \cm_{\ch_{\fz}^{\dz}}(F\mathbf{1}_{5P})(x)\ge \frac{K\lambda}{C_0}\Bigr\}\Bigr)\\
&\leq \ch_{\fz}^{\dz}\Bigl(\Bigl\{x\in\rn: \cm_{\ch_{\fz}^{\dz}}(G_{5P}\mathbf{1}_{5P})(x)\ge \frac{K\lambda}{4C_0}\Bigr\}\Bigr)\noz\\
&\quad+\ch_{\fz}^{\dz}\Bigl(\Bigl\{x\in\rn: \cm_{\ch_{\fz}^{\dz}}(H_{5P}\mathbf{1}_{5P})(x)\ge \frac{K\lambda}{4C_0}\Bigr\}\Bigr)=: I_1+I_2.\noz
\end{align}
For $I_1$, from Remark \ref{rem925-1}(v) with $p=1$, \eqref{eq923-4}, and $x_j^*\in P\subset 5P$, we deduce that
\begin{align}\label{eq925-5}
I_1\ls \frac{1}{K\lambda}\int_{5P}G_{5P}(x)\,d\ch_{\fz}^{\dz}\ls  \frac{1}{K\lambda}\ch_{\fz}^{\dz}(5P)G(x_j^*)\ls \frac{\gamma}{K}\ch_{\fz}^{\dz}(P).
\end{align}
For $I_2$, by Remark \ref{rem925-1}(v) with $p:=q\in(1, \fz)$, \eqref{eq923-2}, \eqref{eq925-2}, $\gamma\in(0, 1)$, and $x_j, x_j^*\in 5P$,
\begin{align}\label{eq925-4}
I_2
&\ls\frac{1}{(K\lambda)^q}\int_{5P}H_{5P}(x)^q\,d\ch_{\fz}^{\dz}\\
&\ls \Bigl(\frac{a}{K\lambda}\Bigr)^q\ch_{\fz}^{\dz}(5P)\lf[\cm_{\ch_{\fz}^{\dz}}F(x_j)+\cm_{\ch_{\fz}^{\dz}}H_1(x_j)+G(x_j^*)\r]^q\ls\frac{a^q}{K^q}\ch_{\fz}^{\dz}(P).\noz
\end{align}
When $q=\fz$, by Remark \ref{rem925-1}(vi), \eqref{eq923-2}, $\gamma\in(0, 1)$, and $x_j, x_j^*\in 5P$, we have
\[\lf\|\cm_{\ch_{\fz}^{\dz}}(H_{5P}\mathbf{1}_{5P})\r\|_{L^{\fz}(\rn, \ch_{\fz}^{\dz})}\leq\lf\|H_{5P}\r\|_{L^{\fz}(5P, \ch_{\fz}^{\dz})}\leq a\lf[\cm_{\ch_{\fz}^{\dz}}F(x_j)+\cm_{\ch_{\fz}^{\dz}}H_1(x_j)+H_2(x_j^*)\r]\leq 3a\lambda.\]
Choosing $K_0=12aC_0$ gives $I_2=0$.
Combining \eqref{eq925-1}, \eqref{eq925-5}, and \eqref{eq925-4}, we conclude that \eqref{eq925-3} holds, and hence the claim is valid.

From Theorem \ref{prop-0916-1}, there exists $r\in[1, \fz)$ such that $w\in\ca_{r, \dz}$. Thus, applying Lemma \ref{lm925-3} to $\{Q_j\}_{j\in\nn}$, we obtain a subfamily $\{Q_{j_v}\}_{v}$ and non-overlapping dyadic cubes $\{Q^*_{k}\}_{k}$ such that
\begin{enumerate}
\item[{\rm(i)}]
$\bigcup_{j\in\nn}Q_j\subset (\bigcup_{v}Q_{j_v})\cup (\bigcup_{k}Q^*_k);$
\item[{\rm(ii)}]
there exists a positive constant $K$ with
$\sum_{v}w_{\mathcal H^{\delta}_{\infty}}(Q_{j_v})\le Kw_{\mathcal H^{\delta}_{\infty}}\Bigl(\bigcup_{v}Q_{j_v}\Bigr);$
\item[{\rm(iii)}]
for every $Q_k^*$, there exists $j\in\nn$ such that $Q_j\subset Q_k^*$ and
$w _{\mathcal H^\delta_\infty}(Q_k^*)\le \sum_{Q_{j_v}\subset Q_k^*} w _{\mathcal H^\delta_\infty}(Q_{j_v}).$
\end{enumerate}
Thus, by the above claim, \eqref{eq925-7} holds for all $Q_{j_v}$ and $Q_k^*$.

Note that $K\ge K_0>a\ge 1$. Thus $B_{\lambda}\subset E_{\lambda}$. Then by (i), we obtain
\[B_{\lambda}\subset \bigcup_{j\in\nn}(B_{\lambda}\cap Q_j)\subset \bigcup_{v}(B_{\lambda}\cap Q_{j_v})\cup \bigcup_{k}(B_{\lambda}\cap Q^*_k).\]
This, together with \eqref{eq925-7} for $Q_{j_v}$ and $Q_k^*$, (iii), and (ii), further implies that
\begin{align*}
w_{\ch_\fz^\delta}(B_{\lambda})
&\leq \sum_{v}w_{\ch_\fz^\delta}(B_{\lambda}\cap Q_{j_v})+\sum_{k}w_{\ch_\fz^\delta}(B_{\lambda}\cap Q^*_k)\\
&\ls \Bigl(\frac{a^q}{K^q}+\frac{\gamma}{K}\Bigr)^{\frac1s}\lf[\sum_{v}w_{\ch_\fz^\delta}(Q_{j_v})+\sum_{k}w_{\ch_\fz^\delta}(Q^*_k)\r]\\
&\ls \Bigl(\frac{a^q}{K^q}+\frac{\gamma}{K}\Bigr)^{\frac1s}\lf[\sum_{v}w_{\ch_\fz^\delta}(Q_{j_v})+\sum_{k}\sum_{Q_{j_v}\subset Q_k^*}w_{\ch_\fz^\delta}(Q_{j_v})\r]\\
&\ls \Bigl(\frac{a^q}{K^q}+\frac{\gamma}{K}\Bigr)^{\frac1s}\sum_{v}w_{\ch_\fz^\delta}(Q_{j_v})\ls \Bigl(\frac{a^q}{K^q}+\frac{\gamma}{K}\Bigr)^{\frac1s}w_{\ch_\fz^\delta}\Bigl(\bigcup_v Q_{j_v}\Bigr)\\
&\ls \Bigl(\frac{a^q}{K^q}+\frac{\gamma}{K}\Bigr)^{\frac1s}w_{\ch_\fz^\delta}(E_{\lambda}),
\end{align*}
which proves \eqref{9e02}, and hence of \eqref{eq923-5}.

\textbf{Step 2)} We now show \eqref{eq926-1} and \eqref{eq926-2} via \eqref{eq923-5}. For any $K\ge K_0$, $\lambda\in (0, \fz)$, $\gamma\in(0, 1)$, by \eqref{eq923-5}, we have
\begin{align}\label{eq926-4}
&w_{\ch_{\fz}^{\dz}}\bigl\{x\in\rn: \cm_{\ch_{\fz}^{\dz}}F(x)>K\lambda\bigr\}\\
&\quad\leq w_{\ch_{\fz}^{\dz}}\bigl\{x\in\rn: \cm_{\ch_{\fz}^{\dz}}F(x)>K\lambda,\, G(x)+H_2(x)\leq \gamma\lambda\bigr\}\noz\\
&\quad\quad+w_{\ch_{\fz}^{\dz}}\bigl\{x\in\rn: G(x)+H_2(x)>\gamma\lambda\bigr\}\noz\\
&\quad\leq C\Bigl(\frac{a^q}{K^q}+\frac{\gamma}{K}\Bigr)^{\frac{1}{s}}w_{\ch_{\fz}^{\dz}}\bigl\{x\in\rn: \cm_{\ch_{\fz}^{\dz}}F(x)+\cm_{\ch_{\fz}^{\dz}}H_1(x)>\lambda\bigr\}\noz\\
&\quad\quad+w_{\ch_{\fz}^{\dz}}\bigl\{x\in\rn: G(x)+H_2(x)>\gamma\lambda\bigr\},\noz
\end{align}
where $C$ is as in \eqref{eq923-5}. This, combined with \eqref{eq1112-3}, Remark \ref{rem-11112-1}(iii), and Remark \ref{8rem29}, implies that
\begin{align}\label{eq926-3}
&\int_{\rn}\bigl|\cm_{\ch_{\fz}^{\dz}}F(x)\bigr|^pw(x)\,d\ch_{\fz}^{\dz}\\
&\quad=4pK^p\int_{0}^{\fz}\lambda^{p-1}w_{\ch_{\fz}^{\dz}}\bigl\{x\in\rn: \cm_{\ch_{\fz}^{\dz}}F(x)>K\lambda\bigr\}\,d\lambda\noz\\
&\quad\le4pCK^p\Bigl(\frac{a^q}{K^q}+\frac{\gamma}{K}\Bigr)^{\frac{1}{s}}\int_{0}^{\fz}\lambda^{p-1}w_{\ch_{\fz}^{\dz}}\bigl\{x\in\rn: \cm_{\ch_{\fz}^{\dz}}F(x)+\cm_{\ch_{\fz}^{\dz}}H_1(x)>\lambda\bigr\}\,d\lambda\noz\\
&\quad\quad+4pK^p\int_{0}^{\fz}\lambda^{p-1}w_{\ch_{\fz}^{\dz}}\bigl\{x\in\rn: G(x)+H_2(x)>\gamma\lambda\bigr\}\,d\lambda\noz\\
&\quad=4CK^p\Bigl(\frac{a^q}{K^q}+\frac{\gamma}{K}\Bigr)^{\frac{1}{s}}\int_{\rn}\lf[\cm_{\ch_{\fz}^{\dz}}F(x)+\cm_{\ch_{\fz}^{\dz}}H_1(x)\r]^p\,dw_{\ch_{\fz}^{\dz}}\noz\\
&\quad\quad+\frac{4K^p}{\gamma^p}\int_{\rn}\lf[G(x)+H_2(x)\r]^p\,dw_{\ch_{\fz}^{\dz}}\noz\\
&\quad\leq CW_0K^p(2^{p+2}+8)\Bigl(\frac{a^q}{K^q}+\frac{\gamma}{K}\Bigr)^{\frac{1}{s}}\lf(\|\cm_{\ch_{\fz}^{\dz}}F\|^p_{L^p_w(\rn, \ch_{\fz}^{\dz})}+\|\cm_{\ch_{\fz}^{\dz}}H_1\|^p_{L^p_w(\rn, \ch_{\fz}^{\dz})}\r)\noz\\
&\quad\quad+\frac{W_0K^p}{\gamma^p}(2^{p+2}+8)\lf(\|G\|^p_{L^p_w(\rn, \ch_{\fz}^{\dz})}+\|H_2\|^p_{L^p_w(\rn, \ch_{\fz}^{\dz})}\r),\noz
\end{align}
where $W_0$ is as in Remark \ref{8rem29}.
Note that
\[CW_0K^p(2^{p+2}+8)\Bigl(\frac{a^q}{K^q}+\frac{\gamma}{K}\Bigr)^{\frac{1}{s}}\leq CW_0(2^{p+2}+8)a^{\frac{q}{s}}K^{p-\frac{q}{s}}+CW_0(2^{p+2}+8)\gamma^{\frac1s}K^{p-\frac1s}\]
and $p\in(0,\frac{q}{s})$. Choose $K$ large enough such that
\[CW_0(2^{p+2}+8)a^{\frac{q}{s}}K^{p-\frac{q}{s}}\le \frac14.\]
Fixing this $K$, choose $\gamma$ small enough such that
\[CW_0(2^{p+2}+8)\gamma^{\frac1s}K^{p-\frac1s}\leq \frac14.\]
Then
\[CW_0K^p(2^{p+2}+8)\Bigl(\frac{a^q}{K^q}+\frac{\gamma}{K}\Bigr)^{\frac{1}{s}}\leq \frac12.\]
Then by the assumption $\|\cm_{\ch_{\fz}^{\dz}}F\|_{L_{w}^{p}(\rn, \ch_{\fz}^{\dz})}<\fz$ and rearranging \eqref{eq926-3}, we obtain \eqref{eq926-1}. Similarly, \eqref{eq926-2} also follows from \eqref{eq926-4}.

\textbf{Step 3)} We
finally prove \eqref{eq926-1} and \eqref{eq926-2} for $p\ge 1$ and $F\in L^{1}(\rn, \ch_{\fz}^{\dz})$. It suffices to show that $\cm_{\ch_{\fz}^{\dz}}F\in L^p_w(\rn, \ch_{\fz}^{\dz})$ by Step 2).
To this end, we may assume without loss of generality that $\cm_{\ch_{\fz}^{\dz}}H_1$, $G$, $H_2\in L^p_w(\rn, \ch_{\fz}^{\dz})$. We first show that $\cm_{\ch_{\fz}^{\dz}}F\in L^p_w(\rn, \ch_{\fz}^{\dz})$ when $w\in L^{\fz}(\rn, \ch_{\fz}^{\dz})$. For any $\lambda>0$, let
\[f(\lambda):=p\lambda^pw_{\ch_{\fz}^{\dz}}\bigl(\bigl\{x\in\rn: \cm_{\ch_{\fz}^{\dz}}F(x)>\lambda\bigr\}\bigr).\]
For any $0<\lambda_{1}<\lambda_2<\fz$, by Remark \ref{rem925-1}(v) with $p=1$,
\begin{align}\label{eq101-1}
\int_{\lambda_1}^{\lambda_2}f(\lambda)\,\frac{d\lambda}{\lambda}
&\leq \frac{p\lambda_2^p(\lambda_2-\lambda_1)}{\lambda_1}w_{\ch_{\fz}^{\dz}}\bigl(\bigl\{x\in\rn: \cm_{\ch_{\fz}^{\dz}}F(x)>\lambda_1\bigr\}\bigr)\\
&\leq \frac{p\lambda_2^p(\lambda_2-\lambda_1)}{\lambda_1}\|w\|_{L^{\fz}(\rn, \ch_{\fz}^{\dz})}\ch_{\fz}^{\dz}\bigl(\bigl\{x\in\rn: \cm_{\ch_{\fz}^{\dz}}F(x)>\lambda_1\bigr\}\bigr)\noz\\
&\ls \frac{p\lambda_2^p(\lambda_2-\lambda_1)}{\lambda_1^2}\|w\|_{L^{\fz}(\rn, \ch_{\fz}^{\dz})}\|F\|_{L^{1}(\rn, \ch_{\fz}^{\dz})}<\fz.\noz
\end{align}
By \eqref{eq923-5}, choosing large $K$ and small $\gamma$ such that
\[C2^pK^p\Bigl(\frac{a^q}{K^q}+\frac{\gamma}{K}\Bigr)^{\frac{1}{s}}\leq \frac12,\]
and Remark \ref{8rem29}, we obtain
\begin{align}\label{eq930-1}
&\int_{\lambda_1}^{\lambda_2}f(\lambda)\,\frac{d\lambda}{\lambda}=\int_{\frac{\lambda_1}{K}}^{\frac{\lambda_2}{K}}f(K\lambda)\,\frac{d\lambda}{\lambda}\\
&\quad\leq CK^p\Bigl(\frac{a^q}{K^q}+\frac{\gamma}{K}\Bigr)^{\frac{1}{s}}\int_{\frac{\lambda_1}{K}}^{\frac{\lambda_2}{K}}p\lambda^{p-1} w_{\ch_{\fz}^{\dz}}\bigl\{x\in\rn: \cm_{\ch_{\fz}^{\dz}}F(x)+\cm_{\ch_{\fz}^{\dz}}H_1(x)>\lambda\bigr\}\,d\lambda\noz\\
&\quad\quad+K^p\int_{\frac{\lambda_1}{K}}^{\frac{\lambda_2}{K}}p\lambda^{p-1} w_{\ch_{\fz}^{\dz}}\bigl\{x\in\rn: G(x)+H_2(x)>\gamma\lambda\bigr\}\,d\lambda\noz\\
&\quad\leq CK^p\Bigl(\frac{a^q}{K^q}+\frac{\gamma}{K}\Bigr)^{\frac{1}{s}}\int_{\frac{\lambda_1}{K}}^{\frac{\lambda_2}{K}}p\lambda^{p-1} w_{\ch_{\fz}^{\dz}}\lf\{x\in\rn: \cm_{\ch_{\fz}^{\dz}}F(x)>\frac{\lambda}{2}\r\}\,d\lambda\noz\\
&\quad\quad+CK^p\Bigl(\frac{a^q}{K^q}+\frac{\gamma}{K}\Bigr)^{\frac{1}{s}}\int_{\frac{\lambda_1}{K}}^{\frac{\lambda_2}{K}}p\lambda^{p-1} w_{\ch_{\fz}^{\dz}}\lf\{x\in\rn: \cm_{\ch_{\fz}^{\dz}}H_1(x)>\frac{\lambda}{2}\r\}\,d\lambda\noz\\
&\quad\quad+K^p\int_{\frac{\lambda_1}{K}}^{\frac{\lambda_2}{K}}p\lambda^{p-1} w_{\ch_{\fz}^{\dz}}\lf\{x\in\rn: G(x)>\frac{\gamma\lambda}{2}\r\}\,d\lambda\noz\\
&\quad\quad+K^p\int_{\frac{\lambda_1}{K}}^{\frac{\lambda_2}{K}}p\lambda^{p-1} w_{\ch_{\fz}^{\dz}}\lf\{x\in\rn: H_2(x)>\frac{\gamma\lambda}{2}\r\}\,d\lambda\noz\\
&\quad\leq \frac12\int_{\frac{\lambda_1}{2K}}^{\frac{\lambda_2}{2K}}f(\lambda)\,\frac{d\lambda}{\lambda}+S\noz
\end{align}
where
\[S:=\frac{W_0}2\|\cm_{\ch_{\fz}^{\dz}}H_1\|^{p}_{L^p_w(\rn, \ch_{\fz}^{\dz})}+\frac{2^p{W_0}K^p}{\gamma^p}\lf(\|G\|^{p}_{L^p_w(\rn, \ch_{\fz}^{\dz})}+\|H_2\|^{p}_{L^p_w(\rn, \ch_{\fz}^{\dz})}\r)<\fz.\]
Taking $\lambda_1=K^{-n}$ and $\lambda_{2}=K^{m}$ for any $n,m\in\nn$, \eqref{eq930-1} gives
\begin{align*}
\int_{K^{-n}}^{K^{m-1}}f(\lambda)\,\frac{d\lambda}{\lambda}\leq \int_{K^{-n}}^{K^{m}}f(\lambda)\,\frac{d\lambda}{\lambda}\leq \frac12\int_{\frac{K^{-n-1}}{2}}^{K^{m-1}}f(\lambda)\,\frac{d\lambda}{\lambda}+S.
\end{align*}
Using this, $K\ge1$, and \eqref{eq101-1}, we have
\[\int_{K^{-n}}^{K^{m-1}}f(\lambda)\,\frac{d\lambda}{\lambda}\leq \int_{\frac{K^{-n-1}}{2}}^{K^{-n}}f(\lambda)\,\frac{d\lambda}{\lambda}+2S\leq K'\|w\|_{L^{\fz}}\|F\|_{L^{1}(\rn, \ch_{\fz}^{\dz})}+2S<\fz,\]
with $K'$ independent of $m,n$. Hence it follows from Remark \ref{8rem29} that
\[\|\cm_{\ch_{\fz}^{\dz}}F\|^p_{L^p_w(\rn, \ch_{\fz}^{\dz})}\le4\lim_{n, m\to \fz}\int_{K^{-n}}^{K^{m-1}}f(\lambda)\,\frac{d\lambda}{\lambda}<\fz.\]
Thus, $\cm_{\ch_{\fz}^{\dz}}F\in L^p_w(\rn, \ch_{\fz}^{\dz})$ and hence $\cm_{\ch_{\fz}^{\dz}}F\in L^{p, \fz}_w(\rn, \ch_{\fz}^{\dz})$.

Finally, we drop the condition $w\in L^{\fz}(\rn, \ch_{\fz}^{\dz})$.
For any $N\in \nn$, let $w_N:=\min\{w, N\}$. By Lemma \ref{lm-930-1}, inequalities \eqref{eq926-1} and \eqref{eq926-2} hold with $w_N$ in place of $w$, with constants independent of $N$. Since $w_N\uparrow w$ as $N\to \fz$, we complete the proof by (ii) and (iv) of Remark \ref{12-4}.
\end{proof}

\section{Applications}\label{s5}

In this section, we show some direct applications of Theorem \ref{them0923-1}. We begin with the following weighted Fefferman--Stein inequality with Hausdorff content.
Let $f\in L_{\loc}^1(\rn, \ch_{\fz}^{\dz})$. For any $x\in\rn$, define the \emph{sharp maximal operator} $\cm^{\sharp}_{\ch_{\fz}^{\dz}}$ as
\[\cm^{\sharp}_{\ch_{\fz}^{\dz}}f(x):=\sup_{Q\ni x}\inf_{c\in\mathbb R}\fint_{Q}|f(y)-c|\,d\ch_{\fz}^{\dz}.\]

\begin{thm}\label{tm101-1}
(Weighted Fefferman--Stein inequality with Hausdorff content) Let $w\in\ca_{\fz, \dz}$ and $p\in(0, \fz)$. Then there exists a positive constant $C$ such that whenever $\cm_{\ch_{\fz}^{\dz}}f\in L^p_w(\rn, \ch_{\fz}^{\dz})$,
\begin{align}\label{eq101-2}
\|\cm_{\ch_{\fz}^{\dz}}f\|_{L^p_w(\rn, \ch_{\fz}^{\dz})}\leq C\|\cm_{\ch_{\fz}^{\dz}}^{\sharp}f\|_{L^p_w(\rn, \ch_{\fz}^{\dz})}.
\end{align}
\end{thm}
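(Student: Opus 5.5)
We apply Theorem \ref{them0923-1} with $q=\fz$, where by Remark \ref{rem318-1}(ii) the theorem holds for every $w\in\ca_{\fz,\dz}$ and every $p\in(0,\fz)$. So the task is to choose $F,G,H_1,H_2$ and, for each cube $Q$, functions $G_Q,H_Q$ so that \eqref{eq923-1}--\eqref{eq923-4} hold.

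\textbf{Choice of functions.} Take $F:=|f|$, $H_1:=0$, $H_2:=0$ and $G:=C\cm^{\sharp}_{\ch_{\fz}^{\dz}}f$ for a suitable constant $C$. For a cube $Q$, pick $c_Q\in\mathbb R$, nearly optimal in the infimum defining $\cm^{\sharp}$ on $Q$. Then set
\[
G_Q:=|f-c_Q|,\qquad H_Q:=|c_Q|.
\]

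\textbf{Verifying the hypotheses.}
\begin{enumerate}
\item[(a)] Condition \eqref{eq923-1} holds pointwise, since $|f|\le |f-c_Q|+|c_Q|$.
\item[(b)] For \eqref{eq923-4}, for each $x\in Q$ we have $\fint_Q G_Q\,d\ch_{\fz}^{\dz}\le 2\inf_c\fint_Q|f-c|\,d\ch_{\fz}^{\dz}\le 2\cm^{\sharp}_{\ch_{\fz}^{\dz}}f(x)$. We choose $c_Q$ attaining the infimum up to a factor $2$; if the infimum is $0$, we take $c_Q$ attaining it, using a compactness argument in $c$.
\item[(c)] For \eqref{eq923-2} with $q=\fz$, we need $|c_Q|\le a\,\cm_{\ch_{\fz}^{\dz}}|f|(x)$ for all $x\in Q$. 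Here
\[
|c_Q|\,\ch_{\fz}^{\dz}(Q)=\int_Q|c_Q|\,d\ch_{\fz}^{\dz}\le 2\Bigl(\int_Q|f|\,d\ch_{\fz}^{\dz}+\int_Q|f-c_Q|\,d\ch_{\fz}^{\dz}\Bigr)
\]
by Remark \ref{rem-11112-1}(iii). Moreover $\fint_Q|f-c_Q|\,d\ch_{\fz}^{\dz}\le 2\fint_Q|f-0|\,d\ch_{\fz}^{\dz}$ by near-optimality of $c_Q$ (compare with $c=0$). Hence $|c_Q|\le 6\fint_Q|f|\,d\ch_{\fz}^{\dz}\le 6\,\cm_{\ch_{\fz}^{\dz}}f(x)$, giving \eqref{eq923-2} with $a=6$ and $H_2=0$.
\end{enumerate}

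\textbf{Conclusion.} Theorem \ref{them0923-1} then yields \eqref{eq926-1}:
\[
\|\cm_{\ch_{\fz}^{\dz}}f\|_{L^p_w(\rn,\ch_{\fz}^{\dz})}\ls \|\cm^{\sharp}_{\ch_{\fz}^{\dz}}f\|_{L^p_w(\rn,\ch_{\fz}^{\dz})},
\]
under the standing assumption $\cm_{\ch_{\fz}^{\dz}}f\in L^p_w$, which is exactly the a priori finiteness hypothesis of the theorem. Here we use $\cm_{\ch_{\fz}^{\dz}}F=\cm_{\ch_{\fz}^{\dz}}f$.

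\textbf{Main obstacle.} The delicate point is that with $q=\fz$ the $L^\fz$ bound on $H_Q$ must hold for \emph{all} points of $Q$. The constant choice $H_Q=|c_Q|$ handles this cleanly, so the only real care is the near-optimal choice of $c_Q$ in the nonlinear Choquet setting. Minimizing over $c$ and comparing with $c=0$ uses only the quasi-triangle inequality of Remark \ref{rem-11112-1}(iii) and $\ch_{\fz}^{\dz}$-monotonicity, so no additivity is needed.
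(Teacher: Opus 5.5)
Your proposal is correct and is essentially the paper's proof. It uses the same choices $F=|f|$, $G=2\cm^{\sharp}_{\ch_{\fz}^{\dz}}f$, $H_1=H_2=0$, $G_Q=|f-c_Q|$, $H_Q=|c_Q|$, the same bound $|c_Q|\le 6\fint_Q|f|\,d\ch_{\fz}^{\dz}$, and the same application of Theorem \ref{them0923-1} with $q=\fz$, which is valid for every $p$ and every $w\in\ca_{\fz,\dz}$ via Theorem \ref{prop-0916-1}. The only difference is that the paper quotes \cite[Lemma 3.1]{ZZH-2} for a near-minimizing constant, whereas your compactness argument works directly: $c\mapsto\int_Q|f-c|\,d\ch_{\fz}^{\dz}$ is $\ch_{\fz}^{\dz}(Q)$-Lipschitz by Remark \ref{rem-11112-1}(iv) and tends to infinity as $|c|\to\fz$, so it even attains its minimum.
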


\begin{proof}
For any cube $Q\subset\rn$, by Remark \ref{12-4}(ii) and \cite[Lemma 3.1]{ZZH-2} with $w\equiv 1$, there exists a constant $C_Q\in\mathbb R$ such that
\begin{align}\label{eq101-3}
\fint_Q|f(y)-C_Q|\,d\ch_{\fz}^{\dz}\leq 2\inf_{c\in\mathbb R}\fint_Q|f(y)-c|\,d\ch_{\fz}^{\dz}.
\end{align}
For any $x\in\rn$, set
$F(x):=|f(x)|$, $G(x):=2\cm_{\ch_{\fz}^{\dz}}^{\sharp}f(x)$, and $H_1(x)=H_2(x)\equiv 0$.
Then, for any cube $Q$, the non-negative functions
\[G_Q(x):=|f(x)-C_Q|\mathbf{1}_Q(x)\quad\text{and}\quad H_Q(x):=|C_Q|\mathbf{1}_Q(x)\]
satisfy:
\begin{enumerate}
\item[{\rm(i)}]
$F(x)=|f(x)|\leq |f(x)-C_Q|+ |C_Q|\leq G_Q(x)+ H_Q(x),\ \forall\ x\in Q;$
\item[{\rm(ii)}]
applying Remark \ref{12-4}(iii) and \eqref{eq101-3}, we have, for any $x,\bar{x}\in Q$, 
\begin{align*}
\|H_Q\|_{L^{\fz}(Q, \ch_{\fz}^{\dz})}
&=|C_Q|\leq 2\fint_Q|f(y)-C_Q|\,d\ch_{\fz}^{\dz}+2\fint_Q|f(y)|\,d\ch_{\fz}^{\dz}\\
&\leq4\inf_{c\in\mathbb R}\fint_Q|f(y)-c|\,d\ch_{\fz}^{\dz}+2\fint_Q|f(y)|\,d\ch_{\fz}^{\dz}\\
&\leq 6\lf[\cm_{\ch_{\fz}^{\dz}}F(x)+\cm_{\ch_{\fz}^{\dz}}H_1(x)+H_2(\bar{x})\r];
\end{align*}
\item[{\rm(iii)}]
by \eqref{eq101-3} again, for any $x\in Q$, we find that
\begin{align*}
\fint_{Q}G_Q(y)\,d\ch_{\fz}^{\dz}\leq 2\inf_{c\in\mathbb R}\fint_Q|f(y)-c|\,d\ch_{\fz}^{\dz}\leq G(x).
\end{align*}
\end{enumerate}
Thus, the conditions \eqref{eq923-1}, \eqref{eq923-2} for $q=\fz$, and \eqref{eq923-4} in Theorem \ref{them0923-1} hold.
By Theorem \ref{prop-0916-1}, there exists $s\in [1,\fz)$ such that $w\in\rh_{s', \dz}$. Applying \eqref{eq926-1} of Theorem \ref{them0923-1}, we deduce that, for any $p\in(0,\fz)$,
\[\|\cm_{\ch_{\fz}^{\dz}}f\|_{L^p_w(\rn, \ch_{\fz}^{\dz})}=\|\cm_{\ch_{\fz}^{\dz}}F\|_{L^p_w(\rn, \ch_{\fz}^{\dz})}\ls \|G\|_{L^p_w(\rn, \ch_{\fz}^{\dz})}\sim\|\cm_{\ch_{\fz}^{\dz}}^{\sharp}f\|_{L^p_w(\rn, \ch_{\fz}^{\dz})}.\]
This completes the proof of Theorem \ref{tm101-1}.
\end{proof}

Recall that \cite[Corollary 4.4]{HZZ25} yields that for $\ch_{\fz}^{\dz}$-a.e. $x\in\rn,$
$|f(x)|\ls \cm_{\ch_{\fz}^{\dz}}f(x).$ This, together with Theorem \ref{tm101-1}, implies the following corollary.

\begin{cor}\label{tm101-1}
Let $w\in\ca_{\fz, \dz}$ and $p\in(0, \fz)$. Then there exists a positive constant $C$ such that whenever $f\in L^p_w(\rn, \ch_{\fz}^{\dz})$,
\begin{align}\label{9e5}
\|f\|_{L^p_w(\rn, \ch_{\fz}^{\dz})}\leq C\|\cm_{\ch_{\fz}^{\dz}}^{\sharp}f\|_{L^p_w(\rn, \ch_{\fz}^{\dz})}.
\end{align}
\end{cor}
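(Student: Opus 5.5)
The plan is to deduce \eqref{9e5} from Theorem \ref{tm101-1} together with the pointwise bound $|f(x)|\ls \cm_{\ch_{\fz}^{\dz}}f(x)$ for $\ch_{\fz}^{\dz}$-a.e. $x$ from \cite[Corollary 4.4]{HZZ25}.

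\textbf{Left-hand side.} Since the exceptional set has Hausdorff content zero, it is null for $w_{\ch_{\fz}^{\dz}}$. It is also null for the Choquet integral of $|f|^pw$ with respect to $\ch_{\fz}^{\dz}$. Monotonicity of the Choquet integral then gives
\[
\|f\|_{L^p_w(\rn, \ch_{\fz}^{\dz})}\ls\|\cm_{\ch_{\fz}^{\dz}}f\|_{L^p_w(\rn, \ch_{\fz}^{\dz})}.
\]
Combining this with \eqref{eq101-2} yields \eqref{9e5}, provided Theorem \ref{tm101-1} may be applied. That requires the a priori finiteness $\cm_{\ch_{\fz}^{\dz}}f\in L^p_w(\rn, \ch_{\fz}^{\dz})$.

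\textbf{Main obstacle: finiteness of $\|\cm_{\ch_{\fz}^{\dz}}f\|_{L^p_w}$.} The hypothesis only gives $f\in L^p_w(\rn,\ch_{\fz}^{\dz})$. For $p>1$ I would obtain the needed finiteness from a weighted maximal bound, in three steps.
\begin{enumerate}
\item[{\rm(a)}] Use Theorem \ref{prop-0916-1} and Proposition \ref{prop-0901-1}(v) to place $w$ in some $\ca_{r,\dz}$. Then appeal to the weighted boundedness of $\cm_{\ch_{\fz}^{\dz}}$ from \cite{HZZ25} when $r\le p$.
\item[{\rm(b)}] When $r>p$, no such weighted bound is available. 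In that case I would truncate: replace $w$ by $w_N=\min\{w,N\}$ (Lemma \ref{lm-930-1}) and $f$ by bounded, compactly supported pieces $f_k$. For these, $\cm_{\ch_{\fz}^{\dz}}f_k\in L^p_{w_N}$ follows from Remark \ref{rem925-1}(v),(vi) and the decay of $\cm_{\ch_{\fz}^{\dz}}f_k$ at infinity.
\item[{\rm(c)}] Pass to the limit using monotone convergence, Remark \ref{12-4}(iv).
\end{enumerate}
The delicate point is that $\cm^\sharp_{\ch_{\fz}^{\dz}}f_k$ must remain controlled by $\cm^\sharp_{\ch_{\fz}^{\dz}}f$ plus an error that vanishes in the limit. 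Alternatively, if the corollary is read as implicitly assuming the hypothesis of Theorem \ref{tm101-1}, the argument reduces to the two-line deduction in the first step, and that is what I would write, noting the assumption explicitly.
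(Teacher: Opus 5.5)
Your fallback is exactly the paper's proof. The paper obtains \eqref{9e5} in one line from $|f|\ls\cm_{\ch_{\fz}^{\dz}}f$ $\ch_{\fz}^{\dz}$--a.e.\ (\cite[Corollary 4.4]{HZZ25}) together with the weighted Fefferman--Stein inequality \eqref{eq101-2}. It too uses the a priori condition $\cm_{\ch_{\fz}^{\dz}}f\in L^p_w(\rn,\ch_{\fz}^{\dz})$ without deriving it from $f\in L^p_w(\rn,\ch_{\fz}^{\dz})$, so stating that assumption explicitly is justified. The condition is supplied by \cite[Theorem 1.1]{HZZ25} in your case (a), namely $p>1$ and $w\in\ca_{p,\dz}$.

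Your truncation route (b) should not be offered as part of the proof, because, as you suspected, it does not close. A spatial cut-off only yields $\cm^{\sharp}_{\ch_{\fz}^{\dz}}(f\mathbf{1}_{B})\le 2\cm^{\sharp}_{\ch_{\fz}^{\dz}}f+2\cm_{\ch_{\fz}^{\dz}}(f\mathbf{1}_{\rn\setminus B})$. Controlling the last term in $L^p_w$ is essentially the weighted maximal bound that is missing when $w\notin\ca_{p,\dz}$, so the argument is circular.
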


In what follows, we say that an operator $T$ acts from $A$ to $B$ (where $A$ and $B$ are given sets) if $T$ is a map defined on $A$ and taking values in $B$. An operator $T$ acting from $A$ to $B$, both vector spaces of functions, is sublinear if
\[|T(f+g)|\leq |Tf|+|Tg|,\quad |T(\lambda f)|=|\lambda|\,|Tf|\]
for all $f,g\in A$ and $\lambda\in\mathbb R$ or $\mathbb C$.

\begin{thm}\label{tm102-1}
Let $1\leq p_0<q_0\leq \fz$. Let $\mathcal X$ and $\mathcal Y$ be vector spaces such that $\mathcal{X}\subset \mathcal{Y}$. Let $T$ and $S$ be operators such that $S$ acts from $\mathcal X$ into the space of functions, and $T$ is sublinear and acts from $\mathcal Y$ into $L^{p_0}(\rn, \ch_{\fz}^{\dz})$. Let $\{\mathcal B_{r}\}_{r>0}$ be a family of operators acting from $\mathcal X$ into $\mathcal Y$. Suppose there exists a positive constant $C$ such that for any $f\in\mathcal X$, any cube $Q\subset\rn$, and any $x\in Q$,
\begin{align}\label{eq102-3}
\lf[\fint_Q|T(I-\mathcal B_{l(Q)})f(y)|^{p_0}\,d\ch_{\fz}^{\dz}\r]^{\frac{1}{p_0}}\leq C\,\cm_{\ch_{\fz}^{\dz}}(|Sf|^{p_0})^{\frac{1}{p_0}}(x)
\end{align}
and
\begin{align}\label{eq102-4}
\lf[\fint_Q|T\mathcal B_{l(Q)}f(y)|^{q_0}\,d\ch_{\fz}^{\dz}\r]^{\frac{1}{q_0}}\leq C\,\cm_{\ch_{\fz}^{\dz}}(|Tf|^{p_0})^{\frac{1}{p_0}}(x),
\end{align}
where $l(Q)$ denotes the side length of $Q$, and $I$ is the identity operator. Then, whenever $p_0<p< q_0$ (or $p=q_0$ when $q_0<\fz$) and $w\in \ca_{p/p_0, \dz}\cap \rh_{(q_0/p)',\dz}$, there exists a positive constant $K$ such that for all $f\in \mathcal X$,
\begin{align}\label{eq102-5}
\|Tf\|_{L^p_{w}(\rn, \ch_{\fz}^{\dz})}\leq K\|Sf\|_{L^p_{w}(\rn, \ch_{\fz}^{\dz})}.
\end{align}
\end{thm}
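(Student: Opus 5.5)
We apply Theorem \ref{them0923-1} with the exponent rescaled by $p_0$, in the style of Auscher--Martell. Fix $f\in\mathcal X$ and set
\[
F:=|Tf|^{p_0},\qquad G:=C^{p_0}\,\cm_{\ch_{\fz}^{\dz}}(|Sf|^{p_0}),\qquad H_1=H_2\equiv 0 .
\]
For a cube $Q$, decompose $Tf=T(I-\mathcal B_{l(Q)})f+T\mathcal B_{l(Q)}f$. Sublinearity of $T$ gives, pointwise on $Q$,
\[
F\le 2^{p_0}\bigl(|T(I-\mathcal B_{l(Q)})f|^{p_0}+|T\mathcal B_{l(Q)}f|^{p_0}\bigr)=:G_Q+H_Q ,
\]
which is \eqref{eq923-1}. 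Then:
\begin{itemize}
\item \eqref{eq102-3} gives \eqref{eq923-4} for $G_Q$, after absorbing $2^{p_0}$ into $G$.
\item \eqref{eq102-4} gives \eqref{eq923-2} with exponent $\wz q:=q_0/p_0>1$ and $a\sim C^{p_0}2^{p_0}$. Indeed,
\[
\Bigl[\fint_Q H_Q^{\wz q}\,d\ch_{\fz}^{\dz}\Bigr]^{1/\wz q}\ls \Bigl[\fint_Q |T\mathcal B_{l(Q)}f|^{q_0}\,d\ch_{\fz}^{\dz}\Bigr]^{p_0/q_0}\ls \cm_{\ch_{\fz}^{\dz}}F(x).
\]
When $q_0=\fz$ we use the $L^\fz$ version and $\wz q=\fz$.
\end{itemize}

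Next we check the weight hypotheses. Put $\wz p:=p/p_0\in(1,\wz q)$ (or $\wz p=\wz q$) and take $s$ with $s'=(q_0/p)'$, that is, $s=q_0/p$ (so $s=\fz$ in the trivial endpoint bookkeeping when $q_0=\fz$; there any $w\in\ca_{\fz,\dz}$ works by Remark \ref{rem318-1}(ii)). Then $\wz q/s=p/p_0=\wz p$, so $\wz p$ is exactly the endpoint $\wz q/s$. Remark \ref{rem318-1}(iv), which uses the self-improvement of Theorem \ref{lm-0921-1}, covers this endpoint. Hence Theorem \ref{them0923-1} yields
\[
\|\cm_{\ch_{\fz}^{\dz}}F\|_{L^{\wz p}_w(\rn,\ch_{\fz}^{\dz})}\ls \|G\|_{L^{\wz p}_w(\rn,\ch_{\fz}^{\dz})},
\]
provided either the left side is finite or $\wz p\ge1$ and $F\in L^1(\rn,\ch_{\fz}^{\dz})$.

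To convert this into \eqref{eq102-5}, we argue in three steps.
\begin{enumerate}
\item Since $|Tf|^{p_0}\ls \cm_{\ch_{\fz}^{\dz}}(|Tf|^{p_0})$ $\ch_{\fz}^{\dz}$-a.e. by \cite[Corollary 4.4]{HZZ25}, we get $\|Tf\|^{p_0}_{L^p_w}\ls\|\cm_{\ch_{\fz}^{\dz}}F\|_{L^{\wz p}_w}$.
\item On the right side, $\|G\|_{L^{\wz p}_w}\ls\|\cm_{\ch_{\fz}^{\dz}}(|Sf|^{p_0})\|_{L^{\wz p}_w}$.
\item We need the weighted boundedness of $\cm_{\ch_{\fz}^{\dz}}$ on $L^{\wz p}_w(\rn,\ch_{\fz}^{\dz})$ for $w\in\ca_{\wz p,\dz}$, which is the weighted maximal theorem of \cite{HZZ25}. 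It gives $\ls\||Sf|^{p_0}\|_{L^{\wz p}_w}=\|Sf\|^{p_0}_{L^p_w}$. At the endpoint $p=q_0$ we still need $w\in\ca_{\wz p,\dz}$, which is assumed.
\end{enumerate}
Taking $p_0$-th roots gives \eqref{eq102-5}.

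The main obstacle is the finiteness hypothesis. Since $T$ maps into $L^{p_0}(\rn,\ch_{\fz}^{\dz})$, we have $F\in L^1(\rn,\ch_{\fz}^{\dz})$, and $\wz p\ge1$, so the final clause of Theorem \ref{them0923-1} applies directly. This is precisely why $T$ is assumed to land in $L^{p_0}$.

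The delicate point is the endpoint $p=q_0$, where we invoke Remark \ref{rem318-1}(iv). This requires $w\in\rh_{s',\dz}$ with $s'=(q_0/p)'$; if $s=1$ (i.e.\ $p=q_0$), the condition degenerates to $\rh_{1,\dz}=\ca_{\fz,\dz}$ via Theorem \ref{prop-0916-1}, and then self-improvement fails to give room. I would handle that case by using Lemma \ref{lm0922-1} to rewrite $\ca_{\wz p,\dz}\cap\rh_{s',\dz}$ as an $\ca$ condition on $w^{s'}$. I would then apply the openness of Proposition \ref{prop-0901-1}(v),(vi) to gain an $\varepsilon$ in $s'$, which moves $\wz p$ strictly inside $(0,\wz q/s_0)$.
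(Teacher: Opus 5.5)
On the range $p_0<p<q_0$, and for $q_0=\fz$ via Remark \ref{rem318-1}(ii), your argument is the paper's. You use the same choice $F=|Tf|^{p_0}$, $G\simeq\cm_{\ch_\fz^\dz}(|Sf|^{p_0})$, $H_1=H_2\equiv0$, and the same splitting through $\mathcal B_{l(Q)}$ with exponent $q_0/p_0$ in \eqref{eq923-2}. You reach the endpoint $\wz p=\wz q/s$ by self-improvement; the paper applies Theorem \ref{lm-0921-1} directly to get $w\in\rh_{(s/p)',\dz}$ for some $s\in(p,q_0)$, which is exactly Remark \ref{rem318-1}(iv). You then finish, as the paper does, with the pointwise bound $|g|\ls\cm_{\ch_\fz^\dz}g$ and the weighted boundedness of $\cm_{\ch_\fz^\dz}$ for $\ca_{p/p_0,\dz}$ weights. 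Your justification of the finiteness hypothesis is actually more to the point than the paper's: $T$ maps into $L^{p_0}(\rn,\ch_\fz^\dz)$, so $F\in L^1(\rn,\ch_\fz^\dz)$, and $p/p_0>1$. The paper instead restricts to $f$ with $F\in L^1_w(\rn,\ch_\fz^\dz)$, which is not the hypothesis in the last clause of Theorem \ref{them0923-1}.

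The endpoint $p=q_0<\fz$ is where your proposal breaks.
\begin{itemize}
\item There $s=q_0/p=1$, so $s'=\fz$ and the hypothesis is $w\in\rh_{\fz,\dz}$, the strongest reverse H\"older condition. It is not $\rh_{1,\dz}=\ca_{\fz,\dz}$, as you wrote.
\item No improvement of the weight can reach this case. Theorem \ref{them0923-1} requires $s\in[1,\fz)$, so it only ever gives $\wz p<\wz q/s\le\wz q=q_0/p_0$.
\item At $\wz p=\wz q$, $s=1$, the factor $K^{\wz p}\bigl(a^{\wz q}K^{-\wz q}+\gamma K^{-1}\bigr)^{1/s}$ must be absorbed in Step 2 of that proof. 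It equals $a^{\wz q}+\gamma K^{\wz q-1}\ge1$ for every $K$ and $\gamma$, so it cannot be made small.
\item Your repair would need some $s_0<1$, which does not exist. Lemma \ref{lm0922-1} needs a finite reverse H\"older exponent, so it does not apply to $\rh_{\fz,\dz}$. Proposition \ref{prop-0901-1}(v),(vi) only improves $\ca_p$-type exponents, which cannot push $s$ below $1$.
\end{itemize}
The paper's own proof does not cover this endpoint either, since its choice $s\in(p,q_0)$ is empty when $p=q_0$. So by this method \eqref{eq102-5} is established only for $p_0<p<q_0$ (and all $p>p_0$ when $q_0=\fz$). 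The case $p=q_0$ would need a genuinely different idea, and you should not claim it from Theorem \ref{them0923-1}.
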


\begin{proof}
We first prove \eqref{eq102-5} in the case $q_0<\fz$ and $p_0<p\leq q_0$. Let $f\in \mathcal X$ such that $F:=|Tf|^{p_0}\in L^1_w(\rn, \ch_{\fz}^{\dz})$. For any $x\in\rn$, define
\[
G(x):=2^{p_0-1}C^{p_0}\cm_{\ch_{\fz}^{\dz}}(|Sf|^{p_0})(x)\quad\text{and}\quad H_1\equiv H_2\equiv0.
\]
Then for any cube $Q\subset\rn$, the non-negative functions
\[
G_Q(x):=2^{p_0-1}\bigl|T(I-\mathcal{B}_{l(Q)})f\bigr|^{p_0}(x)\mathbf{1}_Q(x)\quad\text{and}\quad
H_Q(x):=2^{p_0-1}\bigl|T\mathcal{B}_{l(Q)}f\bigr|^{p_0}(x)\mathbf{1}_Q(x)
\]
satisfy:
\begin{enumerate}
\item[{\rm(i)}]
for any $x\in Q$, by sublinearity of $T$,
\begin{align*}
F(x)\leq 2^{p_0-1}\bigl|T(I-\mathcal{B}_{l(Q)})f\bigr|^{p_0}(x)+ 2^{p_0-1}\bigl|T\mathcal{B}_{l(Q)}f\bigr|^{p_0}(x)= G_Q(x)+ H_Q(x);
\end{align*}
\item[{\rm(ii)}]
by \eqref{eq102-4}, for any $x,\bar{x}\in Q$,
\begin{align*}
\lf[\fint_{Q}|H_Q(y)|^{\frac{q_0}{p_0}}\,d\ch_{\fz}^{\dz}\r]^{\frac{p_0}{q_0}}
&\leq
2^{p_0-1}C^{p_0}\cm_{\ch_{\fz}^{\dz}}(|Tf|^{p_0})(x)\\
&\leq 2^{p_0-1}C^{p_0}\lf[\cm_{\ch_{\fz}^{\dz}}F(x)+\cm_{\ch_{\fz}^{\dz}}H_1(x)+H_2(\bar{x})\r];
\end{align*}
\item[{\rm(iii)}]
by \eqref{eq102-3}, for any $x\in Q$, $
\fint_{Q}G_Q(y)\,d\ch_{\fz}^{\dz}
\leq G(x).$
\end{enumerate}
Thus, the conditions \eqref{eq923-1}, \eqref{eq923-2}, and \eqref{eq923-4} in Theorem \ref{them0923-1} hold.
Furthermore, if $q_0<\fz$, by Theorem \ref{lm-0921-1}, there exists $s\in (p, q_0)$ such that $w\in \ca_{p/p_0, \dz}\cap \rh_{(s/p)',\dz}$. Note that $\frac{p}{p_0}\in (0, \frac{pq_0}{sp_0})$. Therefore, by \cite[Corollary 4.4]{HZZ25}, \eqref{eq926-1} of Theorem \ref{them0923-1}, and \cite[Theorem 1.1]{HZZ25},
\begin{align*}
\|Tf\|^{p_0}_{L^p_{w}(\rn, \ch_{\fz}^{\dz})}&\ls \|\cm_{\ch_{\fz}^{\dz}}F\|_{L^{\frac{p}{p_0}}_{w}(\rn, \ch_{\fz}^{\dz})}\ls \|G\|_{L^{\frac{p}{p_0}}_{w}(\rn, \ch_{\fz}^{\dz})}\ls \|Sf\|^{p_0}_{L^p_{w}(\rn, \ch_{\fz}^{\dz})},
\end{align*}
which completes the proof of Theorem \ref{tm102-1} in this case.

If $q_0=\fz$, then $w\in \ca_{p/p_0, \dz}\cap \rh_{1,\dz}$, and there exists $s\in(1, \fz)$ such that $w\in \ca_{p/p_0, \dz}\cap \rh_{s',\dz}$. A similar argument shows that \eqref{eq102-5} holds for $q_0=\fz$, and hecce Theorem \ref{tm102-1} is proved.
\end{proof}

The following is an extension of Theorem \ref{tm102-1}. for simplicity we assume that $S=I$.

\begin{thm}\label{tm103-1}
Let $1\leq p_0<q_0\leq \fz$. Let $\mathcal X$, $\mathcal Y$, $T$, and $\{\mathcal B_{r}\}_{r>0}$ be as in Theorem \ref{tm102-1}. Assume that \eqref{eq102-3} holds with $S=I$, and, instead of \eqref{eq102-4},  that
\begin{align}\label{eq103-2}
\lf[\fint_Q|T\mathcal B_{l(Q)}f(y)|^{q_0}\,d\ch_{\fz}^{\dz}\r]^{\frac{1}{q_0}}\leq C\lf[\cm_{\ch_{\fz}^{\dz}}(|Tf|^{p_0})^{\frac{1}{p_0}}(x)+\cm_{\ch_{\fz}^{\dz}}(|S_1f|^{p_0})^{\frac{1}{p_0}}(x)+|S_2f(\bar x)|\r]
\end{align}
holds for all $f\in\mathcal X$ and all $x,\bar x\in Q$, where $S_1$ and $S_2$ are given operators. Let $p_0<p<q_0$ and $w\in \ca_{p/p_0, \dz}\cap \rh_{(q_0/p)',\dz}$. If $S_1$ and $S_2$ are bounded on $L^p_w(\rn,\ch_{\fz}^{\dz})$, then there exists a positive constant $C$ such that, for all $f\in\mathcal X\cap L^p_w(\rn, \ch_{\fz}^{\dz})$,
\[
\|Tf\|_{L^p_{w}(\rn, \ch_{\fz}^{\dz})}\leq C\|f\|_{L^p_{w}(\rn, \ch_{\fz}^{\dz})}.
\]

\end{thm}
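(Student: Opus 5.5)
We follow the proof of Theorem \ref{tm102-1} and apply Theorem \ref{them0923-1} with $H_1,H_2$ no longer zero. Fix $f\in\mathcal X\cap L^p_w(\rn,\ch_\fz^\dz)$ and put $F:=|Tf|^{p_0}$. Define
\[
G:=2^{p_0-1}C^{p_0}\cm_{\ch_\fz^\dz}(|f|^{p_0}),\qquad H_1:=|S_1f|^{p_0},\qquad H_2:=|S_2f|^{p_0}.
\]
For each cube $Q$ let
\[
G_Q:=2^{p_0-1}|T(I-\mathcal B_{l(Q)})f|^{p_0}\mathbf 1_Q,\qquad H_Q:=2^{p_0-1}|T\mathcal B_{l(Q)}f|^{p_0}\mathbf 1_Q.
\]
Sublinearity of $T$ gives \eqref{eq923-1}, and \eqref{eq102-3} with $S=I$ gives \eqref{eq923-4}. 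For \eqref{eq923-2} with exponent $q:=q_0/p_0$, raise \eqref{eq103-2} to the power $p_0$ and use $(a+b+c)^{p_0}\le 3^{p_0}(a^{p_0}+b^{p_0}+c^{p_0})$. This yields
\[
\Big[\fint_Q H_Q^{q}\,d\ch_\fz^\dz\Big]^{1/q}\le a\big[\cm_{\ch_\fz^\dz}F(x)+\cm_{\ch_\fz^\dz}H_1(x)+H_2(\bar x)\big],
\]
with $a:=6^{p_0}C^{p_0}$. When $q_0=\fz$ the left side is read as an $L^\fz$ norm, as in Remark \ref{rem318-1}(ii).

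Next we choose the weight parameters. Since $p<q_0$, Theorem \ref{lm-0921-1} (or the case $q_0=\fz$, handled as at the end of Theorem \ref{tm102-1}) gives $s_0\in(p,q_0)$ with $w\in\rh_{(s_0/p)',\dz}$. Set $s:=s_0/p$. Then $\frac{p}{p_0}<\frac{q_0}{p_0 s}$ because $s_0<q_0$. Applying \eqref{eq926-1} with exponent $p/p_0$ gives
\[
\|\cm_{\ch_\fz^\dz}F\|_{L^{p/p_0}_w}\ls \|G\|_{L^{p/p_0}_w}+\|\cm_{\ch_\fz^\dz}H_1\|_{L^{p/p_0}_w}+\|H_2\|_{L^{p/p_0}_w}.
\]
Each term on the right is then reduced to $\|f\|^{p_0}_{L^p_w}$:
\begin{itemize}
\item By \cite[Theorem 1.1]{HZZ25} with $w\in\ca_{p/p_0,\dz}$ (note $p/p_0>1$), $\cm_{\ch_\fz^\dz}$ is bounded on $L^{p/p_0}_w$. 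Hence $\|G\|_{L^{p/p_0}_w}\ls\||f|^{p_0}\|_{L^{p/p_0}_w}=\|f\|^{p_0}_{L^p_w}$.
\item Likewise $\|\cm_{\ch_\fz^\dz}H_1\|_{L^{p/p_0}_w}\ls\|S_1f\|^{p_0}_{L^p_w}\ls\|f\|^{p_0}_{L^p_w}$, using the assumed boundedness of $S_1$.
\item Finally $\|H_2\|_{L^{p/p_0}_w}=\|S_2f\|^{p_0}_{L^p_w}\ls\|f\|^{p_0}_{L^p_w}$, using the boundedness of $S_2$.
\end{itemize}
The pointwise bound $|Tf|^{p_0}\ls\cm_{\ch_\fz^\dz}F$ $\ch_\fz^\dz$-a.e. from \cite[Corollary 4.4]{HZZ25} then yields $\|Tf\|_{L^p_w}\ls\|f\|_{L^p_w}$.

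The main obstacle is the a priori finiteness hypothesis $\|\cm_{\ch_\fz^\dz}F\|_{L^{p/p_0}_w}<\fz$ required in \eqref{eq926-1}. Unlike Theorem \ref{tm102-1}, this is not assumed here, since $f$ is only assumed to lie in $\mathcal X\cap L^p_w$. To remove it we invoke the last assertion of Theorem \ref{them0923-1}: because $p/p_0>1$, \eqref{eq926-1} holds whenever $F\in L^1(\rn,\ch_\fz^\dz)$. It therefore suffices to have $Tf\in L^{p_0}(\rn,\ch_\fz^\dz)$, which holds since $T$ maps $\mathcal Y\supset\mathcal X$ into $L^{p_0}(\rn,\ch_\fz^\dz)$. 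Behind this, Step 3 of the proof of Theorem \ref{them0923-1} supplies the bounded-weight truncation $w_N=\min\{w,N\}$ via Lemma \ref{lm-930-1}, with constants uniform in $N$. One point needs care: the reduction of the $G$ and $H_1$ terms uses $w_N\in\ca_{p/p_0,\dz}$ uniformly in $N$. If that uniformity fails, we instead apply the truncation argument only to the left side, i.e. to the level sets of $\cm_{\ch_\fz^\dz}F$, and keep $w$ itself on the right-hand side.
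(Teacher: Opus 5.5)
Your proposal is correct and follows the paper's proof essentially step for step. It uses the same $F=|Tf|^{p_0}$, $G$, $H_1=|S_1f|^{p_0}$, $H_2=|S_2f|^{p_0}$, $G_Q$, $H_Q$, and the same self-improvement of $\rh_{(q_0/p)',\dz}$ to get $p/p_0<q/s$, followed by \eqref{eq926-1}, the weighted boundedness of $\cm_{\ch_\fz^\dz}$, and the boundedness of $S_1,S_2$. Your appeal to the last assertion of Theorem \ref{them0923-1} (via $F\in L^1(\rn,\ch_\fz^\dz)$ and $p/p_0>1$) correctly supplies the a priori finiteness that the paper leaves implicit. The closing worry about uniform $\ca_{p/p_0,\dz}$ bounds for $w_N$ is unnecessary: that assertion already yields \eqref{eq926-1} with $w$ itself on the right-hand side, so the maximal estimate is only ever applied with $w$.
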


\begin{proof}
The proof is similar to that of Theorem \ref{tm102-1}. Let $f\in\mathcal X\cap L^p_w(\rn, \ch_{\fz}^{\dz})$ and set
\[
F:=|Tf|^{p_0},\quad H_1:=|S_1f|^{p_0},\quad H_2:=|S_2f|^{p_0}.
\]
Then the conditions \eqref{eq923-1}, \eqref{eq923-2}, and \eqref{eq923-4} in Theorem \ref{them0923-1} hold.
By \cite[Corollary 4.4]{HZZ25}, Remark \ref{12-4}(ii), \eqref{eq926-1} of Theorem \ref{them0923-1}, \cite[Theorem 1.1]{HZZ25}, and the boundedness of $S_1$ and $S_2$ on $L^p_w(\rn,\ch_{\fz}^{\dz})$,
we conclude that
\begin{align*}
\|Tf\|^{p_0}_{L^p_{w}(\rn, \ch_{\fz}^{\dz})}
&\ls \|\cm_{\ch_{\fz}^{\dz}}F\|_{L^{\frac{p}{p_0}}_{w}(\rn, \ch_{\fz}^{\dz})}\\
&\ls\|\cm_{\ch_{\fz}^{\dz}}(|f|^{p_0})\|_{L^{\frac{p}{p_0}}_{w}(\rn, \ch_{\fz}^{\dz})}+\|\cm_{\ch_{\fz}^{\dz}}(|S_1f|^{p_0})\|_{L^{\frac{p}{p_0}}_{w}(\rn, \ch_{\fz}^{\dz})}+\bigl\||S_2f|^{p_0}\bigr\|_{L^{\frac{p}{p_0}}_{w}(\rn, \ch_{\fz}^{\dz})}\\
&\ls \|f\|^{p_0}_{L^{p}_{w}(\rn, \ch_{\fz}^{\dz})},
\end{align*}
which completes the proof of Theorem \ref{tm103-1}.
\end{proof}

The last application of the main theorem gives an extension of Shen \cite[Theorem 3.1]{ZS05}.

\begin{thm}\label{tm103-2}
Let $1\leq p_0<q_0\leq \fz$ and $T$ be a bounded sublinear operator on $L^{p_0}(\rn, \ch_{\fz}^{\dz})$. Suppose there exist constants $\alpha_2>\alpha_1>1$ and $C>0$ such that for any cube $Q$, any $x\in Q$, and any $f\in L^{\fz}(\rn, \ch_{\fz}^{\dz})$ with compact support in $\rn\setminus\alpha_2 Q$,
\begin{align}\label{eq104-1}
\Bigl(\fint_Q|Tf(y)|^{q_0}\,d\ch_{\fz}^{\dz}\Bigr)^{\frac{1}{q_0}}\leq C\biggl\{\Bigl(\fint_{\alpha_1Q}|Tf(y)|^{p_0}\,d\ch_{\fz}^{\dz}\Bigr)^{\frac{1}{p_0}}+\cm_{\ch_{\fz}^{\dz}}(|f|^{p_0})^{\frac{1}{p_0}}(x)\biggr\}.
\end{align}
Then, whenever $p_0<p<q_0$ and $w\in \ca_{p/p_0, \dz}\cap \rh_{(q_0/p)',\dz}$, there exists a positive constant $C$ such that for all $f\in L^{\fz}(\rn, \ch_{\fz}^{\dz})$ with compact support,
\[
\|Tf\|_{L^p_{w}(\rn, \ch_{\fz}^{\dz})}\leq C\|f\|_{L^p_{w}(\rn, \ch_{\fz}^{\dz})}.
\]
\end{thm}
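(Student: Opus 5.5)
We plan to deduce Theorem \ref{tm103-2} from Theorem \ref{them0923-1}, following the classical Shen-type argument. Write $\mathcal B_{l(Q)}$ for localization. Fix $f\in L^{\fz}(\rn,\ch_{\fz}^{\dz})$ with compact support and set $F:=|Tf|^{p_0}$. For every cube $Q$ split $f=f\mathbf 1_{\alpha_2Q}+f\mathbf 1_{\rn\setminus\alpha_2Q}=:f_1+f_2$. By sublinearity of $T$,
\[
F\le 2^{p_0-1}|Tf_1|^{p_0}+2^{p_0-1}|Tf_2|^{p_0}=:G_Q+H_Q \quad\text{on } Q,
\]
which gives \eqref{eq923-1}.

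\textbf{Verifying \eqref{eq923-4}.} We will use the boundedness of $T$ on $L^{p_0}(\rn,\ch_{\fz}^{\dz})$ together with $\ch_{\fz}^{\dz}(\alpha_2Q)\sim\ch_{\fz}^{\dz}(Q)$. This gives
\[
\fint_Q G_Q\,d\ch_{\fz}^{\dz}\ls \frac{1}{\ch_{\fz}^{\dz}(Q)}\int_{\alpha_2Q}|f|^{p_0}\,d\ch_{\fz}^{\dz}\ls \cm_{\ch_{\fz}^{\dz}}(|f|^{p_0})(x)
\]
for every $x\in Q$. We therefore set $G:=c\,\cm_{\ch_{\fz}^{\dz}}(|f|^{p_0})$.

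\textbf{Verifying \eqref{eq923-2} with exponent $q:=q_0/p_0$.} Hypothesis \eqref{eq104-1} applies to $f_2$, since $f_2$ is bounded and supported away from $\alpha_2Q$. (Compact support of $f_2$ holds because $f$ has compact support; the boundary issue of the left-open cube can be handled by slightly enlarging.) It yields
\[
\Bigl(\fint_Q H_Q^{q}\,d\ch_{\fz}^{\dz}\Bigr)^{1/q}\ls \fint_{\alpha_1Q}|Tf_2|^{p_0}\,d\ch_{\fz}^{\dz}+\cm_{\ch_{\fz}^{\dz}}(|f|^{p_0})(x).
\]
We then bound $|Tf_2|^{p_0}\ls |Tf|^{p_0}+|Tf_1|^{p_0}$ and use Remark \ref{rem-11112-1}(iii). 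The term $\fint_{\alpha_1Q}|Tf|^{p_0}\,d\ch_{\fz}^{\dz}$ is at most $\cm_{\ch_{\fz}^{\dz}}F(x)$, since $x\in Q\subset\alpha_1Q$. The term $\fint_{\alpha_1Q}|Tf_1|^{p_0}\,d\ch_{\fz}^{\dz}$ is controlled, again by $L^{p_0}$-boundedness, by $\cm_{\ch_{\fz}^{\dz}}(|f|^{p_0})(x)$. Thus \eqref{eq923-2} holds with $H_1:=|f|^{p_0}$ (so that $\cm_{\ch_{\fz}^{\dz}}H_1$ appears) and $H_2\equiv0$, with some $a$.

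\textbf{Applying Theorem \ref{them0923-1}.} By Theorem \ref{lm-0921-1}, the class $\rh_{(q_0/p)',\dz}$ self-improves, so $w\in\rh_{s',\dz}$ with $s<q_0/p$, i.e.\ $p/p_0<q/s$. We apply \eqref{eq926-1} with exponent $p/p_0$. The \emph{a priori} finiteness needed there is the step I expect to be the main obstacle. If $p/p_0\ge1$ it suffices to know $F\in L^1(\rn,\ch_{\fz}^{\dz})$, which is the last clause of Theorem \ref{them0923-1}. This requires $|Tf|^{p_0}\in L^1$, which follows from $f\in L^{\fz}$ with compact support and $T$ bounded on $L^{p_0}$; note $p/p_0>1$ here. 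With this, \eqref{eq926-1} gives
\[
\|\cm_{\ch_{\fz}^{\dz}}F\|_{L^{p/p_0}_w}\ls \|\cm_{\ch_{\fz}^{\dz}}(|f|^{p_0})\|_{L^{p/p_0}_w}.
\]

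\textbf{Concluding.} By \cite[Corollary 4.4]{HZZ25} we have $F\ls\cm_{\ch_{\fz}^{\dz}}F$ $\ch_{\fz}^{\dz}$-a.e., so $\|Tf\|^{p_0}_{L^p_w}\ls\|\cm_{\ch_{\fz}^{\dz}}F\|_{L^{p/p_0}_w}$. Since $w\in\ca_{p/p_0,\dz}$ and $p/p_0>1$, the weighted boundedness of $\cm_{\ch_{\fz}^{\dz}}$ in \cite[Theorem 1.1]{HZZ25} gives
\[
\|\cm_{\ch_{\fz}^{\dz}}(|f|^{p_0})\|_{L^{p/p_0}_w}\ls\|f\|^{p_0}_{L^p_w}.
\]
Combining these yields $\|Tf\|_{L^p_w(\rn,\ch_{\fz}^{\dz})}\ls\|f\|_{L^p_w(\rn,\ch_{\fz}^{\dz})}$, as in the proofs of Theorems \ref{tm102-1} and \ref{tm103-1}.
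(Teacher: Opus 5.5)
Your proposal is correct and follows essentially the paper's route. The paper sets $\mathcal B_{l(Q)}f:=(1-\mathbf 1_{\alpha_2Q})f$ and verifies \eqref{eq102-3} and \eqref{eq103-2} with $S=S_1=I$, $S_2=0$ by the same $L^{p_0}$-boundedness and sublinearity estimates you use, then invokes Theorem \ref{tm103-1}, which is just Theorem \ref{them0923-1} with your choices $F=|Tf|^{p_0}$, $H_1=|f|^{p_0}$, $H_2\equiv0$, $q=q_0/p_0$ together with self-improvement; your explicit handling of the a priori finiteness (via the $F\in L^1(\rn,\ch_{\fz}^{\dz})$ clause) and of the support of $f\mathbf 1_{\rn\setminus\alpha_2Q}$ (by enlarging $\alpha_2$ slightly) fills in details the paper leaves implicit, and for $q_0=\fz$ you only need the definition of $\rh_{1,\dz}$ rather than Theorem \ref{lm-0921-1}.
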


\begin{proof}
For any cube $Q\subset\rn$, define $\mathcal B_{l(Q)}f:=(1-\mathbf{1}_{\alpha_2Q})f$. Fix $f\in L^{\fz}(\rn, \ch_{\fz}^{\dz})$ with compact support, a cube $Q$, and $x\in Q$. Using the $L^{p_0}(\rn, \ch_{\fz}^{\dz})$ boundedness of $T$, we get
\begin{align}\label{eq104-2}
\Bigl(\fint_{\alpha_1Q}\bigl|T(I-\mathcal B_{l(Q)})f(y)\bigr|^{p_0}\,d\ch_{\fz}^{\dz}\Bigr)^{\frac{1}{p_0}}
&\ls \Bigl(\fint_{\alpha_2Q}\bigl|T(I-\mathcal B_{l(Q)})f(y)\bigr|^{p_0}\,d\ch_{\fz}^{\dz}\Bigr)^{\frac{1}{p_0}}\\
&\ls\cm_{\ch_{\fz}^{\dz}}(|f|^{p_0})^{\frac{1}{p_0}}(x)\noz.
\end{align}
In particular, \eqref{eq102-3} holds since $\alpha_1>1$.
In addition, it follows from \eqref{eq104-1}, sublinearity of $T$, and \eqref{eq104-2} that
\begin{align*}
&\Bigl(\fint_{Q}\bigl|T\mathcal B_{l(Q)}f(y)\bigr|^{q_0}\,d\ch_{\fz}^{\dz}\Bigr)^{\frac{1}{q_0}}\\
&\quad\ls\Bigl(\fint_{\alpha_1Q}\bigl|T\mathcal B_{l(Q)}f(y)\bigr|^{p_0}\,d\ch_{\fz}^{\dz}\Bigr)^{\frac{1}{p_0}}+\cm_{\ch_{\fz}^{\dz}}\bigl(|\mathcal B_{l(Q)}f|^{p_0}\bigr)^{\frac{1}{p_0}}(x)\\
&\quad\ls \Bigl(\fint_{\alpha_1Q}\bigl(|Tf(y)|^{p_0}+\bigl|T(I-\mathcal B_{l(Q)})f(y)\bigr|^{p_0}\bigr)\,d\ch_{\fz}^{\dz}\Bigr)^{\frac{1}{p_0}}+\cm_{\ch_{\fz}^{\dz}}(|f|^{p_0})^{\frac{1}{p_0}}(x)\\
&\quad\ls \cm_{\ch_{\fz}^{\dz}}(|Tf|^{p_0})^{\frac{1}{p_0}}(x)+\cm_{\ch_{\fz}^{\dz}}(|f|^{p_0})^{\frac{1}{p_0}}(x),
\end{align*}
which is \eqref{eq103-2} with $S_1=I$ and $S_2=0$. The conclusion follows by applying Theorem \ref{tm103-1} with $\mathcal X:=L^{\fz}(\rn, \ch_{\fz}^{\dz})$ with compact support and $\mathcal Y:=L^{p_0}(\rn, \ch_{\fz}^{\dz})$.
This completes the proof of Theorem \ref{tm103-2}.
\end{proof}
\medskip

\noindent\textbf{Acknowledgements}

This project is supported by the National Natural
Science Foundations of China (12201139), and the Natural Science Foundation of Hunan province (2024JJ3023).




\medskip

\noindent Long Huang
\medskip

\noindent School of Mathematics and Information Science,
Guangzhou University, Guangzhou, 510006, China

\smallskip

\noindent {\it E-mail}: \texttt{longhuang@gzhu.edu.cn}

\medskip
\medskip

\noindent Yangzhi Zhang (Corresponding author)
\medskip

\noindent School of Mathematics and Statistics, Central South University, Changsha, 410083, Hunan, China
\smallskip

\noindent {\it E-mail}:
\texttt{yzzhang@csu.edu.cn/ yzzhang@hunnu.edu.cn}

\medskip
\medskip

\noindent Ciqiang Zhuo
\smallskip
	
\noindent MOE-LCSM, School of Mathematics and Statistics,
Hunan Normal University,
Changsha, Hunan 410081, China
	
\smallskip

\noindent{\it E-mail:}
\texttt{cqzhuo87@hunnu.edu.cn}

\end{document}